\definecolor{cof}{RGB}{219,144,71}
\definecolor{pur}{RGB}{186,146,162}
\definecolor{greeo}{RGB}{91,173,69}
\definecolor{greet}{RGB}{52,111,72}
\begin{document}

\newtheorem{theorem}{Theorem}[section]
\newtheorem{corollary}[theorem]{Corollary}
\newtheorem{proposition}[theorem]{Proposition}
\newtheorem{lemma}[theorem]{Lemma}

\theoremstyle{definition}
\newtheorem{remark}[theorem]{Remark}
\newtheorem{definition}[theorem]{Definition}
\newtheorem{example}[theorem]{Example}
\newtheorem{conjecture}[theorem]{Conjecture}

\newcommand{\FFock}{\mathcal{F}}
\newcommand{\kil}{\mathsf{k}}
\newcommand{\Hil}{\mathsf{H}}
\newcommand{\hil}{\mathsf{h}}
\newcommand{\Kil}{\mathsf{K}}
\newcommand{\Real}{\mathbb{R}}
\newcommand{\Rplus}{\Real_+}

%      Blackboard bold letters
\newcommand{\bC}{{\mathbb{C}}}
\newcommand{\bD}{{\mathbb{D}}}
\newcommand{\bN}{{\mathbb{N}}}
\newcommand{\bQ}{{\mathbb{Q}}}
\newcommand{\bR}{{\mathbb{R}}}
\newcommand{\bT}{{\mathbb{T}}}
\newcommand{\bX}{{\mathbb{X}}}
\newcommand{\bZ}{{\mathbb{Z}}}
\newcommand{\bH}{{\mathbb{H}}}
%      Useful shortforms
\newcommand{\BH}{{\B(\H)}}
\newcommand{\bsl}{\setminus}
\newcommand{\ca}{\mathrm{C}^*}
\newcommand{\cstar}{\mathrm{C}^*}
\newcommand{\cenv}{\mathrm{C}^*_{\text{env}}}
\newcommand{\rip}{\rangle}
\newcommand{\ol}{\overline}
\newcommand{\td}{\widetilde}
\newcommand{\wh}{\widehat}
\newcommand{\sot}{\textsc{sot}}
\newcommand{\wot}{\textsc{wot}}
\newcommand{\wotclos}[1]{\ol{#1}^{\textsc{wot}}}
%      Capital script letters
 \newcommand{\A}{{\mathcal{A}}}
 \newcommand{\B}{{\mathcal{B}}}
 \newcommand{\C}{{\mathcal{C}}}
 \newcommand{\D}{{\mathcal{D}}}
 \newcommand{\E}{{\mathcal{E}}}
 \newcommand{\F}{{\mathcal{F}}}
 \newcommand{\G}{{\mathcal{G}}}
\renewcommand{\H}{{\mathcal{H}}}
 \newcommand{\I}{{\mathcal{I}}}
 \newcommand{\J}{{\mathcal{J}}}
 \newcommand{\K}{{\mathcal{K}}}
\renewcommand{\L}{{\mathcal{L}}}
 \newcommand{\M}{{\mathcal{M}}}
 \newcommand{\N}{{\mathcal{N}}}
\renewcommand{\O}{{\mathcal{O}}}
\renewcommand{\P}{{\mathcal{P}}}
 \newcommand{\Q}{{\mathcal{Q}}}
 \newcommand{\R}{{\mathcal{R}}}
\renewcommand{\S}{{\mathcal{S}}}
 \newcommand{\T}{{\mathcal{T}}}
 \newcommand{\U}{{\mathcal{U}}}
 \newcommand{\V}{{\mathcal{V}}}
 \newcommand{\W}{{\mathcal{W}}}
 \newcommand{\X}{{\mathcal{X}}}
 \newcommand{\Y}{{\mathcal{Y}}}
 \newcommand{\Z}{{\mathcal{Z}}}

\newcommand{\supp}{\operatorname{supp}}
\newcommand{\conv}{\operatorname{conv}}
\newcommand{\cone}{\operatorname{cone}}
\newcommand{\vspan}{\operatorname{span}}
\newcommand{\proj}{\operatorname{proj}}
\newcommand{\sgn}{\operatorname{sgn}}
\newcommand{\rank}{\operatorname{rank}}
\newcommand{\Isom}{\operatorname{Isom}}
\newcommand{\qIsom}{\operatorname{q-Isom}}
\newcommand{\Cknet}{{\mathcal{C}_{\text{knet}}}}
\newcommand{\Ckag}{{\mathcal{C}_{\text{kag}}}}
\newcommand{\rind}{\operatorname{r-ind}}
\newcommand{\lind}{\operatorname{r-ind}}
\newcommand{\ind}{\operatorname{ind}}
\newcommand{\coker}{\operatorname{coker}}
\newcommand{\ran}{\operatorname{ran}}
\newcommand{\Aut}{\operatorname{Aut}}
\newcommand{\Hom}{\operatorname{Hom}}
\newcommand{\GL}{\operatorname{GL}}
\newcommand{\tr}{\operatorname{tr}}

\newcommand\blue{\color{blue}}
\newcommand\red{\color{red}}
\definecolor{dark_purple}{rgb}{0.4, 0.0, 0.4}
\definecolor{dark_green}{rgb}{0.0, 0.7, 0.0}
\newcommand\green{\color{dark_green}}
\newcommand\black{\color{black}}

\newcommand{\eqnwithbr}[2]{%
\refstepcounter{equation}
\begin{trivlist}
\item[]#1 \hfill $\displaystyle #2$ \hfill (\theequation)
\end{trivlist}}

\tikzset{->-/.style={decoration={
  markings,
  mark=at position .53 with {\arrow{>}}},postaction={decorate}}}

\setcounter{tocdepth}{1}

\title[Symbol functions for symmetric frameworks]{Symbol functions for symmetric frameworks}

\author[E. Kastis]{Eleftherios Kastis}
\email{l.kastis@lancaster.ac.uk}
\address{Dept.\ Math.\ Stats.\\ Lancaster University\\
Lancaster, LA1 4YF \\U.K. }

\author[D. Kitson]{Derek Kitson}
\email{derek.kitson@mic.ul.ie}
%\address{Dept.\ Math.\ Stats.\\ Lancaster University\\Lancaster, LA1 4YF \\U.K. }
\address{Dept.\ Math.\ Comp. St.\\Mary Immaculate College, Thurles, Co.~Tipperary, Ireland.}

\author[J.E. McCarthy]{John E. M\raise.5ex\hbox{c}Carthy}
\email{mccarthy@wustl.edu}
\address{Dept.\ Math.\ Stats.\\ Washington University \\
St Louis, MO 63130\\U.S.A. }

\thanks{E.K. and D.K. supported by the Engineering and Physical Sciences Research Council [grant number EP/S00940X/1].
J.E.M. partially supported by National Science Foundation Grant  
DMS 156243}

\subjclass[2020]{47A56, 47N60, 52C25}
%\keywords{operator theory, matrix-valued functions, rigid unit modes}

\begin{abstract}
We prove a variant of the well-known result that intertwiners for the bilateral shift on $\ell^2(\bZ)$ are unitarily equivalent to multiplication operators on $L^2(\bT)$. This enables us to unify and extend fundamental aspects of  rigidity theory for bar-joint frameworks with an abelian symmetry group. In particular, we formulate the symbol function for a wide class of frameworks and show how to construct generalised rigid unit modes in a variety of new contexts. 
\end{abstract}

\maketitle
\tableofcontents

%%%%%%%%%%%%%%%%%%%%%%%%%%%%%%%%%%%%%%%%%%%%%%%%%%%%%%%%%%%%%%

\section{Introduction}
A {\em bar-joint framework} in $d$-dimensional Euclidean space $\bR^d$ is a pair $(G,p)$ where $G=(V,E)$ is a simple undirected graph and $p\in (\bR^d)^V$ is an assignment of points in $\bR^d$ to each of the vertices in $G$. 
The edges of this embedded graph can be viewed as rigid bars of fixed length and the vertices as rotational joints. 
Such models arise naturally in engineering and the natural sciences  in contexts where their rigidity and flexibility properties are of particular interest (eg.~structural engineering \cite{max}, mineralogy \cite{gdph}, protein analysis \cite{gas-cse}, network localisation \cite{asp} and formation control \cite{kbf}). In this article we continue the recent development of operator theoretic methods for the analysis of infinitesimal (i.e.~first-order) flexibility in bar-joint frameworks (and other related frameworks). This line of research was initiated in Owen and Power (\cite{owe-pow}). (See also \cite{bkp,kas-kit-pow,pow,pow-poly}.) 

The presence of an infinitesimal flex can sometimes be explained by an inherent symmetry in the bar-joint framework and in recent years this interplay between symmetry and rigidity has received considerable attention (\cite{con,gue-fow-pow}).   
%The infinitesimal flexes of a bar-joint framework are determined by its {\em rigidity matrix} $R(G,p)$, which can be regarded as a linear map from $(\bR^d)^V$ to $\bR^E$. 
%In the presence of symmetry, the linear space of infinitesimal flexes of a bar-joint framework will generally carry additional structure. 
For example, it is well-known that the rigidity matrix $R(G,p)$ for a finite bar-joint framework with an abelian symmetry group admits a block-diagonalisation over the irreducible representations of the group. Moreover, the diagonal blocks can be described explicitly by associated {\em orbit matrices}. This property has been utilised to obtain combinatorial characterisations of so-called {\em forced} and {\em incidental} rigidity for finite bar-joint frameworks in dimension $2$. (See \cite{jkt,schtan}.)  

Periodic bar-joint frameworks have also received much attention in recent years. Here $R(G,p)$ is an infinite matrix and so operator theory naturally  comes to the fore. In \cite{owe-pow}, it is shown that the rigidity matrix for a periodic bar-joint framework gives rise to a Hilbert space operator which is unitarily equivalent to a multiplication operator $M_\Phi$. The symbol function $\Phi$ is matrix-valued and defined on the $d$-torus $\bT^d$. The set of points in $\bT^d$ where $\Phi$ has a non-zero kernel is known as the {\em RUM spectrum} and takes its name from the phenomenon of {\em rigid unit modes} (RUMs) in  silicates and zeolites (see \cite{dove,dhh,gdph}). 
   
RUM theory for periodic bar-joint frameworks and the aforementioned decomposition theory for finite bar-joint frameworks can be viewed as two sides of the same coin. The first aim of this article is to formalise this viewpoint using techniques from Fourier analysis. The second aim is to extend the theory so that it may be applied in new contexts. 

In Section \ref{s:intertwining}, we prove a variant of the well-known result that intertwiners for the bilateral shift on $\ell^2(\bZ)$ are unitarily equivalent to multiplication operators on $L^2(\bT)$ (Theorem \ref{t:intertwiningtwist}). The distinguishing features of our theorem are that it takes place in the setting of a general locally compact abelian group, with vector-valued function spaces, and in the presence of an additional {\em twist} arising from a unitary representation.   

In Section \ref{s:symbolfunctions}, we adopt the approach taken in \cite{kas-kit-pow} and introduce the more general notions of a {\em framework} $(G,\varphi)$ for a pair of Hilbert spaces $X$ and $Y$ and an accompanying {\em coboundary matrix} $C(G,\varphi)$. This convention simplifies the proofs and also allows the results to be applied in a much wider variety of settings (as demonstrated in the final section). 
Applying the results of Section \ref{s:intertwining}, we show that a framework with a discrete abelian symmetry group gives rise to a Hilbert space coboundary operator $C(G,\varphi)$ which admits a factorisation as illustrated in Figure \ref{f:factorisation} (Theorem \ref{t:rigidityoperator}).
Note that the block diagonalisation result for finite bar-joint frameworks and the unitary equivalence result for periodic bar-joint frameworks described above both follow from this factorisation.
We then provide an explicit description of the associated symbol function $\Phi$ in terms of generalised orbit matrices (Theorem \ref{t:symbol}) and as a trigonometric polynomial (Corollary \ref{cor:FCoef}).
\begin{figure}
\label{f:factorisation}
\begin{tikzcd}[row sep=3.5em,column sep=7.4em]
\ell^2(V,X) \arrow{r}{C(G,\varphi)} \arrow[swap]{d}{S_V} 
& \ell^2(E,Y)  \\%
\ell^2(\Gamma,X^{V_0}) \arrow[dotted,darkgray]{r}{\tilde{C}(G,\varphi)} \arrow[swap]{d}{F_{X^{V_0}}} \arrow[out=175,in=185,loop,swap, "T_{\tilde{\tau}}"] 
&  \ell^2(\Gamma,Y^{E_0}) \arrow[swap]{u}{S_E^{-1}}\\
L^2(\hat{\Gamma},X^{V_0}) \arrow{r}{M_{\Phi}} 
& L^2(\hat{\Gamma},Y^{E_0})\arrow[swap]{u}{F^{-1}_{Y^{E_0}}} 
\end{tikzcd}
\caption{Factorisation of the $\ell^2$-coboundary operator $C(G,\varphi)$ for a  framework $(G,\varphi)$ with a discrete abelian symmetry group $\Gamma$.}
\end{figure}

In Section \ref{s:RUM}, we introduce a generalised RUM spectrum $\Omega(\G)$ for frameworks with a discrete abelian symmetry group $\Gamma$ and show how to construct $\chi$-symmetric vectors $z(\chi,a)$ which lie in the kernel of the coboundary matrix $C(G,\varphi)$ for each $\chi\in\Omega(\G)$ (Theorem \ref{t:twistedflex}). Note that here we continue to work in the more general setting of coboundary operators and that the RUM spectrum is presented as a subset of the dual group $\hat{\Gamma}$. In the terminology of \cite{dove,dhh,gdph}, characters $\chi\in\hat{\Gamma}$ correspond to wave-vectors in reciprocal space and $\chi$-symmetric vectors which lie in the kernel of $C(G,\varphi)$ correspond to generalised rigid unit modes.

Finally, in Section \ref{s:examples}, we illustrate the results of the preceding sections with several contrasting examples. These include a bar-joint framework in $\bR^3$ with screw axis symmetry, a direction-length framework in $\bR^2$ with both translational and reflectional symmetry and a symmetric bar-joint framework in $\bR^3$ with mixed-norm distance constraints. For each example, we provide some necessary background, formulate the symbol function $\Phi$, compute the RUM spectrum $\Omega(\G)$ and construct generalised rigid unit modes $z(\chi,a)$ for points $\chi\in\Omega(\G)$. To the best of our knowledge, the interplay between rigidity and symmetry has not previously been explored in these contexts.

\section{Intertwining relations}
\label{s:intertwining}
Let $\Gamma$ be a locally compact Hausdorff abelian group.
Denote by $L^2(\Gamma)$ the Hilbert space of square integrable functions, i.e. Borel-measurable functions $f:\Gamma\to \bC$ such that,
\[\int_\Gamma |f(\gamma)|^2 \,d\gamma<\infty\]
where we use normalised Haar measure on $\Gamma$. Recall the Haar measure of a locally compact group is decomposable on $\Gamma$; in particular, $\Gamma$ contains a $\sigma$-compact clopen subgroup (\cite{fol}).

\subsection{The scalar case}
Given a set $\S$ of bounded operators on a Hilbert space $\H$, recall that its commutant is the unital $w^*$-closed algebra 
\[\S'=\{T\in B(\H)\,:\,TS=ST, \text{ for all } S\in \S\}.\] 
If $\S$ is a selfadjoint set, i.e.~$S^*\in \S$ for all $S\in \S$, then $\S'$ is also selfadjoint and hence a $C^*$-algebra. Moreover, $\S$ is a set of commuting operators if and only if  $\S\subseteq \S'$. Thus, an operator set is \emph{maximal abelian} if and only if $\S=\S'$ (\cite{mur}). 

\begin{proposition}\label{p:masa}
The algebra of multiplication operators $\M_\mu=\{M_f\,:\, f\in L^\infty(\Gamma)\}$ is a maximal abelian selfadjoint subalgebra of $B(L^2(\Gamma))$.
\end{proposition}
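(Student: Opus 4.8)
The plan is to invoke the criterion recalled immediately before the statement: a selfadjoint operator set $\S$ is maximal abelian precisely when $\S=\S'$. Accordingly I would split the argument into the easy inclusion $\M_\mu\subseteq\M_\mu'$ and the substantive inclusion $\M_\mu'\subseteq\M_\mu$. The first is immediate from the pointwise algebra structure of multiplication operators: $M_fM_g=M_{fg}=M_gM_f$ and $M_f^*=M_{\bar f}$, so $\M_\mu$ is a commutative selfadjoint subalgebra and hence $\M_\mu\subseteq\M_\mu'$. Everything therefore reduces to showing that any $T\in B(L^2(\Gamma))$ commuting with every $M_f$, $f\in L^\infty(\Gamma)$, is itself of the form $M_h$ for some $h\in L^\infty(\Gamma)$.

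Because $\Gamma$ need not be $\sigma$-finite, I would first reduce to the $\sigma$-finite case using the decomposability of Haar measure noted in the text. Fix a $\sigma$-compact clopen subgroup $\Gamma_0\le\Gamma$; its cosets $\{\Gamma_i\}$ partition $\Gamma$ into clopen $\sigma$-finite sets, giving an orthogonal decomposition $L^2(\Gamma)=\bigoplus_i L^2(\Gamma_i)$. Since $T$ commutes with each projection $M_{\mathbf 1_{\Gamma_i}}\in\M_\mu$, it is block-diagonal, $T=\bigoplus_i T_i$, with each $T_i$ bounded on $L^2(\Gamma_i)$, commuting with all of $L^\infty(\Gamma_i)$, and satisfying $\|T_i\|\le\|T\|$.

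On each $\sigma$-finite piece I would run the classical argument. Choose a strictly positive $e_i\in L^2(\Gamma_i)$ (available by $\sigma$-finiteness) and set $h_i=(T_ie_i)/e_i$, a measurable function on $\Gamma_i$. Commutation gives $T_i(f e_i)=M_fT_ie_i=f\,h_i\,e_i=M_{h_i}(fe_i)$ for every $f\in L^\infty(\Gamma_i)$; since $\{fe_i:f\in L^\infty(\Gamma_i)\}$ is dense in $L^2(\Gamma_i)$ (as $e_i$ is nonzero a.e.), this identifies $T_i=M_{h_i}$ on all of $L^2(\Gamma_i)$. To see $h_i\in L^\infty$ with $\|h_i\|_\infty\le\|T_i\|$, I would test against indicators of the superlevel sets $\{|h_i|>\|T_i\|+\varepsilon\}$: boundedness of $T_i$ forces these sets to be null. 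Patching, $h:=(h_i)_i$ is measurable with $\|h\|_\infty=\sup_i\|h_i\|_\infty\le\|T\|$, so $h\in L^\infty(\Gamma)$ and $T=M_h\in\M_\mu$, completing the reverse inclusion.

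The main obstacle is precisely the non-$\sigma$-finite setting: in general $L^2(\Gamma)$ has no cyclic vector for $\M_\mu$, so the one-line ``$h=Te/e$'' trick fails globally. The role of the $\sigma$-compact clopen subgroup is to furnish a canonical decomposition into $\sigma$-finite clopen pieces on which a strictly positive $L^2$ vector does exist, and the decomposability (localizability) of Haar measure is what guarantees that the pieced-together $h$ is again measurable and that the essential-supremum bound survives the gluing. The remaining verifications — density of $\{fe_i\}$ in $L^2(\Gamma_i)$ and the superlevel-set estimate — are routine.
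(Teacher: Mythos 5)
Your argument is correct, and its outer structure coincides with the paper's: both reduce to showing $\M_\mu'\subseteq\M_\mu$ and both use the cosets of a $\sigma$-compact clopen subgroup to split $L^2(\Gamma)$ into $\sigma$-finite blocks on which $T$ acts diagonally. Where you diverge is in how each block is handled. The paper refines each coset further into countably many precompact pieces $\Gamma_n$ and takes the characteristic functions $1_n$ as test vectors, defining $g_n=T1_n$ and verifying $TM_{1_n}=M_{g_n}$ with $\|g_n\|_\infty\le\|T\|$ piece by piece; you instead use the classical cyclic-vector device, picking a single strictly positive $e_i\in L^2(\Gamma_i)$ on each $\sigma$-finite coset, setting $h_i=(T_ie_i)/e_i$, and exploiting density of $\{fe_i: f\in L^\infty(\Gamma_i)\}$. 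Your route is shorter on each block (one gluing step instead of two) at the cost of two auxiliary facts the paper avoids: the existence of a strictly positive $L^2$ function on a $\sigma$-finite measure space, and the density claim for $\{fe_i\}$; the paper's exhaustion argument is more elementary but requires the extra layer of countable summation over the $\Gamma_n$. One point to keep explicit in a write-up: since $M_{h_i}$ is only densely defined until $h_i$ is known to be essentially bounded, the logical order must be (a) the identity $T_i(fe_i)=h_ife_i$ on the dense set, (b) the superlevel-set estimate $\|h_i\|_\infty\le\|T_i\|$ using vectors $1_{A'}e_i$ from that same dense set, and only then (c) extension by continuity — which is how you have arranged it, so no gap. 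The final gluing issue (only countably many cosets carry the support of a given $L^2$ function, up to a locally null set) is exactly the point the paper settles by citing Rudin, Appendix E8, and your appeal to decomposability of Haar measure plays the same role.
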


\begin{proof}
$\M_\mu$ is abelian, so $\M_\mu$ is a subset of its commutant. For the reverse inclusion, let $T\in (\M_\mu)'$.
We shall show that there exists $g\in L^\infty(\Gamma)$, such that $T=M_g$.
\begin{enumerate}[(i)]
\item Suppose first that $\Gamma$ is compact, so $\mu(\Gamma)<\infty$. Then the constant function $1_\Gamma$ lies in $L^2(\Gamma)$. Define $g=T1_\Gamma \in L^2(\Gamma).$ Then for every $f\in L^\infty(\Gamma)$, we have
\[Tf=T(f1_\Gamma)=TM_f 1_\Gamma= M_f T1_\Gamma=M_f g=fg=gf.\]
Hence, it suffices to show that $g\in L^\infty(\Gamma)$. Let $\alpha >0$ and $\Gamma_\alpha=\{\gamma\in \Gamma: |g(\gamma)|>\alpha\}$. Let $1_\alpha$ be the characteristic function of $\Gamma_\alpha$. Then
\[\|T1_\alpha\|_2^2=\int_\Gamma |g 1_\alpha|^2 d\mu=\int_{\Gamma_\alpha} |g|^2 d\mu\geq \alpha^2 \mu(\Gamma_\alpha)=\alpha^2 \|1_\alpha\|_2^2,\]
hence $\alpha\leq \|T\|$ whenever $\mu(\Gamma_\alpha)>0$. Thus $\|g\|_\infty\leq \|T\|$.
\item Suppose now that $\Gamma$ is $\sigma$-compact. Then $\Gamma$ can be written as a countable union of pairwise disjoint precompact sets $\Gamma_n$. Write $1_n$ for the characteristic function of $\Gamma_n$ and let $g_n=T1_n$. Similarly
to the previous case, we obtain that $TM_{1_n}=M_{g_n}$ and $\|g_n\|_\infty\leq \|T\|$ for every $n\in \mathbb{N}$. Hence define $g\in L^\infty(\Gamma)$ by $g\big|_{\Gamma_n}=g_n\big|_{\Gamma_n}$, for every $n\in \mathbb{N}$. Then $\|g\|_\infty \leq \sup_n \|g_n\|_\infty\leq \|T\|$, so $g\in L^\infty(\Gamma)$, and for every $f\in L^2(\Gamma)$ we have
\[M_g f=\sum\limits_{n=1}^\infty M_{1_n} M_g f = \sum\limits_{n=1}^\infty  M_{g_n}f= \sum\limits_{n=1}^\infty  T M_{1_n}f= \sum\limits_{n=1}^\infty M_{1_n} T f= T f.\]
(Each of the infinite sums should be interpreted as limits in $L^2$ of the partial sums.)

\item In the general case, let $H$ be a clopen $\sigma$-compact subgroup of $\Gamma$ and let $Z$ be a subset of $\Gamma$ that contains exactly one element of each coset of $H$, so that $\Gamma$ can be written as the disjoint union of the sets $z+H,\, z\in Z$. For each $z\in Z$, denote by $1_z$ the characteristic function of $z+H$ and let $g_z=T1_z$.
Similarly to the above cases, we have $TM_{1_z}=M_{g_z}$ and $\|g_z\|_\infty\leq \|T\|$ for every $z\in Z$. Define $g\in L^\infty(\Gamma)$ by $g\big|_{z+H}=g_z\big|_{z+H}$, for every $z\in Z$. Then $g$ is locally almost everywhere well-defined, $\|g\|_\infty \leq \sup_z \|g_z\|_\infty\leq \|T\|$, so $g\in L^\infty(\Gamma)$. Now given any function $f\in L^2(\Gamma)$, there exists a countable family $\{z_n\,:\,n\in \mathbb{N}\}\subseteq Z$ such that the set $\supp(f)\cap (\Gamma\backslash (\cup_{n}\, z_n+H))$ is null (\cite[Appendix E8]{rud}). Check that since $T$ commutes with the multiplication operators of characteristic functions, it follows that $\supp(Tf)\subseteq \supp(f)$.
 Hence
\[M_g f=\sum\limits_{n=1}^\infty M_{1_{z_n}} M_g f = \sum\limits_{n=1}^\infty  M_{g_{z_n}}f= \sum\limits_{n=1}^\infty  T M_{1_{z_n}}f =\sum\limits_{n=1}^\infty M_{1_{z_n}} T f= T f.\]

\end{enumerate}
\end{proof}

The   Fourier transform $F: (L^1\cap L^2)(\Gamma)\rightarrow L^2(\hat{\Gamma})$ given by the formula   
\[\hat{f}(\xi) = \int_{\Gamma} \overline{\xi(\gamma)}f(\gamma)d\gamma\]
extends uniquely to a unitary isomorphism from $L^2(\Gamma)$ to $L^2(\hat{\Gamma})$ (\cite{fol, rud}).
The inverse Fourier transform of a function $f\in L^2(\hat{\Gamma})$ is denoted $\check{f}$.

For each $\gamma\in \Gamma$, denote by $D_{\gamma}$ the unitary operator
\[D_{\gamma}:L^2(\Gamma)\to L^2(\Gamma),\,\,\,\,\, f(\gamma')\mapsto f(\gamma'-\gamma).\]
Also, denote by $\delta_\gamma\in \hat{\hat{\Gamma}}$, the scalar function $\delta_\gamma (\xi)=\xi(\gamma)$ for each $\xi\in \hat{\Gamma}$.
Note that the map $\delta:\Gamma\to\hat{\hat{\Gamma}}$, $\gamma\mapsto \delta_\gamma$, is the Pontryagin map (\cite{fol}).

\begin{proposition}
\label{p:multiplication}
Let $\gamma\in \Gamma$ and let $M_{\delta_\gamma}$ be the multiplication operator on $L^2(\hat{\Gamma})$ by the scalar function $\delta_\gamma$. Then,
\[M_{\delta_\gamma}^\ast=F D_{\gamma} F^{-1}.\] 
\end{proposition}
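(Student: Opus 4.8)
The plan is to establish the intertwining identity $FD_\gamma = M_{\delta_\gamma}^* F$ directly on the dense subspace $(L^1\cap L^2)(\Gamma)$ and then extend it to all of $L^2(\Gamma)$ by continuity. Since the adjoint of a multiplication operator is multiplication by the conjugate symbol, we have $M_{\delta_\gamma}^* = M_{\ol{\delta_\gamma}}$, and since $F$ is a unitary isomorphism, the identity rearranges to the claimed formula $M_{\delta_\gamma}^* = FD_\gamma F^{-1}$.

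First I would fix $f\in (L^1\cap L^2)(\Gamma)$ and observe that $D_\gamma f$ again lies in $(L^1\cap L^2)(\Gamma)$, because translation by $\gamma$ preserves both $L^1$ and $L^2$ (Haar measure being translation-invariant). Hence the integral formula for the Fourier transform applies verbatim to $D_\gamma f$, giving
\[ \widehat{D_\gamma f}(\xi) = \int_\Gamma \ol{\xi(\gamma')}\, f(\gamma'-\gamma)\, d\gamma'. \]
The key computation is the change of variables $\eta = \gamma'-\gamma$: by translation invariance of Haar measure $d\gamma' = d\eta$, and by multiplicativity of the character $\xi$ we have $\xi(\eta+\gamma) = \xi(\eta)\xi(\gamma)$, so that
\[ \widehat{D_\gamma f}(\xi) = \ol{\xi(\gamma)}\int_\Gamma \ol{\xi(\eta)}\, f(\eta)\, d\eta = \ol{\delta_\gamma(\xi)}\,\hat{f}(\xi). \]
This says precisely that $FD_\gamma f = M_{\ol{\delta_\gamma}} Ff$ for every $f$ in the dense subspace.

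To finish, I would invoke density: both $FD_\gamma$ and $M_{\ol{\delta_\gamma}} F$ are bounded operators (indeed unitary, since $|\delta_\gamma|\equiv 1$ forces $M_{\delta_\gamma}$ to be unitary), so their agreement on the dense set $(L^1\cap L^2)(\Gamma)$ forces them to agree on all of $L^2(\Gamma)$. Identifying $M_{\ol{\delta_\gamma}}$ with $M_{\delta_\gamma}^*$ and composing with $F^{-1}$ on the right then yields $M_{\delta_\gamma}^* = FD_\gamma F^{-1}$. There is no genuine obstacle in this argument; the only point requiring a moment of care is the justification that $D_\gamma$ preserves $(L^1\cap L^2)(\Gamma)$, so that the Fourier integral formula is literally valid before one passes to the $L^2$-extension.
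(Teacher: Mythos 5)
Your proposal is correct and follows essentially the same route as the paper: a direct computation of the Fourier transform of a translate via the translation-invariance of Haar measure and multiplicativity of characters, followed by a density argument. The only cosmetic difference is that you compute $FD_\gamma f$ for $f$ in the dense set $(L^1\cap L^2)(\Gamma)$, whereas the paper computes $FD_\gamma F^{-1}\hat f$ for $\hat f$ in a dense subspace of $L^2(\hat\Gamma)$; the underlying change of variables and the conclusion are identical.
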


\begin{proof}
Let $f\in (L^1\cap L^2) (\Gamma)$ such that $\hat{f}\in L^1(\hat{\Gamma})$. For every $\xi\in \hat{\Gamma}$ we have
\begin{eqnarray*}
(F D_{\gamma} F^{-1} \hat{f})(\xi)&=&\int_\Gamma (D_{\gamma} F^{-1} \hat{f})(x) \overline{\xi(x)}dx\\
&=& \int_\Gamma (F^{-1} \hat{f})(x-\gamma) \overline{\xi(x)}dx\\
&\stackrel{x-\gamma \rightarrow x}{=}& \int_\Gamma (F^{-1} \hat{f})(x) \overline{\xi(x+\gamma)}dx\\
&=& \int_\Gamma (F^{-1} \hat{f})(x) \overline{\xi(x)}dx \, \overline{\xi(\gamma)}\\
&=& (F F^{-1} \hat{f})(\xi) \overline{\xi(\gamma)}\\
&=& \overline{\xi(\gamma)} \hat{f}(\xi)\\
&=& \overline{\delta_\gamma(\xi)} \hat{f}(\xi).
\end{eqnarray*}
Thus, if follows that $F D_{\gamma} F^{-1} \hat{f}= \overline{\delta_\gamma}\hat{f}$. The result now follows since the set of such functions $\hat{f}$ forms a dense subspace in $L^2(\hat{\Gamma})$ (\cite{osb,rud}).
\end{proof}

\begin{corollary}
\label{c:multiplication}
Let $L\in B(L^2(\Gamma))$ and define $\Lambda= F L F^{-1}\in B(L^2(\hat{\Gamma}))$. 
Then, for each $\gamma\in \Gamma$, the following statements are equivalent.
\begin{enumerate}[(i)]
\item
$D_{\gamma} L=L D_{\gamma}$.
\item
$M_{\delta_\gamma}^\ast \Lambda =\Lambda M_{\delta_\gamma}^\ast$.
\end{enumerate}
\end{corollary}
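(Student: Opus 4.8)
The plan is to reduce the two operator identities to one another by conjugating everything with the Fourier transform, relying on Proposition \ref{p:multiplication} to convert the multiplication operator into a conjugated translation. The key observation is that Proposition \ref{p:multiplication} gives $M_{\delta_\gamma}^\ast = F D_\gamma F^{-1}$, so statement (ii) is really a disguised version of statement (i).

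Concretely, I would substitute $\Lambda = F L F^{-1}$ and $M_{\delta_\gamma}^\ast = F D_\gamma F^{-1}$ into both sides of (ii) and let the adjacent factors $F^{-1}F = I$ cancel. The left-hand side $M_{\delta_\gamma}^\ast \Lambda$ collapses to $F D_\gamma L F^{-1}$, while the right-hand side $\Lambda M_{\delta_\gamma}^\ast$ collapses to $F L D_\gamma F^{-1}$. Hence (ii) is equivalent to the single equation
\[
F D_\gamma L F^{-1} = F L D_\gamma F^{-1}.
\]
Because $F$ is a unitary isomorphism from $L^2(\Gamma)$ onto $L^2(\hat{\Gamma})$, it is invertible, so conjugation by $F$ is a bijection on the relevant operator identities: I may cancel the outer $F$ and $F^{-1}$ to obtain $D_\gamma L = L D_\gamma$, which is precisely (i). Running the same substitution in reverse shows that (i) implies (ii), giving the equivalence.

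I do not expect any genuine obstacle here, since all of the analytic content (the interplay between translation, the Fourier transform, and the character function $\delta_\gamma$) has already been absorbed into Proposition \ref{p:multiplication}. The only point requiring a little care is the bookkeeping of the adjoint: one must use the identity $M_{\delta_\gamma}^\ast = F D_\gamma F^{-1}$ exactly as stated, rather than $M_{\delta_\gamma} = F D_\gamma F^{-1}$, so that the cancellations of $F^{-1}F$ occur in the correct positions and no stray adjoints are introduced.
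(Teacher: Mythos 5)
Your proposal is correct and is essentially identical to the paper's own proof: both conjugate by the unitary $F$, invoke Proposition \ref{p:multiplication} to identify $F D_\gamma F^{-1}$ with $M_{\delta_\gamma}^\ast$, and cancel the intermediate $F^{-1}F$ factors. No issues.
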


\proof
Let $\gamma\in \Gamma$.
Note that $D_{\gamma} L=L D_{\gamma}$ if and only if
\[F D_{\gamma} F^{-1} F L F^{-1}= F L F^{-1}F D_{\gamma} F^{-1}.\]
The result now follows by Proposition \ref{p:multiplication}.
\endproof

\begin{proposition}
\label{t:intertwining}
Let $L\in B(L^2(\Gamma))$. Then $L$ satisfies the commuting property
$D_{\gamma} L=L D_{\gamma}$ for all $\gamma\in \Gamma$ if and only if $L$ is unitarily equivalent to a multiplication operator $M_\Phi\in B(L^2(\hat{\Gamma}))$ for some $\Phi\in L^\infty(\hat{\Gamma})$. In particular, $L =F^{-1}M_\Phi F$.
\end{proposition}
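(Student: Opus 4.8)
The plan is to prove both directions of the equivalence, using the earlier results as building blocks. The key bridge is Corollary \ref{c:multiplication}, which translates the commutation relation $D_\gamma L = L D_\gamma$ into the statement that $\Lambda = FLF^{-1}$ commutes with $M_{\delta_\gamma}^\ast$ for every $\gamma\in\Gamma$. Taking adjoints, this is equivalent to $\Lambda^\ast$ commuting with $M_{\delta_\gamma}$, and since the commutant is a selfadjoint algebra, one checks this is the same as $\Lambda$ itself commuting with all $M_{\delta_\gamma}$.

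First I would establish the easier direction. Suppose $L = F^{-1}M_\Phi F$ for some $\Phi\in L^\infty(\hat\Gamma)$, so $\Lambda = M_\Phi$. Since any two multiplication operators commute (Proposition \ref{p:masa} shows $\M_\mu$ is abelian), $M_\Phi$ commutes with each $M_{\delta_\gamma}^\ast$, and so by Corollary \ref{c:multiplication} we recover $D_\gamma L = L D_\gamma$ for all $\gamma$. This direction is essentially immediate.

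For the harder converse, suppose $D_\gamma L = L D_\gamma$ for all $\gamma\in\Gamma$. By Corollary \ref{c:multiplication}, $\Lambda$ commutes with $M_{\delta_\gamma}^\ast$ for every $\gamma$. The strategy is to show $\Lambda$ lies in the commutant of the full multiplication algebra $\M_\mu$ on $L^2(\hat\Gamma)$ and then invoke Proposition \ref{p:masa} (applied with $\hat\Gamma$ in place of $\Gamma$) to conclude $\Lambda = M_\Phi$ for some $\Phi\in L^\infty(\hat\Gamma)$; setting $L = F^{-1}M_\Phi F$ then completes the proof. The main obstacle is the step from commuting with the single family $\{M_{\delta_\gamma}^\ast : \gamma\in\Gamma\}$ to commuting with all of $\M_\mu$. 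Here I would argue that the selfadjoint set $\{M_{\delta_\gamma}, M_{\delta_\gamma}^\ast : \gamma\in\Gamma\}$ generates $\M_\mu$ as a von Neumann algebra: the functions $\delta_\gamma$ are exactly the characters on $\hat\Gamma$ (via the Pontryagin identification $\delta_\gamma(\xi)=\xi(\gamma)$), their finite linear combinations are the trigonometric polynomials, and these are $w^\ast$-dense in $L^\infty(\hat\Gamma)$. Consequently the von Neumann algebra they generate is all of $\M_\mu$.

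To make this precise I would note that $\Lambda\in\{M_{\delta_\gamma}^\ast:\gamma\in\Gamma\}'$ and, after taking adjoints, also $\Lambda^\ast\in\{M_{\delta_\gamma}^\ast:\gamma\in\Gamma\}'$, so $\Lambda$ commutes with the selfadjoint generating set; since a commutant is always a $w^\ast$-closed algebra, $\Lambda$ commutes with the $w^\ast$-closed algebra they generate, namely $\M_\mu$. Thus $\Lambda\in\M_\mu' = \M_\mu$ by maximal abelianness, giving $\Lambda = M_\Phi$ with $\|\Phi\|_\infty = \|M_\Phi\| = \|\Lambda\| = \|L\|<\infty$. The only technical care needed is the density of trigonometric polynomials in the $w^\ast$ topology for a general locally compact abelian group; I would handle this by the Stone--Weierstrass theorem on the compact-group pieces together with the clopen-subgroup decomposition used throughout Proposition \ref{p:masa}, so that no new ideas beyond those already deployed are required.
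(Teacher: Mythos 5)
Your proposal is correct in its overall architecture and it funnels through the same two pillars as the paper's proof (Corollary \ref{c:multiplication} to transfer the relation to $\hat{\Gamma}$, then Proposition \ref{p:masa} to identify $\Lambda$ as a multiplication operator), but it handles the crucial middle step differently. The crux in both arguments is upgrading ``$\Lambda$ commutes with the family $\{M_{\delta_\gamma}\}_{\gamma\in\Gamma}$'' to ``$\Lambda$ commutes with all of $\M_\mu$.'' The paper does this by a concrete computation: for $f,g\in L^2\cap L^\infty(\hat{\Gamma})$ it shows that the two $L^1$ functions $(\Lambda f)\overline{g}$ and $f\,\overline{\Lambda^*g}$ have the same Fourier transform on $\Gamma$, hence are equal by the uniqueness theorem, and then pairs against an arbitrary $h\in L^\infty(\hat{\Gamma})$. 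You instead argue abstractly that the self-adjoint set $\{M_{\delta_\gamma}\}$ generates $\M_\mu$ as a von Neumann algebra, so that the (automatically $w^*$-closed) commutant containing $\Lambda$'s relations propagates to all of $\M_\mu$. Both routes are legitimate; yours is shorter once the generation statement is granted, while the paper's is self-contained.

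The one soft spot is your justification of the generation claim. What you actually need is that the trigonometric polynomials are $w^*$-dense in $L^\infty(\hat{\Gamma})=L^1(\hat{\Gamma})^*$ (ultraweak convergence of $M_{q_i}$ to $M_h$ is exactly $w^*$-convergence of $q_i$ to $h$, since every element of $L^1(\hat{\Gamma})$ is of the form $\sum_n f_n\overline{g_n}$ with $f_n,g_n\in L^2$). Your proposed tool --- Stone--Weierstrass on compact pieces plus the clopen-subgroup decomposition --- does not apply cleanly here: when $\Gamma$ is not discrete, $\hat{\Gamma}$ is not compact, the characters $\delta_\gamma$ do not lie in $C_0(\hat{\Gamma})$, and uniform approximation on a compact piece gives no control of the approximating polynomials off that piece, which is needed to pair against $L^1$ functions. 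The clean argument is Hahn--Banach duality: if $\phi\in L^1(\hat{\Gamma})$ annihilates every $\delta_\gamma$, then the Fourier transform of $\phi$ (a function on $\hat{\hat{\Gamma}}\cong\Gamma$ by Pontryagin duality) vanishes identically, so $\phi=0$ by the uniqueness theorem. Note that this is precisely the ingredient the paper's computation uses, applied to the particular $L^1$ function $(\Lambda f)\overline{g}-f\,\overline{\Lambda^*g}$; so once you repair that step your proof and the paper's rest on the identical analytic fact.
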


\begin{proof}
Suppose first that $L\in B(L^2(\Gamma))$ and 
$D_{\gamma} L=L D_{\gamma}$ for all $\gamma\in \Gamma$.
By Corollary \ref{c:multiplication},  setting $\Lambda= F L F^{-1}\in B(L^2(\hat{\Gamma}))$, we obtain that 
\[ M_{\delta_\gamma}^\ast \Lambda =\Lambda M_{\delta_\gamma}^\ast,\]
for all $\gamma\in \Gamma$.
Let $f,g\in L^2(\hat{\Gamma}) \cap L^\infty(\hat{\Gamma})$. 
Then, for all $\gamma\in \Gamma$,
\begin{eqnarray*}
F((\Lambda  f)\overline{g}) (\gamma) 
&=& \int_{\hat{\Gamma}} \overline{\delta_\gamma(\xi)} (\Lambda  f)(\xi)\overline{g(\xi)} d\xi \\
&=& \int_{\hat{\Gamma}} (M_{\delta_\gamma}^\ast \Lambda f)(\xi)\overline{g(\xi)} d\xi \\
&=& \int_{\hat{\Gamma}}  (\Lambda M_{\delta_\gamma}^\ast f)(\xi)\overline{g(\xi)} d\xi \\
&=& \langle \Lambda M_{\delta_\gamma}^\ast f, g\rangle_{L^2(\hat{\Gamma})} 
\end{eqnarray*}
Similarly, for all $\gamma\in \Gamma$,
\begin{eqnarray*}
F(f(\overline{\Lambda^* g}))(\gamma) &=& 
\int_{\hat{\Gamma}}  \overline{\delta_\gamma(\xi)} f(\xi)\overline{\Lambda^* g(\xi)} d\xi \\
&=& \int_{\hat{\Gamma}}  (M_{\delta_\gamma}^\ast f)(\xi)\overline{\Lambda^* g(\xi)} d\xi \\
&=& 
\langle  M_{\delta_\gamma}^\ast f, \Lambda^* g\rangle_{L^2(\hat{\Gamma})} \\
&=& 
\langle  \Lambda M_{\delta_\gamma}^\ast f,   g\rangle_{L^2(\hat{\Gamma})} 
\end{eqnarray*}
Therefore, by the uniqueness of the Fourier transform we obtain
\[(\Lambda  f)\overline{g}=f \overline{\Lambda^* g}.\]
It now follows that, for all $h\in L^\infty(\hat{\Gamma})$,
\[\langle M_h\Lambda f,g\rangle_{L^2(\hat{\Gamma})}  = \langle M_h f, \Lambda^*g\rangle_{L^2(\hat{\Gamma})}= \langle \Lambda M_h f, g\rangle_{L^2(\hat{\Gamma})} \]
for every $f,g\in L^2(\hat{\Gamma}) \cap L^\infty(\hat{\Gamma}) $, 
and since these functions are dense in $L^2$, we get
 $M_{h}\Lambda=\Lambda M_{h}$,  so $\Lambda$ commutes with the algebra $\M_\mu$ of multiplication  operators. Thus, the result follows from Proposition \ref{p:masa}.

The reverse direction is obtained from Corollary \ref{c:multiplication}, so the proof is complete.
\end{proof}

\begin{remark}
If $\Gamma$ is a discrete abelian group and $\Phi\in L^1(\hat{\Gamma})$ then the operator $L$ in Proposition \ref{t:intertwining} satisfies, \[L(f)(\gamma') = \int_{\Gamma} \hat{\Phi}(\gamma'-\gamma)f(\gamma) d\gamma,\]
for all $\gamma'\in \Gamma$.
In particular, if $\Gamma=\bZ$ then the matrix for $L$ is the Laurent matrix with symbol $\Phi$.
\end{remark}

\subsection{Vector-valued functions}
Let $\Gamma$ be a locally compact abelian group and let $X$ and $Y$ be complex Hilbert spaces. Let also $\{x_1,x_2,\dots\}$ and $\{y_1,y_2,\dots\}$  be orthonormal bases on $X$ and $Y$, respectively. Denote by $L^2(\Gamma,X)$ the Hilbert space of square integrable $X$-valued functions. i.e. Bochner-measurable functions $f:\Gamma\to X$ such that,
\[\int_\Gamma \|f(\gamma)\|^2 \,d\gamma<\infty\] where we use normalised Haar measure on $\Gamma$.
Note that we identify the Hilbert spaces $L^2(\Gamma,X)$ and $L^2(\Gamma)\otimes X$; given any $g\in L^2(\Gamma)$, the function $g_k\in L^2(\Gamma,X)$ defined by $g_k(\gamma)=g(\gamma)x_k$, is identified with the elementary tensor $g\otimes x_k \in L^2(\Gamma)\otimes X$. 

The Fourier transform $F_X\in B(L^2(\Gamma, X),L^2(\hat{\Gamma}, X))$ is the unitary operator given by    
$F_X=F\otimes 1_X$, where $1_X$ is the identity operator on $X$. 
For each $\gamma\in \Gamma$, denote by $U_{\gamma}$ and $W_{\gamma}$ the unitary operators
\[U_{\gamma}=D_{\gamma}\otimes 1_X:L^2(\Gamma,X)\to L^2(\Gamma,X),\,\,\,\,\, f(\gamma')\mapsto f(\gamma'-\gamma),\]
\[W_{\gamma}=D_{\gamma}\otimes 1_Y:L^2(\Gamma,Y)\to L^2(\Gamma,Y),\,\,\,\,\, g(\gamma')\mapsto g(\gamma'-\gamma).\]

Given now an operator $T\in B(L^2(\Gamma, X),L^2(\Gamma, Y))$, for each 
$i,j$
let $T_{ij}\in B(L^2(\Gamma))$ be the bounded operator that is uniquely defined by the sesquilinear form,
\begin{equation}
\label{eqm1}
\langle T_{ij} f,g\rangle= \langle T(f\otimes x_j), g\otimes y_i\rangle, \, f,g\in L^2(\Gamma).
\end{equation}
We call $T_{ij}$ a {\em matrix element} of $T$.
A bounded operator $T\in B(L^2(\Gamma, X),L^2(\Gamma, Y))$ is called a \emph{multiplication operator} if
there exists $\Phi \in L^\infty(\Gamma,B(X,Y))$ such that
\[
\forall_{  f \in L^2(\Gamma, X)} \quad (Tf)(\gamma) = \Phi(\gamma) f(\gamma) \ {\rm a.e.} \ \gamma .
\]
We refer to the function $\Phi$ 
as the {\em operator-valued symbol function} for $T$ and we write $T=M_{\Phi}$.
In terms of the matrix elements $T_{ij}$ from \eqref{eqm1}, we have
$T_{ij}=M_{\Phi_{ij}}$ where $\Phi_{ij}\in L^\infty(\Gamma)$.

\begin{proposition}
\label{t:intertwiningX}
Let $L\in B(L^2(\Gamma,X),L^2(\Gamma,Y))$. Then $L$ satisfies the intertwining property
$W_{\gamma} L=L U_{\gamma}$ for all $\gamma\in \Gamma$ if and only if $L$ is unitarily equivalent to a multiplication operator $M_{\Phi}
 \in B(L^2(\hat{\Gamma},X),L^2(\hat{\Gamma},Y))$ for some $\Phi\in L^\infty(\hat{\Gamma}, 
B(X,Y))$.
In particular, $L =F_Y^{-1}M_\Phi F_X$.
\end{proposition}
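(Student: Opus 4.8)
The plan is to reduce the vector-valued statement to the scalar intertwining result, Proposition \ref{t:intertwining}, by passing to matrix elements. Since $L^2(\Gamma,X) \cong L^2(\Gamma)\otimes X$ and similarly for $Y$, the operator $L$ is completely determined by its matrix elements $L_{ij}\in B(L^2(\Gamma))$ defined in \eqref{eqm1}. The key observation is that the operators $U_\gamma$ and $W_\gamma$ are themselves ``diagonal'' with respect to the tensor decomposition: since $U_\gamma = D_\gamma\otimes 1_X$ and $W_\gamma = D_\gamma\otimes 1_Y$, conjugating or intertwining by them acts on each matrix element simply as the scalar operator $D_\gamma$.

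First I would translate the intertwining relation $W_\gamma L = L U_\gamma$ into a statement about each matrix element. Testing against elementary tensors $f\otimes x_j$ and $g\otimes y_i$ and using that $W_\gamma(g\otimes y_i) = (D_\gamma g)\otimes y_i$ and $U_\gamma(f\otimes x_j)=(D_\gamma f)\otimes x_j$, together with $W_\gamma^* = W_{-\gamma}$, one checks that $W_\gamma L = L U_\gamma$ holds for all $\gamma$ if and only if $D_\gamma L_{ij} = L_{ij} D_\gamma$ for every pair $i,j$ and every $\gamma\in\Gamma$. Applying Proposition \ref{t:intertwining} to each scalar operator $L_{ij}$, I obtain symbol functions $\Phi_{ij}\in L^\infty(\hat\Gamma)$ with $L_{ij} = F^{-1}M_{\Phi_{ij}}F$, equivalently $F L_{ij} F^{-1} = M_{\Phi_{ij}}$.

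Next I would assemble the scalar symbols $\Phi_{ij}$ into an operator-valued symbol $\Phi\colon\hat\Gamma\to B(X,Y)$ by declaring $\Phi(\xi)$ to be the operator whose matrix entries (in the chosen bases) are the values $\Phi_{ij}(\xi)$, and show that $\Lambda := F_Y L F_X^{-1}$ equals $M_\Phi$. Because $F_X = F\otimes 1_X$ and $F_Y = F\otimes 1_Y$, the matrix elements of $\Lambda$ are exactly $F L_{ij}F^{-1}=M_{\Phi_{ij}}$, so $\Lambda$ acts as the correct pointwise-matrix operator on elementary tensors, and hence on a dense subspace. The conclusion $L = F_Y^{-1}M_\Phi F_X$ then follows, and the converse direction is immediate by reversing these steps (or by the scalar converse). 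The main obstacle I anticipate is the measurability and boundedness bookkeeping for $\Phi$: I must verify that $\xi\mapsto\Phi(\xi)$ is a genuine element of $L^\infty(\hat\Gamma,B(X,Y))$ rather than merely a collection of scalar symbols. Boundedness should follow from $\|\Phi\|_\infty = \|M_\Phi\| = \|\Lambda\| = \|L\|<\infty$, but establishing weak (Bochner/SOT) measurability of the $B(X,Y)$-valued function $\Phi$ from the measurability of its entries $\Phi_{ij}$, and confirming that the formal pointwise action genuinely defines a bounded operator agreeing with $\Lambda$ (especially when $X$ or $Y$ is infinite-dimensional, where one must control the essential supremum of the operator norms rather than the individual entries), is the step requiring the most care.
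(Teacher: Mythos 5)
Your proposal follows essentially the same route as the paper: translate $W_\gamma L = L U_\gamma$ into commutation of each matrix element $L_{ij}$ with $D_\gamma$ by testing on elementary tensors, apply the scalar Proposition \ref{t:intertwining} to obtain symbols $\Phi_{ij}\in L^\infty(\hat\Gamma)$, and assemble these into the operator-valued symbol of $T=F_Y L F_X^{-1}$. The measurability and norm-control issues you flag in the assembly step are real but are handled (indeed, largely elided) the same way in the paper, which simply records $\|\Phi\|_{L^\infty(\hat\Gamma,B(X,Y))}=\|T\|=\|L\|$; your argument is correct.
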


\begin{proof}
Suppose that the intertwining property holds. Then for every $f,g\in L^2(\Gamma)$  we have
\[\langle  L (f\otimes x_j), W_{\gamma}^\ast (g\otimes y_i)\rangle
= \langle W_{\gamma} L (f\otimes x_j),g\otimes y_i\rangle 
= \langle LW_{\gamma} (f\otimes x_j),g\otimes y_i\rangle.\]
Equivalently, by the definition of $W_{\gamma}$,
\[\langle  L (f\otimes x_j), (D_{\gamma}^\ast g)\otimes y_i\rangle 
= \langle L ((D_{\gamma}f)\otimes x_j),g\otimes y_i\rangle.\]
This implies, 
\[
\langle  L_{ij} f, (D_{\gamma}^\ast g)\rangle 
=\langle L_{ij} (D_{\gamma}f),g\rangle,\]
which implies
\[\langle  D_{\gamma}L_{ij} f, g\rangle 
=\langle L_{ij} D_{\gamma}f,g\rangle.
\]
Thus, for each $i,j$, the operator $L_{ij}$ commutes with $D_{\gamma}$, for all $\gamma\in \Gamma$. Hence by Proposition \ref{t:intertwining}, for each $i,j$ we have $L_{ij}=F_Y^{-1} M_{\Phi_{ij}}F_X$, for some $\Phi_{ij}\in L^\infty (\hat{\Gamma})$.

Define $T = F_Y L F_X^{-1}$. This is a bounded operator that satisfies
\[
(F_Y U_{\gamma} F^{-1}_Y )T = T (F_X W_{\gamma} F^{-1}_X ) \quad \forall \gamma \in \Gamma.
\]
As $T_{ij} = M_{\Phi_{ij}}$, we conclude that $T = M_\Phi$, where $\Phi$ is the $B(X,Y)$ valued function
with matrix elements $\Phi_{i,j}$. Moreover
\[
\| \Phi \|_{L^\infty (\hat{\Gamma}, B(X,Y)) } = \| T \| = \| L \| .
\]

Once again, the reverse direction follows by straightforward calculations.
\end{proof}

\subsection{Intertwining with a twist}
Let $U(X)$ denote the unitary group of $X$ and let $\pi:\Gamma\to U(X)$ be a unitary representation of $\Gamma$ on $X$. Define $T_{\pi}\in B(L^2(\Gamma,X))$ by $(T_\pi f)(\gamma)=\pi(-\gamma)f(\gamma)$.
For each $\gamma\in \Gamma$, define 
$U_{\gamma,\pi}\in B(L^2(\Gamma,X))$ by
$(U_{\gamma,\pi}f)(\gamma')=\pi(\gamma)f(\gamma'-\gamma)$.

\begin{lemma}
\label{l:intertwiningtwist}
Let $\pi:\Gamma\to U(X)$ be a unitary representation.
Then, for each $\gamma\in \Gamma$, 
\[T_\pi U_{\gamma,\pi}= U_{\gamma} T_\pi.\]
\end{lemma}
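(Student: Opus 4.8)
The identity is an equality between two bounded operators on $L^2(\Gamma,X)$, so the plan is to verify it pointwise: apply each side to an arbitrary $f\in L^2(\Gamma,X)$ and evaluate the resulting function at an arbitrary point $\gamma'\in\Gamma$. Since all three operators $T_\pi$, $U_{\gamma,\pi}$ and $U_\gamma$ are given by explicit pointwise formulae, this reduces the claim to a short algebraic identity in the representation $\pi$, with no analytic content.

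For the left-hand side I would first compute $g:=U_{\gamma,\pi}f$, which by definition satisfies $g(\gamma')=\pi(\gamma)f(\gamma'-\gamma)$, and then apply $T_\pi$ to obtain $(T_\pi g)(\gamma')=\pi(-\gamma')g(\gamma')=\pi(-\gamma')\pi(\gamma)f(\gamma'-\gamma)$. For the right-hand side I would first compute $h:=T_\pi f$, so that $h(\gamma'')=\pi(-\gamma'')f(\gamma'')$, and then apply $U_\gamma$, which merely shifts the argument, giving $(U_\gamma h)(\gamma')=h(\gamma'-\gamma)=\pi\big(-(\gamma'-\gamma)\big)f(\gamma'-\gamma)$.

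Comparing the two expressions, the claim reduces to the equality $\pi(-\gamma')\pi(\gamma)=\pi(\gamma-\gamma')$, which holds because $\pi$ is a group homomorphism into $U(X)$, so that $\pi(a)\pi(b)=\pi(a+b)$ for all $a,b\in\Gamma$. As $f$ and $\gamma'$ were arbitrary, the two operators agree. There is no genuine obstacle here: the single point to keep track of is that the two twisting factors combine correctly via the homomorphism property of $\pi$ (this is precisely where the representation being a homomorphism, rather than an arbitrary family of unitaries, is used), and the pointwise identities hold for the a.e.\ representatives automatically since each operator acts by an honest pointwise formula.
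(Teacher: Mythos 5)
Your proposal is correct and follows essentially the same route as the paper: both compute $(T_\pi U_{\gamma,\pi}f)(\gamma')=\pi(-\gamma')\pi(\gamma)f(\gamma'-\gamma)$ and $(U_\gamma T_\pi f)(\gamma')=\pi(\gamma-\gamma')f(\gamma'-\gamma)$ and conclude via the homomorphism property of $\pi$. No issues.
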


\proof
Given $f\in L^2(\Gamma,X)$ and $\gamma\in \Gamma$, we have
\[(T_\pi U_{\gamma,\pi}f)(\gamma' )=\pi(-\gamma' )(U_{\gamma,\pi} f)(\gamma' )=
\pi(-\gamma' )\pi(\gamma)f(\gamma' -\gamma) =\pi(\gamma-\gamma' ) f(\gamma' -\gamma),\]
while
\[(U_{\gamma} T_\pi f)(\gamma' )=(T_\pi f)(\gamma' -\gamma) =
\pi(\gamma-\gamma' ) f(\gamma' -\gamma),\]
so the proof is complete.
\endproof

\begin{theorem}
\label{t:intertwiningtwist}
Let $C\in B(L^2(\Gamma,X),L^2(\Gamma,Y))$ and let $\pi:\Gamma\to U(X)$ be a unitary representation.
Then $W_{\gamma}C=C U_{\gamma,\pi}$ for all $\gamma\in \Gamma$ if and only if $C=L T_\pi$, where $L$ is unitarily equivalent to a multiplication operator $M_\Phi\in B(L^2(\hat{\Gamma},X),L^2(\hat{\Gamma},Y))$ for some $\Phi\in L^\infty(\hat{\Gamma},
B(X,Y))$. In particular, $L =F_Y^{-1}M_\Phi F_{X}$.
\end{theorem}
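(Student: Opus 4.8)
The plan is to reduce the twisted intertwining relation to the untwisted one of Proposition \ref{t:intertwiningX} by absorbing the representation into the unitary $T_\pi$. First I would record that $T_\pi$ is unitary: since $\pi(-\gamma)$ is unitary for each $\gamma$, the pointwise formula $(T_\pi f)(\gamma)=\pi(-\gamma)f(\gamma)$ defines an isometric, surjective operator on $L^2(\Gamma,X)$, with inverse given by $(T_\pi^{-1}f)(\gamma)=\pi(\gamma)f(\gamma)$. In particular $T_\pi^{-1}$ exists as a bounded operator, which is what makes the factorisation $C=LT_\pi$ invertible and the operator $L:=CT_\pi^{-1}$ well defined.

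For the forward direction, suppose $W_\gamma C=CU_{\gamma,\pi}$ for all $\gamma$ and set $L:=CT_\pi^{-1}$, so that $C=LT_\pi$. Substituting into the hypothesis gives $W_\gamma LT_\pi=LT_\pi U_{\gamma,\pi}$, and Lemma \ref{l:intertwiningtwist} rewrites the right-hand side as $LU_\gamma T_\pi$. Cancelling the invertible factor $T_\pi$ on the right yields $W_\gamma L=LU_\gamma$ for every $\gamma\in\Gamma$. Thus $L$ satisfies the untwisted intertwining property, and Proposition \ref{t:intertwiningX} supplies $\Phi\in L^\infty(\hat{\Gamma},B(X,Y))$ with $L=F_Y^{-1}M_\Phi F_X$, as required.

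For the converse, suppose $C=LT_\pi$ with $L=F_Y^{-1}M_\Phi F_X$. The reverse implication of Proposition \ref{t:intertwiningX} gives $W_\gamma L=LU_\gamma$, and then the chain $W_\gamma C=W_\gamma LT_\pi=LU_\gamma T_\pi=LT_\pi U_{\gamma,\pi}=CU_{\gamma,\pi}$, using Lemma \ref{l:intertwiningtwist} in the form $U_\gamma T_\pi=T_\pi U_{\gamma,\pi}$, recovers the twisted relation. I do not anticipate any serious obstacle here: the substance of the argument is entirely contained in the conjugation identity of Lemma \ref{l:intertwiningtwist}, and the only point requiring a moment's care is confirming that $T_\pi$ is genuinely invertible (not merely isometric), so that it may be cancelled on the right and so that $L:=CT_\pi^{-1}$ is well defined.
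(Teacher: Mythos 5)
Your proposal is correct and follows essentially the same route as the paper: define $L=CT_\pi^{-1}$, use Lemma \ref{l:intertwiningtwist} to convert the twisted relation into the untwisted one $W_\gamma L=LU_\gamma$, and invoke Proposition \ref{t:intertwiningX}; the converse chain is the same as well. Your explicit remark that $T_\pi$ is unitary with inverse $(T_\pi^{-1}f)(\gamma)=\pi(\gamma)f(\gamma)$ is a sensible (if implicit in the paper) piece of bookkeeping, not a departure.
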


\begin{proof}
Suppose $W_{\gamma}C=C U_{\gamma,\pi}$ for all $\gamma\in \Gamma$.
Then, by Lemma \ref{l:intertwiningtwist}, 
\[W_{\gamma}C T_\pi^{-1}=C U_{\gamma,\pi} T_\pi^{-1} = C T_\pi^{-1} U_\gamma\] 
for all $\gamma\in \Gamma$. 
The conclusion now follows from Proposition \ref{t:intertwiningX} on taking $L=C T_\pi^{-1}$.
Conversely, suppose $C=L T_\pi$, where $L$ is unitarily equivalent to a  multiplication operator $M_{\Phi}
 \in B(L^2(\hat{\Gamma},X),L^2(\hat{\Gamma},Y))$ for some $\Phi\in L^\infty(\hat{\Gamma}, B(X,Y))$.
By  Proposition \ref{t:intertwiningX} and Lemma \ref{l:intertwiningtwist},
for each $\gamma\in \Gamma$,
\[W_{\gamma} C = W_{\gamma} L  T_\pi = L  U_{\gamma} T_\pi
= L  T_\pi U_{\gamma, \pi} = C U_{\gamma, \pi}.\]
\end{proof}

%%%%%%%%%%%%%%%%%%%%%%%%%%%%%%%%%%%%%%%%%%%%%%%%%%%%%%%%%%%% 

\section{Symbol functions for symmetric frameworks}
\label{s:symbolfunctions}
In this section we introduce frameworks $(G,\varphi)$ and their associated coboundary matrices $C(G,\varphi)$. We show that the action of a discrete abelian group on $(G,\varphi)$ gives rise to a  Hilbert space coboundary operator which satisfies twisted intertwining relations of the form considered in Section \ref{s:intertwining}. 
In particular, this coboundary operator can be expressed as a composition $L T_\pi$  in the manner of Theorem \ref{t:intertwiningtwist}, where $L$ is unitarily equivalent to a multiplication operator $M_{\Phi}$. We then present an explicit formula for the operator-valued symbol function $\Phi$.

\subsection{Frameworks}
Let $X$ and $Y$ be finite dimensional complex Hilbert spaces.
A  {\em framework}  for $X$ and $Y$ is a pair $(G,\varphi)$ consisting of a simple  undirected graph $G=(V,E)$ and a collection 
$\varphi=(\varphi_{v,w})_{v,w\in V}$ of linear maps $\varphi_{v,w}:X\to Y$ with the property that $\varphi_{v,w}=0$ if $vw\notin E$ and 
$\varphi_{v,w}=-\varphi_{w,v}$ for all $vw\in E$. 
We will assume throughout this section that the vertex set $V$ is a finite or countably infinite set. The graph $G$ is said to have {\em bounded degree} if 
$\sup_{v\in V} \deg(v)<\infty$, where $\deg(v)$ denotes the degree of the vertex $v\in V$.
% We also assume that the set $\varphi$ is bounded in the sense that $\sup_{vw\in E} \,\|\varphi_{v,w}\|_{op}<\infty$.DK:This is automatic once we assume finite orbits.

A {\em coboundary matrix} for $(G,\varphi)$ is a matrix $C(G,\varphi)$ with rows indexed by $E$ and columns indexed by 
$V$. The row entries for a given edge $vw\in E$ are as follows,
\begin{eqnarray*}\bordermatrix{ & &   & v & &  & & w & & \cr
vw & \cdots & 0 & \varphi_{v,w} & 0& \cdots & 0& \varphi_{w,v} & 0 & \cdots  
\cr}.\end{eqnarray*}

\begin{example}
\label{ex:framework}
Let $(G,\varphi)$ be a framework for $X$ and $Y$ where 
$G=(V,E)$ is the 4-cycle with vertex set $V=\{v_1,v_2,v_3,v_4\}$ and edge set $E=\{v_1v_2,v_2v_3,v_3v_4,v_4v_1\}$. 
A coboundary matrix for $(G,\varphi)$ has the following form (up to permutations of rows and columns),

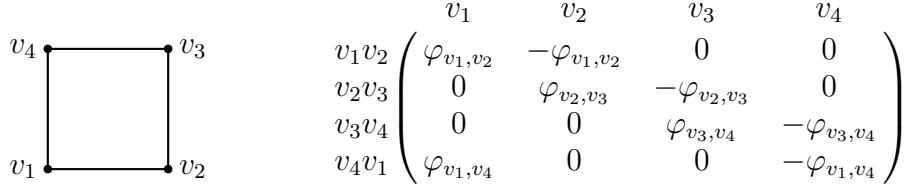
\begin{figure}[h!]
\centering
  \begin{tabular}{  c   }
  
\begin{minipage}{.3\textwidth}
\begin{tikzpicture}
\clip (-2.5,-0.5) rectangle (2.7cm,2.5cm);

\draw [thick](0.2,0.1)--(1.8,0.1);
\draw [thick](0.2,1.7)--(1.8,1.7);
%\draw [fill] (0,2)--(2,0);
\draw [fill] (0.2,0.1) circle [radius=0.05];
\draw [fill] (1.8,0.1) circle [radius=0.05];
\draw [fill] (0.2,1.7) circle [radius=0.05];
\draw [fill] (1.8,1.7) circle [radius=0.05];
\draw [thick](0.2,0.1)--(0.2,1.7);
\draw [thick](1.8,0.1)--(1.8,1.7);
\node [left] at (0.2,0.1) {$v_1$};
\node [right] at (1.8,0.1) {$v_2$};
\node [left] at (0.2,1.7) {$v_4$};
\node [right] at (1.8,1.7) {$v_3$};
\end{tikzpicture}
\end{minipage}
  
\begin{minipage}{.7\textwidth}
\begin{eqnarray*}\bordermatrix{ 
& v_1 & v_2 & v_3 & v_4 \cr
v_1v_2  & \varphi_{v_1,v_2} & -\varphi_{v_1,v_2} & 0 & 0 \cr
v_2v_3 & 0 & \varphi_{v_2,v_3} & -\varphi_{v_2,v_3} & 0 \cr
v_3v_4 & 0 & 0 & \varphi_{v_3,v_4} & -\varphi_{v_3,v_4} \cr
v_4v_1 & \varphi_{v_1,v_4} & 0 & 0 & -\varphi_{v_1,v_4} \cr}\end{eqnarray*}
\vspace{4mm}
\end{minipage}
\end{tabular}
\caption{A 4-cycle (left) and coboundary matrix (right).}\label{fig:framework}
\end{figure}
\end{example}

Note that a coboundary matrix gives rise to the linear map,
\[C(G,\varphi):X^V\to Y^E,\quad (x_v)_{v\in V}
\mapsto \left(\varphi_{v,w}(x_v-x_w)\right)_{vw\in E}.\]
We recall the following result.

\begin{proposition}{\cite[Corollary 2.9]{kas-kit-pow}}.
\label{p:bounded}
Let $(G,\varphi)$ be a framework for $X$ and $Y$.
If $G$ is a countably infinite graph with bounded degree then the following statements are equivalent.
\begin{enumerate}[(i)]
\item $\sup_{vw\in E} \,\|\varphi_{v,w}\|_{op}<\infty$.
\item $C(G,\varphi)\in B(\ell^p(V,X),\ell^p(E,Y))$, for all $p\in [1,\infty]$.
\item $C(G,\varphi)\in B(\ell^p(V,X),\ell^p(E,Y))$, for some $p\in [1,\infty]$.
\end{enumerate}
\end{proposition}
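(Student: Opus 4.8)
The plan is to establish the cyclic chain of implications (i) $\Rightarrow$ (ii) $\Rightarrow$ (iii) $\Rightarrow$ (i). The middle implication is immediate, since an assertion holding for all $p$ certainly holds for some $p$, so the real work sits in the first and last implications. Throughout I would abbreviate $C=C(G,\varphi)$ and write $M=\sup_{vw\in E}\|\varphi_{v,w}\|_{op}$ and $\Delta=\sup_{v\in V}\deg(v)$, the latter finite by the bounded degree hypothesis.

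For (i) $\Rightarrow$ (ii), assume $M<\infty$ and fix $x=(x_v)_{v\in V}\in\ell^p(V,X)$. The endpoint $p=\infty$ is handled directly: for each edge $vw\in E$ one has $\|\varphi_{v,w}(x_v-x_w)\|\le M(\|x_v\|+\|x_w\|)\le 2M\|x\|_\infty$, so $\|Cx\|_\infty\le 2M\|x\|_\infty$ with no appeal to the degree bound. For finite $p$ I would combine the operator-norm estimate with the elementary convexity inequality $(a+b)^p\le 2^{p-1}(a^p+b^p)$ to obtain $\|\varphi_{v,w}(x_v-x_w)\|^p\le M^p 2^{p-1}(\|x_v\|^p+\|x_w\|^p)$ for each edge. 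Summing over $E$ and reorganising the result by vertices gives the key identity $\sum_{vw\in E}(\|x_v\|^p+\|x_w\|^p)=\sum_{v\in V}\deg(v)\,\|x_v\|^p\le\Delta\,\|x\|_p^p$, whence $\|Cx\|_p\le 2M\Delta^{1/p}\|x\|_p$. This is exactly where the bounded degree hypothesis enters: it converts the edge-indexed double count into a controlled vertex sum.

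For (iii) $\Rightarrow$ (i), suppose $C\in B(\ell^p(V,X),\ell^p(E,Y))$ for some fixed $p$. The idea is to probe each coordinate map $\varphi_{v,w}$ with a single-vertex test vector. Given an edge $vw\in E$ and a unit vector $\xi\in X$, let $x\in\ell^p(V,X)$ take the value $\xi$ at $v$ and $0$ elsewhere; then $\|x\|_p=1$, while the $vw$-entry of $Cx$ is $\varphi_{v,w}(\xi)$, so $\|Cx\|_p\ge\|\varphi_{v,w}(\xi)\|$, since a single coordinate is dominated by the full $\ell^p$ norm for every $p\in[1,\infty]$. Taking the supremum over unit vectors $\xi$ yields $\|\varphi_{v,w}\|_{op}\le\|C\|$, and as the edge was arbitrary, $M\le\|C\|<\infty$. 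This direction requires no hypothesis on the degrees.

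The individual estimates are routine, so I do not anticipate a serious obstacle; the one point deserving care is the differing roles of the two hypotheses. Bounded degree is genuinely needed for (i) $\Rightarrow$ (ii) at finite $p$ (without it a uniformly norm-bounded family $(\varphi_{v,w})$ can fail to induce a bounded $\ell^p$ operator), whereas it plays no role in the reverse implication. I would take care that the write-up flags this asymmetry and confirms that the single-coordinate lower bound used in (iii) $\Rightarrow$ (i) is valid uniformly across all $p\in[1,\infty]$, including the endpoint $p=\infty$ where $\|x\|_\infty=\|\xi\|=1$ and $\|Cx\|_\infty\ge\|\varphi_{v,w}(\xi)\|$.
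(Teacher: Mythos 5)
Your proof is correct. Note that the paper itself does not prove this proposition --- it is recalled verbatim from \cite[Corollary 2.9]{kas-kit-pow} --- but your argument is the standard one for such statements and is complete: the $\ell^\infty$ bound and the single-coordinate lower bound in (iii) $\Rightarrow$ (i) need no degree hypothesis, while the convexity estimate together with the double count $\sum_{vw\in E}(\|x_v\|^p+\|x_w\|^p)=\sum_{v\in V}\deg(v)\|x_v\|^p$ correctly isolates where bounded degree is used for finite $p$.
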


\subsection{Gain graphs}
Let $\Gamma$ be an additive group with identity element $0$.
A {\em $\Gamma$-symmetric graph} is a pair $(G,\theta)$ where $G=(V,E)$ is a simple undirected graph with automorphism group $\Aut(G)$ and $\theta:\Gamma\to\textrm{Aut}(G)$ is a group homomorphism.
For convenience, we suppress $\theta$ and write $\gamma v$ instead of $\theta(\gamma)v$ for each group element $\gamma\in\Gamma$ and each vertex $v\in V$. We also write $\gamma e$ instead of $(\gamma v)(\gamma w)$ for each $\gamma\in\Gamma$ and each edge $e=vw\in E$.
The {\em orbit} of a vertex $v\in V$ (respectively, an edge $e\in E$) under $\theta$ is the set 
$[v]=\{\gamma v:\gamma\in \Gamma\}$ (respectively, $[e]=\{\gamma e:\gamma\in \Gamma\}$). 
We denote by $V_0$ the set of all vertex orbits and by $E_0$ the set of all edge orbits. 

We will assume throughout   that $\theta$ acts {\em freely} on the vertices and edges of $G$. This means  $\gamma v \neq v$ and $\gamma e \neq e$ for all $\gamma\in\Gamma \backslash \{0\}$ and for all vertices $v\in V$ and edges $e\in E$.
We will also assume that $V_0$ and $E_0$ are finite sets.

\begin{lemma}
\label{l:degree}
Let $(G,\theta)$ be a $\Gamma$-symmetric graph where $\theta$ acts freely on the vertices and edges of $G$ and $E_0$ is finite.
Then $G$ has bounded degree.
\end{lemma}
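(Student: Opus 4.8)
The plan is to prove the uniform bound $\deg(v)\le 2|E_0|$ directly, for \emph{every} vertex $v\in V$; since $E_0$ is finite this immediately yields $\sup_{v\in V}\deg(v)<\infty$. The structural facts I will use are that each $\theta(\gamma)$ is a graph automorphism (so it sends edges to edges and preserves incidence) together with the two freeness hypotheses.

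First I would fix, once and for all, a representative edge $e_o=a_ob_o\in E$ for each edge orbit $o\in E_0$. Now fix an arbitrary vertex $v$ and consider the set $\{e\in E: v\in e\}$ of edges incident to $v$, whose cardinality is $\deg(v)$. Each such edge $e$ lies in a unique orbit $o=[e]\in E_0$, and because $\theta$ acts freely on edges there is a \emph{unique} $\gamma_e\in\Gamma$ with $\gamma_e e_o=e$. Writing $e=\gamma_e e_o=(\gamma_e a_o)(\gamma_e b_o)$, incidence of $e$ with $v$ forces $v\in\{\gamma_e a_o,\gamma_e b_o\}$, and since $G$ is simple ($a_o\neq b_o$) exactly one of these equalities holds; record this choice as a label $\ell(e)\in\{1,2\}$.

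The heart of the argument is then to show that the resulting map
\[
\Psi:\{e\in E: v\in e\}\longrightarrow E_0\times\{1,2\},\qquad e\mapsto\bigl([e],\ell(e)\bigr),
\]
is injective. Indeed, if $\Psi(e)=\Psi(e')$ then $e,e'$ share the same orbit $o$ and the same label, say $\gamma_e a_o=v=\gamma_{e'}a_o$; freeness of the action \emph{on vertices} then gives $\gamma_e=\gamma_{e'}$, whence $e=\gamma_e e_o=\gamma_{e'}e_o=e'$. Injectivity of $\Psi$ yields $\deg(v)\le |E_0\times\{1,2\}|=2|E_0|$, and as this bound is independent of $v$ the proof is complete.

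I do not expect a serious obstacle here: the content is a counting argument, and the only points requiring care are (a) using free action on edges to make $\gamma_e$ well defined and free action on vertices to force the collision $\gamma_e=\gamma_{e'}$, and (b) checking that simplicity of $G$ guarantees the label $\ell(e)$ is unambiguous (an edge cannot meet $v$ at both endpoints). The one thing worth emphasising is that the argument delivers a bound \emph{uniform} in $v$, which is exactly what bounded degree requires, rather than merely finiteness of each individual degree.
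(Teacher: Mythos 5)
Your proof is correct and follows essentially the same route as the paper's: both establish the uniform bound $\deg(v)\le 2|E_0|$ by showing that each edge orbit contributes at most two edges incident to a given vertex, with freeness of the action on the vertices doing the real work. The only difference is presentational — you package the count as an explicit injection into $E_0\times\{1,2\}$, whereas the paper argues by contradiction that three distinct incident edges cannot lie in a single orbit.
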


\proof
Let $v\in V$ and suppose $vw_1,vw_2,vw_3\in E$ are distinct edges which belong to the same edge orbit.
Then $vw_2=\gamma(vw_1)$ for some $\gamma\in\Gamma\backslash\{0\}$.
Since $\theta$ acts freely on $V$ it follows that $w_2=\gamma v$.
Note that $vw_3=\gamma'(vw_2)$ for some $\gamma'\in\Gamma\backslash\{0\}$.
Again, since $\theta$ acts freely on $V$ it follows that $v=\gamma'w_2=(\gamma'\gamma) v$.
Thus $\gamma'=-\gamma$ and so $vw_1=-\gamma(vw_2)=\gamma'(vw_2)=vw_3$, a contradiction. We conclude that each edge orbit contains at most two edges which are incident with $v$.
Thus $v$ has at most $2|E_0|$ incident edges.
\endproof

The {\em quotient graph} $G_0$ is the multigraph with vertex set $V_0$, edge set $E_0$ and incidence relation satisfying $[e] = [v][w]$ if some (equivalently, every) edge in $[e]$ is incident with a vertex in $[v]$ and a vertex in $[w]$. For each vertex orbit $[v]\in V_0$, choose a representative vertex $\tilde{v}\in[v]$ and denote the set of all such representatives by $\tilde{V}_0$.
Now fix an orientation on the edges of the quotient graph $G_0$ so that each edge in $G_0$ is an ordered pair $[e]=([v],[w])$. Then for each directed edge $[e]=([v],[w])$ there exists a unique group element $\gamma\in\Gamma$ such that $\tilde{v}(\gamma\tilde{w})\in [e]$. This group element is referred to as the {\em gain} on the directed edge $[e]$ and is denoted $\psi_{[e]}$.
A {\em gain graph} for the $\Gamma$-symmetric graph $(G,\theta)$ is any edge-labelled directed multigraph obtained from the quotient graph $G_0$ in this way.

\begin{example}
\label{ex:symgraph}
Consider again the 4-cycle $G=(V,E)$ with vertex set $V=\{v_1,v_2,v_3,v_4\}$ and edge set $E=\{v_1v_2,v_2v_3,v_3v_4,v_4v_1\}$. Let 
$\theta:\bZ_2\to \Aut(G)$ be the group homomorphism with $\theta(1)v_1=v_3$ and $\theta(1)v_2=v_4$.
The $\bZ_2$-symmetric graph $(G,\theta)$ has two distinct vertex orbits
$[v_1]=\{v_1,v_3\}$ and $[v_2]=\{v_2,v_4\}$, and two distinct edge orbits $[v_1v_2]=\{v_1v_2,v_3v_4\}$ and $[v_1v_3]=\{v_1v_3,v_2v_4\}$.
A gain graph for $(G,\theta)$ is illustrated in Figure \ref{fig:symgraph}. 
\end{example}

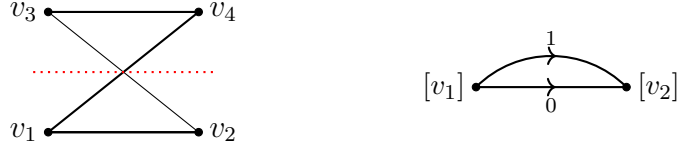
\begin{figure}[h!]
\centering
  \begin{tabular}{  c   }
  
\begin{minipage}{.5\textwidth}
\begin{tikzpicture}
\clip (-4.5,-0.4) rectangle (2.7cm,2.5cm);

\draw [thick](0,0.2)--(2,0.2);
\draw [thick](0,1.8)--(2,1.8);
\draw [fill] (0,1.8)--(2,0.2);
\draw [fill] (0,0.2) circle [radius=0.05];
\draw [fill] (2,0.2) circle [radius=0.05];
\draw [fill] (0,1.8) circle [radius=0.05];
\draw [fill] (2,1.8) circle [radius=0.05];
\draw [thick](0,0.2)--(2,1.8);
\draw [thick](0,0.2)--(2,0.2);
\node [left] at (0,0.2) {$v_1$};
\node [right] at (2,0.2) {$v_2$};
\node [left] at (0,1.8) {$v_3$};
\node [right] at (2,1.8) {$v_4$};

\draw [thick,dotted,red](-0.2,1)--(2.2,1);
\end{tikzpicture}
\end{minipage}

\begin{minipage}{.5\textwidth}
\begin{tikzpicture}
\clip (-2,-0.4) rectangle (2.7cm,2.5cm);

\draw [thick,->-] (0,0.8)--(2,0.8) node[midway,below] {\tiny{$0$}};;
\draw [thick,->-] (0,0.8)to [out=45,in=135] node[above] {\tiny{$1$}}(2,0.8);
\draw [fill] (0,0.8) circle [radius=0.05];
\draw [fill] (2,0.8) circle [radius=0.05];
\node [left] at (0,0.8) {\small{$[v_1]$}};
\node [right] at (2,0.8) {\small{$[v_2]$}};
\end{tikzpicture}
\end{minipage}

\end{tabular}
\caption{A $\bZ_2$-symmetric graph (left) and gain graph (right).}\label{fig:symgraph}
\end{figure}

For each directed edge $[e]=([v],[w])$ in the gain graph with gain $\gamma$  we choose $\tilde{e} = \tilde{v}(\gamma\tilde{w})\in E$ to be the representative edge for the edge orbit $[e]$. The set of all such representative edges will be denoted $\tilde{E}_0$.
Note that since $\theta$ acts freely on the vertex set $V$ and edge set $E$ we have natural bijections, 
\[\beta_V:\Gamma\times V_0\to V, \quad (\gamma,[v])\mapsto \gamma\tilde{v},\quad\mbox{ and, } \quad
\beta_E:\Gamma\times E_0\to E, \quad (\gamma,[e])\mapsto \gamma\tilde{e}.\]
For more on gain graphs we refer the reader to \cite{jkt}.

\subsection{Symmetric frameworks}
Let $\Gamma$ be a discrete abelian group and denote by $\Isom(X)$ the group of affine isometries of $X$.
A {\em $\Gamma$-symmetric   framework} is a tuple $\G=(G,\varphi,\theta,\tau)$ where $\tau:\Gamma\rightarrow \Isom(X)$ is a group homomorphism,
$(G,\theta)$ is a $\Gamma$-symmetric graph and $(G,\varphi)$ is a framework for $X$ and $Y$ with the property that, 
\[\varphi_{\gamma v,\gamma w} = \varphi_{v,w}\circ\tau(-\gamma),\quad \mbox{ for all } \gamma\in \Gamma \mbox{ and all } v,w\in V.\]

For each $\gamma\in\Gamma$, let $d\tau(\gamma)$ denote the linear isometry on $X$ that is uniquely defined by the linear part of the affine isometry $\tau(\gamma)$.
We denote by $\tilde{\tau}:\Gamma\to U(X^{V_0})$ the unitary representation with $\tilde{\tau}(\gamma)(x) = (d\tau(\gamma)x_{[v]})_{[v]\in V_0}$ for all $x=(x_{[v]})_{[v]\in V_0}\in X^{V_0}$.

Given a vector $z=(z_v)_{v\in V}\in X^V$ we will write 
$z_e=z_v-z_w$ for each edge $e=vw\in E$ where the corresponding directed edge 
$[e]$ in the gain graph is directed from $[v]$ to $[w]$. We will also write $\varphi_e=\varphi_{v,w}$ for such an edge.

For each $p\in[1,\infty]$, the bijections $\beta_V$ and $\beta_E$ give rise to isometric isomorphisms,
\[S_V:  \ell^p(V,X)\to \ell^p(\Gamma,X^{V_0}), \quad z=(z_v)_{v\in V}\mapsto (S_V(z)_\gamma)_{\gamma\in \Gamma},\]
where $S_V(z)_\gamma = (z_{\gamma\tilde{v}})_{[v]\in V_0}$, and,
\[S_E: \ell^p(E,Y) \to \ell^p(\Gamma,Y^{E_0}), \quad z=(z_e)_{e\in E}\mapsto (S_E(z)_\gamma)_{\gamma\in \Gamma},\]
where $S_E(z)_\gamma = (z_{\gamma\tilde{e}})_{[e]\in E_0}$.
We define the bounded operator, 
\[\tilde{C}(G,\varphi):=S_E\circ C(G,\varphi)\circ S_V^{-1}:\ell^p(\Gamma,X^{V_0})\to \ell^p(\Gamma,Y^{E_0}).\] 

For each $p\in[1,\infty]$ and each $\gamma\in \Gamma$,
we have an associated pair of isometric isomorphisms $U_{\gamma,\tilde{\tau}}\in B(\ell^p(\Gamma,X^{V_0}))$ and 
$W_{\gamma}\in B(\ell^p(\Gamma,Y^{E_0}))$ where,
\[(U_{\gamma,\tilde{\tau}}f)(\gamma')=\tilde{\tau}(\gamma)f(\gamma'-\gamma),\quad \forall\,f\in \ell^p(\Gamma,X^{V_0}),\]
\[(W_{\gamma}g)(\gamma')=g(\gamma'-\gamma),\quad \forall\,g\in \ell^p(\Gamma,Y^{E_0}).\]
 
\begin{proposition}
\label{p:rigidityoperator}
Let $\G=(G,\varphi,\theta,\tau)$ be a $\Gamma$-symmetric framework for $X$ and $Y$. Then, for all $\gamma\in \Gamma$,
 \[W_{\gamma}\circ \tilde{C}(G,\varphi)=\tilde{C}(G,\varphi)\circ U_{\gamma,\tilde{\tau}}.\] 
\end{proposition}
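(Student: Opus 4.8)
The plan is to conjugate the claimed relation on the unfolded spaces $\ell^p(\Gamma,X^{V_0})$ and $\ell^p(\Gamma,Y^{E_0})$ back to the coordinate spaces $\ell^p(V,X)$ and $\ell^p(E,Y)$ via the isometric isomorphisms $S_V$ and $S_E$, where it collapses to a transparent identity involving $C(G,\varphi)$ and the symmetry hypothesis. Since $\tilde{C}(G,\varphi)=S_E\circ C(G,\varphi)\circ S_V^{-1}$, the relation $W_{\gamma}\tilde C=\tilde C\,U_{\gamma,\tilde\tau}$ is equivalent, after multiplying on the left by $S_E^{-1}$ and on the right by $S_V$, to
\[
(S_E^{-1}W_{\gamma}S_E)\circ C(G,\varphi)=C(G,\varphi)\circ(S_V^{-1}U_{\gamma,\tilde\tau}S_V).
\]
So the first task is to compute the two conjugated operators explicitly, and the second is to verify this single identity. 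The argument is identical for every $p\in[1,\infty]$, since all the maps act by reindexing and by applying the fixed linear maps $d\tau(\gamma)$ and $\varphi_{v,w}$.

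First I would identify the conjugated shifts. Unwinding the definitions of $S_E$ and $W_{\gamma}$, using the bijection $\beta_E$ to write a general edge as $\delta\tilde e$ and the fact that $\theta$ is a homomorphism into $\Aut(G)$ (so $(-\gamma)(\delta\tilde e)=(\delta-\gamma)\tilde e$), one finds that $S_E^{-1}W_{\gamma}S_E$ is the translation operator $z\mapsto(z_{(-\gamma)e})_{e\in E}$ on $\ell^p(E,Y)$. The analogous computation for $S_V$ and $U_{\gamma,\tilde\tau}$, now carrying the twist $\tilde\tau(\gamma)(x)=(d\tau(\gamma)x_{[v]})_{[v]}$ through $S_V^{-1}$, shows that $S_V^{-1}U_{\gamma,\tilde\tau}S_V$ is the twisted translation $z\mapsto(d\tau(\gamma)z_{(-\gamma)v})_{v\in V}$ on $\ell^p(V,X)$. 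I will denote these operators by $\rho^E_\gamma$ and $\rho^V_\gamma$.

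It then remains to check $\rho^E_\gamma\circ C(G,\varphi)=C(G,\varphi)\circ\rho^V_\gamma$ on an arbitrary $z\in\ell^p(V,X)$. Fixing an oriented edge $e=vw$, the left-hand side reads off the $(-\gamma)e=(-\gamma v)(-\gamma w)$ component of $C(G,\varphi)z$, namely $\varphi_{-\gamma v,-\gamma w}(z_{-\gamma v}-z_{-\gamma w})$, whereas the right-hand side gives $\varphi_{v,w}\,d\tau(\gamma)(z_{-\gamma v}-z_{-\gamma w})$. Equality therefore follows from $\varphi_{-\gamma v,-\gamma w}=\varphi_{v,w}\circ d\tau(\gamma)$, which I obtain from the defining symmetry relation $\varphi_{\gamma v,\gamma w}=\varphi_{v,w}\circ\tau(-\gamma)$ by replacing $\gamma$ with $-\gamma$. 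Here I would flag the one point that deserves care: $\tau(\gamma)$ is an affine isometry, but since $\varphi_{-\gamma v,-\gamma w}$ is linear the composite $\varphi_{v,w}\circ\tau(\gamma)$ is forced to be linear and hence agrees with its linear part $\varphi_{v,w}\circ d\tau(\gamma)$ (equivalently, the coboundary map only ever sees the difference $z_{-\gamma v}-z_{-\gamma w}$, on which the translation part of $\tau(\gamma)$ cancels). This is precisely why $d\tau$, rather than $\tau$, is the object appearing in $\tilde\tau$.

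The algebra is short; the main obstacle is organisational rather than conceptual. It lies in keeping the index bookkeeping through $S_V$, $S_E$ and the free $\Gamma$-action scrupulously consistent — in particular the signs in $(-\gamma)e$ and $(-\gamma)v$, and the correct placement of the twist $d\tau(\gamma)$ relative to the shift after conjugation by $S_V$ — together with the affine-versus-linear reconciliation in the final step.
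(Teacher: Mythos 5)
Your proposal is correct and follows essentially the same route as the paper's proof: both reduce the claim to the identity $\varphi_{-\gamma v,-\gamma w}=\varphi_{v,w}\circ d\tau(\gamma)$ by unwinding $S_V$, $S_E$ and the shift operators, the only difference being that you conjugate the operator identity down to $\ell^p(V,X)$ and $\ell^p(E,Y)$, whereas the paper applies both sides to a test vector in $\ell^p(\Gamma,X^{V_0})$ and pushes it through $S_V^{-1}$. Your explicit reconciliation of the affine isometry $\tau(\gamma)$ with its linear part $d\tau(\gamma)$ is a point the paper glosses over, and you handle it correctly.
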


\begin{proof}
Let $\gamma\in \Gamma$ and let $f\in \ell^p(\Gamma, X^{V_0})$. 
Then $f=S_V(u)$ where $u=(u_v)_{v\in V}\in\ell^p(V,X)$ has components
$u_v=f(\gamma')_{[v]}$ for  $v=\beta_V(\gamma',[v])$.
We have,
\[\tilde{C}(G,\varphi)(f) = S_E\circ C(G,\varphi)\circ S_V^{-1} (f) 
= S_E\left(\varphi_{v,w}(u_v-u_w)\right)_{vw\in E}
= g,\]
where $g\in \ell^p(\Gamma, Y^{E_0})$ satisfies 
$g(\gamma') =( \varphi_{\gamma'\tilde{e}}(u_{\gamma' \tilde{e}}) )_{[e]\in E_0}$ for each $\gamma'\in \Gamma$.
Note that,
\[W_{\gamma}(g)(\gamma') = 
( \varphi_{(\gamma'-\gamma)\tilde{e}}(u_{(\gamma'-\gamma) \tilde{e}}) )_{[e]\in E_0}, \quad \mbox{ for each }\gamma'\in \Gamma.\]

Let $h = U_{\gamma,\tilde{\tau}} (f)$.
Then $h\in \ell^p(\Gamma, X^{V_0})$ and 
$h(\gamma') = \tilde{\tau}(\gamma)f(\gamma'-\gamma)$ for each $\gamma'\in \Gamma$.
Also, if $v=\beta_V(\gamma',[v])$ then,
\[h(\gamma')_{[v]} = d\tau(\gamma)f(\gamma'-\gamma)_{[v]} 
= d\tau(\gamma)u_{(\gamma'-\gamma)\tilde{v}} 
= d\tau(\gamma)u_{-\gamma v}.\]
Thus $h=S_V(z)$ where $z=(z_v)_{v\in V}\in\ell^p(V,X)$ has components
$z_v = d\tau(\gamma)u_{-\gamma v}$ for all $v\in V$.
We conclude that,
\[(\tilde{C}(G,\varphi)\circ U_{\gamma,\tilde{\tau}}) f 
= S_E\circ C(G,\varphi)\circ S_V^{-1}(h) 
= S_E\left(\varphi_e(z_e)\right)_{e\in E}
= \tilde{g},\]
where $\tilde{g}\in \ell^p(\Gamma, Y^{E_0})$ satisfies
$\tilde{g}(\gamma') =( \varphi_{\gamma'\tilde{e}}(z_{\gamma' \tilde{e}}) )_{[e]\in E_0}$ for each $\gamma'\in \Gamma$.
It remains to show that $W_{\gamma}(g)=\tilde{g}$. To see this, note that for each $[e]\in E_0$ and each $\gamma'\in \Gamma$ we have,
\[ \varphi_{\gamma'\tilde{e}}(z_{\gamma' \tilde{e}})
= \varphi_{\gamma'\tilde{e}}(d\tau(\gamma)u_{(\gamma'-\gamma)\tilde{e}}) 
= \varphi_{\gamma'\tilde{e}}(\tau(\gamma)u_{(\gamma'-\gamma)\tilde{e}}) \\
= \varphi_{(\gamma'-\gamma)\tilde{e}}(u_{(\gamma'-\gamma)\tilde{e}}).
\]
\endproof

For each $p\in [1,\infty]$, the unitary representation $\tilde{\tau}:\Gamma\to U(X^{V_0})$ defined above gives rise to an isometric isomorphism  
$T_{\tilde{\tau}}\in B(\ell^p(\Gamma,X^{V_0}))$ where, 
\[(T_{\tilde{\tau}} f)(\gamma)=\tilde{\tau}(-\gamma)f(\gamma),\quad \forall\,f\in \ell^p(\Gamma,X^{V_0}).\]

\begin{theorem}
\label{t:rigidityoperator}
Let $\G=(G,\varphi,\theta,\tau)$ be a $\Gamma$-symmetric framework for  $X$ and $Y$ where $G$ has a finite or a countably infinite vertex set, $\Gamma$ is a discrete abelian group, $\theta$ acts freely on the vertices and edges of $G$ and $V_0$ and $E_0$ are finite sets. 

Then $C(G,\varphi)\in B(\ell^2(V,X),\ell^2(E,Y))$ and,  
\[C(G,\varphi)=S_E^{-1}\circ F_{Y^{E_0}}^{-1}\circ M_\Phi \circ F_{X^{V_0}} \circ T_{\tilde{\tau}} \circ S_V,\] 
for some $\Phi\in L^\infty(\hat{\Gamma},B(X^{V_0},Y^{E_0}))$.
\end{theorem}

\proof
By Lemma \ref{l:degree}, $G$ has bounded degree.
Note that  $\varphi$ satisfies Proposition \ref{p:bounded}$(i)$ and so $C(G,\varphi)\in B(\ell^2(V,X),\ell^2(E,Y))$.
The result now follows  from Theorem \ref{t:intertwiningtwist} and Proposition \ref{p:rigidityoperator}.
\endproof

We refer to $\Phi$ in the above theorem as the {\em symbol function} for the symmetric framework $\G$. 

\subsection{The symbol function}
Let $\G=(G,\varphi,\theta,\tau)$ be a $\Gamma$-symmetric   framework for $X$ and $Y$ where $\Gamma$ is a discrete abelian group. 
Fix a gain graph for the $\Gamma$-symmetric graph $(G,\theta)$ and let $\chi\in\hat{\Gamma}$. A {\em $\chi$-orbit matrix} for $\G$ is a matrix $O_\G(\chi)$ with rows indexed by the directed edges of the gain graph and with columns indexed by 
$V_0$. 
The row entries for a non-loop directed edge $([v],[w])\in E_0$ with gain $\gamma\in \Gamma$ are as follows,
\begin{eqnarray*}\bordermatrix{ & &   & [v] & & & & [w] & &    \cr
& \cdots & 0 & \varphi_{\tilde{v},\gamma\tilde{w}} & 0 & \cdots & 0 & \chi(\gamma)\varphi_{\tilde{w}, -\gamma\tilde{v}}  & 0 & \cdots \cr}.\end{eqnarray*}
The row entries for a loop edge $([v],[v])\in E_0$ with gain $\gamma\in \Gamma$ are as follows,
\begin{eqnarray*}\bordermatrix{ & &   & [v] & &    \cr
& \cdots & 0 & \varphi_{\tilde{v},\gamma\tilde{v}}+\chi(\gamma)\varphi_{\tilde{v},-\gamma\tilde{v}} & 0 & \cdots \cr}.\end{eqnarray*}
Note that each orbit matrix gives rise in  natural way to a linear map $O_\G(\chi):X^{V_0}\to Y^{E_0}$ and that the function $O_\G:\hat{\Gamma}\to B(X^{V_0},Y^{E_0})$, $\chi\mapsto O_\G(\chi)$, is continuous. 
In particular, $O_\G\in L^\infty(\hat{\Gamma},B(X^{V_0},Y^{E_0}))$ is the operator-valued symbol function for a multiplication operator $M_{O_\G}\in B(L^2(\hat{\Gamma},X^{V_0}),L^2(\hat{\Gamma},Y^{E_0}))$.

We now show that $O_\G$ is the symbol function for the  symmetric framework $\G$.

\begin{theorem}
\label{t:symbol}
Let $\G=(G,\varphi,\theta,\tau)$ be a $\Gamma$-symmetric framework with  symbol function $\Phi\in L^\infty(\hat{\Gamma},B(X^{V_0},Y^{E_0}))$.
Then, 
\[\Phi(\chi)=O_\G(\chi),\quad \mbox{ a.e.~ }\chi\in\hat{\Gamma}.\]
\end{theorem}

\proof
Let $\hat{f}\in L^2(\hat{\Gamma}, X^{V_0})$ and let 
$f = F_{X^{V_0}}^{-1}(\hat{f})\in \ell^2(\Gamma, X^{V_0})$. 
Note that $(T_{\tilde{\tau}}^{-1}f)(\gamma) = \tilde{\tau}(\gamma)f(\gamma)$.
Thus $T_{\tilde{\tau}}^{-1}(f)=S_V(z)$ where $z=(z_v)_{v\in V}\in\ell^2(V,X)$ has components $z_v=(\tilde{\tau}(\gamma)f(\gamma))_{[v]}$
for  $v=\beta_V(\gamma,[v])$.
Now,
\[\tilde{C}(G,\varphi)\circ T_{\tilde{\tau}}^{-1}(f) 
= S_E\circ C(G,\varphi)\circ S_V^{-1} \circ T_{\tilde{\tau}}^{-1}(f) 
= S_E\left(\varphi_e(z_e)\right)_{e\in E}
= g,\]
where $g\in \ell^2(\Gamma, Y^{E_0})$ satisfies
$g(\gamma) =( \varphi_{\gamma\tilde{e}}(z_{\gamma \tilde{e}}) )_{[e]\in E_0}$  for each $\gamma\in \Gamma$.

Let  $[e]=([v],[w])\in E_0$ be a directed edge with gain $\gamma\in \Gamma$ and let $g_{[e]}\in \ell^2(\Gamma,Y)$ be the $[e]$-component of $g$.
% $g_{[e]}(\gamma')=\varphi_{\gamma'\tilde{e}}(z_{\gamma' \tilde{e}})$ 
Note that for each $\gamma'\in\Gamma$,
\begin{eqnarray*}
g_{[e]}(\gamma')
&=& \varphi_{\gamma\tilde{e}}(z_{\gamma \tilde{e}}) \\
&=& \varphi_{\gamma'\tilde{e}}(d\tau(\gamma')f(\gamma')_{[v]}
-d\tau(\gamma'+\gamma)f(\gamma'+\gamma)_{[w]}) \\
%&=& \varphi_{\tilde{e}}(f(\gamma')_{[v]}
%-d\tau(\gamma)f(\gamma'+\gamma)_{[w]}) \\
%&=& \varphi_{\tilde{e}}(f(\gamma)_{[v]})
%-d\tau(-\gamma')\varphi_{\tilde{e}}(f(\gamma+\gamma')_{[w]}) \\
&=& \varphi_{\tilde{e}}(f(\gamma')_{[v]}
-d\tau(\gamma)((U_{-\gamma}f)(\gamma')_{[w]})).
\end{eqnarray*}
Also, by Proposition \ref{p:multiplication}, for almost every $\chi\in\hat{\Gamma}$,
\[\widehat{U_{-\gamma}f}(\chi) 
%=(M_{\delta_{-\gamma}}^*\hat{f})(\chi) 
=\overline{\delta_{-\gamma}(\chi)}\hat{f}(\chi)  
=\overline{\chi(-\gamma)}\hat{f}(\chi) 
=\chi(\gamma)\hat{f}(\chi),\]
and so,
%\begin{eqnarray*}
\[\hat{g}_{[e]}(\chi) 
%&=& \varphi_{\tilde{e}}(\hat{f}(\chi)_{[v]})-\varphi_{\tilde{e}}\circ d\tau(\gamma)((F_{Y^{E_0}} U_{-\gamma}f)(\chi)_{[w]}) \\
= \varphi_{\tilde{e}}(\hat{f}(\chi)_{[v]}
-d\tau(\gamma)(\chi(\gamma)\hat{f}(\chi)_{[w]})) \\
= \varphi_{\tilde{v},\gamma\tilde{w}}(\hat{f}(\chi)_{[v]})
+\chi(\gamma)\varphi_{\tilde{w},-\gamma\tilde{v}}(\hat{f}(\chi)_{[w]}).\]
%\end{eqnarray*}
Thus, for almost every $\chi\in\hat{\Gamma}$,
\[%\Phi(\chi)\hat{f}(\chi) = 
(M_\Phi\hat{f})(\chi)
=(F_{Y^{E_0}}\circ \tilde{C}(G,\varphi) \circ T_{\tilde{\tau}}^{-1} f)(\chi)
=\hat{g}(\chi) = O_\G(\chi)\hat{f}(\chi).\]
%Thus, for each $\chi\in\hat{\Gamma}$, $\Phi(\chi) = O_\G(\chi)$.
%Note that the entries of $O_\G(\chi)$ are continuous functions on $\hat{\Gamma}$ and so $\Phi$ is continuous on $\hat{\Gamma}$.
\end{proof}

\begin{corollary}
\label{c:block}
Let $\G=(G,\varphi,\theta,\tau)$ be a $\Gamma$-symmetric framework
with  symbol function $\Phi$. If $G$ is a finite graph then the coboundary matrix $C(G,\varphi)$ is equivalent to the direct sum, 
\[\bigoplus_{\chi\in\hat{\Gamma}} O_\G(\chi):\bigoplus_{\chi\in\hat{\Gamma}} X^{V_0}\to \bigoplus_{\chi\in\hat{\Gamma}} Y^{E_0}.\]
\end{corollary}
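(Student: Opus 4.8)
The plan is to read the conclusion off the factorisation of Theorem~\ref{t:rigidityoperator} together with the identification $\Phi=O_\G$ supplied by Theorem~\ref{t:symbol}, after first observing that finiteness of $G$ forces $\Gamma$ to be finite.

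First I would record this reduction. Since $\theta$ acts freely on $V$, the map $\beta_V\colon\Gamma\times V_0\to V$ is a bijection, so $|V|=|\Gamma|\,|V_0|$. As $V_0$ is finite and (we may assume) nonempty, finiteness of $G$---and hence of $V$---forces $\Gamma$ to be a finite abelian group. Consequently its Pontryagin dual $\hat{\Gamma}$ is finite of the same cardinality, and the normalised Haar measure on $\hat{\Gamma}$ is a scalar multiple of counting measure.

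The key observation is then that, for the finite discrete space $\hat{\Gamma}$, there is a canonical unitary identification $L^2(\hat{\Gamma},Z)=\bigoplus_{\chi\in\hat{\Gamma}}Z$ for any finite-dimensional Hilbert space $Z$, under which a multiplication operator $M_\Psi$ with symbol $\Psi\in L^\infty(\hat{\Gamma},B(Z_1,Z_2))$ is carried to the block-diagonal operator $\bigoplus_{\chi\in\hat{\Gamma}}\Psi(\chi)$. Applied with $Z_1=X^{V_0}$, $Z_2=Y^{E_0}$ and $\Psi=\Phi$, and using that ``almost everywhere'' coincides with ``everywhere'' for counting measure, Theorem~\ref{t:symbol} yields
\[
M_\Phi=\bigoplus_{\chi\in\hat{\Gamma}}\Phi(\chi)=\bigoplus_{\chi\in\hat{\Gamma}}O_\G(\chi).
\]

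Finally I would invoke the factorisation $C(G,\varphi)=S_E^{-1}\circ F_{Y^{E_0}}^{-1}\circ M_\Phi\circ F_{X^{V_0}}\circ T_{\tilde{\tau}}\circ S_V$ of Theorem~\ref{t:rigidityoperator}. Each of the maps $S_V$, $T_{\tilde{\tau}}$, $F_{X^{V_0}}$, $F_{Y^{E_0}}^{-1}$ and $S_E^{-1}$ is an isometric isomorphism---the $S$'s and $F$'s by construction, and $T_{\tilde{\tau}}$ because each $\tilde{\tau}(-\gamma)$ is unitary---so $C(G,\varphi)$ is unitarily equivalent to $M_\Phi$. Composing with the identification above gives the asserted equivalence with $\bigoplus_{\chi\in\hat{\Gamma}}O_\G(\chi)$. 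I do not anticipate a genuine obstacle: the only point needing care is the passage from the measure-theoretic operator $M_\Phi$ to an honest finite block-diagonal matrix, and this rests entirely on the finiteness of $\Gamma$ (hence of $\hat{\Gamma}$) established at the outset.
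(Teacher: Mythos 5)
Your proposal is correct and follows essentially the same route as the paper's proof: deduce finiteness of $\Gamma$ (hence $\hat{\Gamma}$) from freeness of the action on the finite vertex set, identify $M_\Phi$ with the block-diagonal operator $\bigoplus_{\chi\in\hat{\Gamma}}\Phi(\chi)$, and combine Theorem \ref{t:rigidityoperator} with Theorem \ref{t:symbol}. The only difference is that you spell out the unitary identification $L^2(\hat{\Gamma},Z)\cong\bigoplus_{\chi\in\hat{\Gamma}}Z$ and the ``a.e.\ equals everywhere'' point, which the paper leaves implicit.
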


\proof
By Theorem \ref{t:rigidityoperator}, $C(G,\varphi)$ is equivalent to $M_\Phi$.
Note that since $G$ is a finite graph and $\theta$ acts freely on the vertices and edges of $G$ it follows that $\Gamma$, and hence also $\hat{\Gamma}$, is  finite. Thus, $M_\Phi$ is equivalent to the direct sum $\oplus_{\chi\in\hat{\Gamma}} \Phi(\chi)$. Also, by Theorem \ref{t:symbol}, $\Phi(\chi) = O_\G(\chi)$ for all $\chi\in \hat{\Gamma}$ and so the result follows.
\endproof

\begin{example}
\label{ex:symframework}
Consider again the framework $(G,\varphi)$ in Example \ref{ex:framework} and let  $(G,\theta)$ be the $\bZ_2$-symmetric graph described in Example \ref{ex:symgraph}. 
Let $[e_1]$ be the directed edge in the accompanying gain graph with gain $0$ and let $[e_2]$ be the directed edge with gain $1$. 
Note that the dual group for $\bZ_2$ consists of characters $\chi_0$ and $\chi_1$ which satisfy $\chi_0(1)=1$ and $\chi_1(1)=-1$.
If $\G=(G,\varphi,\theta,\tau)$ is a $\bZ_2$-symmetric framework
then the associated orbit matrices for $\G$ take the following form, 
\begin{eqnarray*}
O_\G(\chi_0) =\bordermatrix{ 
& [v_1] & [v_2] \cr
\left[e_1\right] & \varphi_{\tilde{v}_1,\tilde{v}_2} & -\varphi_{\tilde{v}_1,\tilde{v}_2}\cr
\left[e_2\right] & \varphi_{\tilde{v}_1,\tilde{v}_4}  & \varphi_{\tilde{v}_2,\tilde{v}_3}\cr},
\quad\quad
O_\G(\chi_1)=\bordermatrix{ 
& [v_1] & [v_2] \cr
\left[e_1\right] & \varphi_{\tilde{v}_1,\tilde{v}_2} & -\varphi_{\tilde{v}_1,\tilde{v}_2} \cr
\left[e_2\right] & \varphi_{\tilde{v}_1,\tilde{v}_4}  & -\varphi_{\tilde{v}_2,\tilde{v}_3} \cr}.
\end{eqnarray*}
Applying Corollary \ref{c:block} we obtain the equivalence,
\[C(G,\varphi)\sim \begin{bmatrix} O_\G(\chi_0) & 0 \\
0 & O_\G(\chi_1) \end{bmatrix}.\]
\end{example}

\begin{corollary}\label{cor:FCoef}
Let $\G=(G,\varphi,\theta,\tau)$ be a $\Gamma$-symmetric framework with  symbol function $\Phi=O_\G\in C(\hat{\Gamma},B(X^{V_0},Y^{E_0}))$.
Fix a gain graph for $(G,\theta)$ and let $\Gamma_0\subset\Gamma$ be the finite set of non-zero gains on the edges of this gain graph.
\begin{enumerate}[(i)]
\item $\Phi$ is the operator-valued trigonometric polynomial with, 
\[\Phi(\chi)=\hat{\Phi}(0)+\sum\limits_{\gamma\in \Gamma_0} \hat{\Phi}(\gamma) \chi(\gamma),\quad \forall\,\chi\in\hat{\Gamma}.\] 

\item For each $\gamma\in \Gamma_0$, each $[v]\in V_0$ and each $[e]\in E_0$,
\[\hat{\Phi}(\gamma)_{[e],[v]}= C(G,\varphi)_{\tilde{e},\gamma\tilde{v}}\circ d\tau(\gamma),\]
where $\hat{\Phi}(\gamma)_{[e],[v]}$ is the $([e],[v])$-entry of $\hat{\Phi}(\gamma)$ and  $C(G,\varphi)_{\tilde{e},\gamma\tilde{v}}$ is the $(\tilde{e},\gamma\tilde{v})$-entry of $C(G,\varphi)$. 
\end{enumerate}
\end{corollary}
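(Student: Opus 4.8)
The plan is to establish both parts simultaneously by computing the Fourier coefficients of the symbol function $\Phi=O_\G$ directly from the explicit orbit matrix description in Theorem \ref{t:symbol}. Recall that for a continuous $B(X^{V_0},Y^{E_0})$-valued function $\Phi$ on the compact dual group $\hat{\Gamma}$, the Fourier coefficient at $\gamma\in\Gamma$ is $\hat{\Phi}(\gamma)=\int_{\hat{\Gamma}}\overline{\chi(\gamma)}\,\Phi(\chi)\,d\chi$ (using $\delta_\gamma(\chi)=\chi(\gamma)$ and the Pontryagin identification), and the orthonormality of the characters $\{\chi\mapsto\chi(\gamma)\}_{\gamma\in\Gamma}$ in $L^2(\hat{\Gamma})$ picks out precisely the coefficient of $\chi(\gamma)$ in any trigonometric polynomial. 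So the strategy is to show $\Phi$ is a trigonometric polynomial supported on $\{0\}\cup\Gamma_0$, read off its coefficients, and then identify those coefficients with entries of the coboundary matrix.

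First I would fix the gain graph and examine the entries of $O_\G(\chi)$ from Theorem \ref{t:symbol} one block at a time. For a non-loop directed edge $[e]=([v],[w])\in E_0$ with gain $\gamma\in\Gamma$, the $([e],[v])$-entry is the constant $\varphi_{\tilde{v},\gamma\tilde{w}}$ (independent of $\chi$, hence contributing only to $\hat{\Phi}(0)$), while the $([e],[w])$-entry is $\chi(\gamma)\,\varphi_{\tilde{w},-\gamma\tilde{v}}$, whose sole $\chi$-dependence is the character $\chi(\gamma)$. When $\gamma=0$ this second term is also constant; when $\gamma\ne0$ it contributes exactly to $\hat{\Phi}(\gamma)$. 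The loop case $([v],[v])$ with gain $\gamma$ has the single entry $\varphi_{\tilde{v},\gamma\tilde{v}}+\chi(\gamma)\varphi_{\tilde{v},-\gamma\tilde{v}}$, which splits in the same way. Summing over all directed edges and collecting terms by the character $\chi(\gamma)$ appearing, one sees that $\Phi(\chi)$ is manifestly a finite sum $\hat{\Phi}(0)+\sum_{\gamma\in\Gamma_0}\hat{\Phi}(\gamma)\chi(\gamma)$, where $\Gamma_0$ is the (finite) set of nonzero gains; this gives part (i). Here I would be slightly careful that several edges may share the same nonzero gain, so $\hat{\Phi}(\gamma)$ aggregates all the corresponding $\varphi_{\tilde{w},-\gamma\tilde{v}}$ contributions into the appropriate matrix positions, but since different edges occupy different rows $[e]$ this causes no collision within a single matrix entry.

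For part (ii), I would extract the $([e],[v])$-entry of $\hat{\Phi}(\gamma)$ for a fixed nonzero $\gamma\in\Gamma_0$. From the computation above, the coefficient of $\chi(\gamma)$ in the $[e]$-row is $\varphi_{\tilde{w},-\gamma\tilde{v}}$ sitting in the $[w]$-column (for the directed edge with gain $\gamma$), so I must re-express this intrinsic quantity in the form $C(G,\varphi)_{\tilde{e},\gamma\tilde{v}}\circ d\tau(\gamma)$. The key is the symmetry relation $\varphi_{\gamma v,\gamma w}=\varphi_{v,w}\circ\tau(-\gamma)$ defining a $\Gamma$-symmetric framework, together with the fact that the coboundary matrix entry $C(G,\varphi)_{\tilde{e},\gamma\tilde{v}}$ for the representative edge $\tilde{e}=\tilde{v}(\gamma\tilde{w})$ at the column $\gamma\tilde{v}$ equals $\varphi_{\gamma\tilde{v},\tilde{v}}$ or its negative depending on orientation conventions. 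I would trace the intermediate expression $d\tau(\gamma)u_{-\gamma v}$ from the proof of Proposition \ref{p:rigidityoperator} and the appearance of $d\tau(\gamma)$ in the proof of Theorem \ref{t:symbol}, where the $[w]$-term already carried a factor $d\tau(\gamma)$ before being absorbed into $\varphi_{\tilde{w},-\gamma\tilde{v}}$ via that same symmetry relation.

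The main obstacle I anticipate is purely bookkeeping: correctly matching the indexing of the coboundary entry $C(G,\varphi)_{\tilde{e},\gamma\tilde{v}}$ (which lives on the original graph $G$, with its own sign convention $\varphi_{v,w}=-\varphi_{w,v}$ and row-entry pattern) against the orbit-matrix entry (which lives on the gain graph, with its orientation $[e]=([v],[w])$ fixed once and for all). The identity in (ii) is essentially the assertion that \emph{unwinding} the symmetry relation converts the representative-edge entry $\varphi_{\tilde{w},-\gamma\tilde{v}}$ appearing in the orbit matrix back into the literal $(\tilde{e},\gamma\tilde{v})$-entry of $C(G,\varphi)$ composed with the linear isometry $d\tau(\gamma)$. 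Verifying this is a direct substitution using $\varphi_{\tilde{w},-\gamma\tilde{v}}=\varphi_{\gamma\tilde{w},\tilde{v}}\circ d\tau(\gamma)$ (from the defining relation, noting $\tau(\gamma)$ and its linear part $d\tau(\gamma)$ agree on differences), and then recognizing $\varphi_{\gamma\tilde{w},\tilde{v}}=\pm C(G,\varphi)_{\tilde{e},\gamma\tilde{v}}$ from the coboundary row pattern for the edge $\tilde{e}$. Once the orientation signs are reconciled, the equality is immediate, so I would devote the bulk of the write-up to pinning down conventions rather than to any substantive analytic step.
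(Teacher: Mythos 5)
Your proposal is correct and is essentially the argument the paper intends: the paper in fact states Corollary \ref{cor:FCoef} with no written proof, treating it as an immediate consequence of the orbit-matrix formula in Theorem \ref{t:symbol}, and your plan (read off that each entry of $O_\G(\chi)$ is constant plus a single $\chi(\gamma_{[e]})$-term, use orthonormality of the characters $\delta_\gamma$ on the compact group $\hat{\Gamma}$ to identify the coefficients with $\hat{\Phi}(\gamma)$, and convert $\varphi_{\tilde{w},-\gamma\tilde{v}}=\varphi_{\gamma\tilde{w},\tilde{v}}\circ d\tau(\gamma)$ via the symmetry relation) is exactly that computation. The only point to tidy in a full write-up is the bookkeeping you already flag: the sign in $\varphi_{\gamma\tilde{w},\tilde{v}}=C(G,\varphi)_{\tilde{e},\gamma\tilde{w}}$ is $+$ by the stated row convention, and for the pairs $([e],[v])$ where $[v]$ is not the head of $[e]$ or $\gamma$ is not its gain, both sides vanish because freeness of the action prevents $\gamma\tilde{v}$ from being an endpoint of $\tilde{e}$.
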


\begin{remark}
The orbit matrix $O_\G(1_{\hat{\Gamma}})$ was first introduced in \cite{sch-whi} in the context of finite bar-joint frameworks $(G,p)$ with an abelian symmetry group. There the linear maps $\varphi_{v,w}$ are derived from Euclidean distance constraints and the orbit matrix is used to analyze fully symmetric motions of the framework in Euclidean space $\bR^d$. The general orbit matrices $O_\G(\chi)$ were later introduced in \cite{schtan} and used to derive the block-diagonalisation result in Corollary \ref{c:block}.

The symbol function $\Phi$ for periodic bar-joint frameworks in $\bR^d$, again with Euclidean distance constraints, was first introduced in \cite{owe-pow}. In this setting the symmetry group is $\bZ^d$ and the dual group is the $d$-torus $\bT^d$. It is proved there that the rigidity matrix for the framework determines a Hilbert space operator $R(G,p):\ell^2(V,\bC^d)\to \ell^2(E,\bC)$ which is unitarily equivalent to the multiplication operator $M_\Phi:L^2(\bT^d,\,\bC^{d|V_0|})\to L^2(\bT^d,\bC^{|E_0|})$.

Theorem \ref{t:symbol} unifies and generalises these two contexts to frameworks with a general (finite or infinite) discrete abelian symmetry group and arbitrary linear edge constraints. See Section \ref{s:examples} for some examples.  
\end{remark}

\section{A generalised RUM spectrum}
\label{s:RUM}
Let $\G=(G,\varphi,\theta,\tau)$ be a $\Gamma$-symmetric framework for $X$ and $Y$ with  symbol function $\Phi\in C(\hat{\Gamma},B(X^{V_0},Y^{E_0}))$. 
Fix $\chi\in\hat{\Gamma}$ and  $a\in X^{V_0}$ and define 
$z(\chi,a)=(z_v)_{v\in V}\in\ell^\infty(V, X)$ to be the bounded vector with components,
\[z_v = \chi(\gamma)d\tau(\gamma)a_{[v]},\quad \mbox{ for }v=\beta_V(\gamma,[v]).\]
We refer to $z(\chi,a)$ as a {\em $\chi$-symmetric} vector in $\ell^\infty(V, X)$.

In this section our aim is to prove the following result.

\begin{theorem}
\label{t:twistedflex}
%Let $\G=(G,\varphi,\theta,\tau)$ be a $\Gamma$-symmetric  framework for $X$ and $Y$ with symbol function $\Phi$. Let $\chi\in\Omega(\G)$ and let  $a=(a_{[v]})_{[v]\in V_0}\in\operatorname{ker}(\Phi(\chi))$.
%The vector 
%$z=(z_v)_{v\in V}\in\ell^\infty(V, X)$ with components,
%\[z_v = \chi(\gamma)d\tau(\gamma)a_{[v]},\quad \mbox{ for }v=\beta_V(\gamma,[v]),\]
If  $a\in\operatorname{ker}\Phi(\chi)$ then $z(\chi,a)\in \ker C(G,\varphi)$.
\end{theorem}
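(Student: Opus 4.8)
The plan is to show directly that $z(\chi,a)$ lies in the kernel of $C(G,\varphi)$ by computing the image component-wise over the edge set $E$ and using the bijection $\beta_E$ to reduce everything to the orbit representatives. Since $C(G,\varphi)$ sends $(z_v)_{v\in V}$ to $(\varphi_{v,w}(z_v-z_w))_{vw\in E}$, it suffices to show that $\varphi_{v,w}(z_v-z_w)=0$ for every edge $vw\in E$. Because $\theta$ acts freely and $\beta_E$ is a bijection, every edge $e\in E$ has the form $\gamma\tilde{e}$ for a unique $\gamma\in\Gamma$ and a unique edge orbit $[e]\in E_0$; so I would fix a directed edge $[e]=([v],[w])$ in the gain graph with gain $\gamma_0\in\Gamma$ and representative $\tilde{e}=\tilde{v}(\gamma_0\tilde{w})$, and then evaluate the edge-component of $C(G,\varphi)z(\chi,a)$ on the translated edge $\gamma\tilde{e}=(\gamma\tilde{v})(\gamma\gamma_0\tilde{w})$.

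The main computation is then to expand the two endpoint values of $z(\chi,a)$ using its defining formula. For the head vertex $\gamma\tilde{v}=\beta_V(\gamma,[v])$ we have $z_{\gamma\tilde{v}}=\chi(\gamma)d\tau(\gamma)a_{[v]}$, and for the tail vertex $\gamma\gamma_0\tilde{w}=\beta_V(\gamma+\gamma_0,[w])$ we have $z_{\gamma\gamma_0\tilde{w}}=\chi(\gamma+\gamma_0)d\tau(\gamma+\gamma_0)a_{[w]}$. Applying $\varphi_{\gamma\tilde{v},\gamma\gamma_0\tilde{w}}$ and using the covariance relation $\varphi_{\gamma v,\gamma w}=\varphi_{v,w}\circ\tau(-\gamma)$ (together with the fact that $d\tau$ is the linear part, so the linear action intertwines correctly), I expect to be able to pull out the common factor $\chi(\gamma)d\tau(\gamma)$, reducing the edge-value to $\chi(\gamma)d\tau(\gamma)$ applied to the quantity $\varphi_{\tilde{v},\gamma_0\tilde{w}}(a_{[v]})+\chi(\gamma_0)\varphi_{\tilde{w},-\gamma_0\tilde{v}}(a_{[w]})$. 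This bracketed expression is precisely the $[e]$-component of $O_\G(\chi)a$, by the definition of the $\chi$-orbit matrix for a non-loop edge (with the loop case handled analogously).

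Since $\chi(\gamma)$ is a unimodular scalar and $d\tau(\gamma)$ is a linear isometry, hence invertible, the edge-value $\varphi_{v,w}(z_v-z_w)$ vanishes for the edge $e=\gamma\tilde{e}$ if and only if the $[e]$-component of $O_\G(\chi)a$ vanishes. By Theorem \ref{t:symbol} the symbol function satisfies $\Phi(\chi)=O_\G(\chi)$, so the hypothesis $a\in\ker\Phi(\chi)$ gives $O_\G(\chi)a=0$, meaning every edge-orbit component of $O_\G(\chi)a$ vanishes. Ranging over all $[e]\in E_0$ and all $\gamma\in\Gamma$, this forces $\varphi_{v,w}(z_v-z_w)=0$ for every edge of $G$, hence $C(G,\varphi)z(\chi,a)=0$.

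The step I expect to be the main obstacle is the bookkeeping in the covariance manipulation: I need to track carefully how the edge representative $\tilde{e}=\tilde{v}(\gamma_0\tilde{w})$ interacts with the group translation by $\gamma$, ensuring that the correct gain $\gamma_0$ and the correct linear parts $d\tau(\gamma)$ versus $d\tau(\gamma+\gamma_0)$ appear, and that the twisting character $\chi(\gamma_0)$ lands in exactly the position dictated by the orbit matrix. A parallel but simpler check is required for loop edges, where head and tail lie in the same vertex orbit and the two contributions combine into the single diagonal entry $\varphi_{\tilde{v},\gamma_0\tilde{v}}+\chi(\gamma_0)\varphi_{\tilde{v},-\gamma_0\tilde{v}}$. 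Once the algebra is aligned with the orbit-matrix definition, the conclusion is immediate from Theorem \ref{t:symbol}.
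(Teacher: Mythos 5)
Your proof is correct, but it takes a genuinely different route from the paper. The paper does not verify the kernel condition edge by edge; instead it works at the level of the operator factorisation $C(G,\varphi)=S_E^{-1}F_{Y^{E_0}}^{-1}M_\Phi F_{X^{V_0}}T_{\tilde\tau}S_V$ on $\ell^2$, and since $z(\chi,a)$ generally lies only in $\ell^\infty(V,X)$, it transfers that factorisation to $z(\chi,a)$ by a limiting argument: it introduces an approximate identity $(u_\lambda)$ on $L^1(\hat\Gamma)$, defines functionals $\psi_\lambda$, $\nu_\lambda$, $\rho(\chi,a)$ on $C(\hat\Gamma,Y^{E_0})$ and $\ell^1(\Gamma,Y^{E_0})$, and shows $\nu_\lambda\stackrel{w^*}{\to}0$ (using $\Phi(\chi)a=0$ and continuity of $\Phi$ at $\chi$) while also $\nu_\lambda\stackrel{w^*}{\to}\rho(\chi,a)$ (using $\check u_\lambda\to 1$ uniformly on compacta), whence $\rho(\chi,a)=0$ and $z(\chi,a)\in\ker C(G,\varphi)$ by Hausdorffness of the $w^*$-topology. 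Your argument bypasses all of this: it is a pointwise algebraic computation on each edge $\gamma\tilde e$, using the covariance relation to pull out $\chi(\gamma)$ and reduce the edge value to the $[e]$-row of $O_\G(\chi)$ applied to $a$, then invoking Theorem \ref{t:symbol}. This is more elementary and makes no use of the $\ell^2$ theory at all; in effect it re-runs the computation underlying Theorem \ref{t:symbol} in the $\ell^\infty$ setting, where it is perfectly legitimate since $\ker C(G,\varphi)$ is just the kernel of the coboundary map on $X^V$. What the paper's route buys is that it only needs continuity of $\Phi$ at $\chi$ and the abstract factorisation, not the explicit entrywise formula, and it exhibits the general mechanism for extending multiplication-operator identities to bounded non-$\ell^2$ vectors. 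Two small points of care in your write-up: the covariance relation $\varphi_{\gamma v,\gamma w}=\varphi_{v,w}\circ\tau(-\gamma)$ must be read with the linear part $d\tau(-\gamma)$ (as the paper itself implicitly does), which you flag; and the outer factor on the edge value is just the scalar $\chi(\gamma)$ --- the isometry $d\tau(\gamma)$ acts on the $X$-side and is absorbed by covariance, so it should not appear applied to the $Y$-valued bracket. Neither affects the conclusion, since a unimodular scalar multiple vanishes exactly when the orbit-matrix component does.
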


\subsection{Key lemmas}
Let $(u_\lambda)_{\lambda\in \Lambda}$ be an approximate identity for $L^1(\hat{\Gamma})$ where, for each $\lambda\in\Lambda$,
 $u_\lambda$ is a positive continuous function satisfying
$u_\lambda(\eta ) = u_\lambda(\eta^{-1} )$ for all $\eta \in \hat{\Gamma}$ 
and  $\|u_\lambda\|_1=1$.
It is a standard procedure to show that,  
\[\|u_\lambda* f - f\|_p\to 0,\]
for all $p\in[1,\infty)$ when $f\in L^p(\hat{\Gamma})$ and for $p=\infty$ when $f\in C(\hat{\Gamma})$.
(See \cite[Proposition 2.42]{fol} eg.)
Note that since $u_\lambda(\eta ) = u_\lambda(\eta^{-1} )$ for all $\eta \in \hat{\Gamma}$ it follows that $\check{u}_\lambda=\hat{u}_\lambda\in C_0(\Gamma)$.
%Fix $\chi \in\Omega(\G)$ and  $a\in\operatorname{ker}(\Phi(\chi ))$.

For each $\lambda\in\Lambda$,  denote by $u_{\lambda,a}:\hat{\Gamma}\to X^{V_0}$ the function $\eta \mapsto u_\lambda(\eta )a$ and define $\psi_\lambda \in C(\hat{\Gamma},Y^{E_0})^*$ by, 
\[\psi_\lambda(g) = 
\int_{\hat{\Gamma}}\, \langle\, \Phi(\eta )(u_{\lambda, a}(\chi^{-1}\eta)), \, g(\eta )\,\rangle \, d\eta ,\quad \forall\,g\in C(\hat{\Gamma},Y^{E_0}).\]

\begin{lemma}
\label{l:twistedflex}
%Let $\G=(G,\varphi,\theta,\tau)$ be a $\Gamma$-symmetric framework for $X$ and $Y$ with symbol function $\Phi$.
%Let $\chi \in\Omega(\G)$ and let  $a\in\operatorname{ker}(\Phi(\chi ))$. 
If  $a\in\operatorname{ker}\Phi(\chi)$ then $\psi_\lambda\stackrel{w^\ast}{\rightarrow}0$. 
\end{lemma}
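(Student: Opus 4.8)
The plan is to show that the functional $\psi_\lambda$ converges weak-$*$ to zero by exploiting the fact that $a\in\operatorname{ker}\Phi(\chi)$ together with the continuity of $\Phi=O_\G$ and the approximate-identity properties of $(u_\lambda)_{\lambda\in\Lambda}$. First I would fix an arbitrary test function $g\in C(\hat\Gamma,Y^{E_0})$ and rewrite
\[
\psi_\lambda(g)=\int_{\hat\Gamma}\langle\,\Phi(\eta)\,u_\lambda(\chi^{-1}\eta)\,a,\ g(\eta)\,\rangle\,d\eta.
\]
Since $\Phi(\eta)a$ is a continuous $Y^{E_0}$-valued function of $\eta$ and $\langle\Phi(\eta)a,g(\eta)\rangle$ is a continuous scalar function, the integral is the pairing of the scalar function $\eta\mapsto\langle\Phi(\eta)a,g(\eta)\rangle$ against the translate $u_\lambda(\chi^{-1}\cdot)$ of the approximate identity. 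The key point is that this is (up to the translation by $\chi$) exactly a convolution of the approximate identity against a continuous function evaluated at the point $\chi$.

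The main step is then to identify the limit. I would set $h(\eta):=\langle\Phi(\eta)a,\,g(\eta)\rangle$, a continuous scalar function on $\hat\Gamma$, and rewrite the integral using the substitution $\eta\mapsto\chi\eta$ so that it becomes $\int_{\hat\Gamma}u_\lambda(\eta)\,h(\chi\eta)\,d\eta$. Because $u_\lambda(\eta)=u_\lambda(\eta^{-1})$ and $\|u_\lambda\|_1=1$, and because $(u_\lambda)$ is an approximate identity concentrating mass near the identity of $\hat\Gamma$, this integral converges to $h(\chi)=\langle\Phi(\chi)a,g(\chi)\rangle$. (More carefully: $\int u_\lambda(\eta)h(\chi\eta)\,d\eta$ is the value at the identity of the convolution $u_\lambda*\tilde h$, where $\tilde h(\eta)=h(\chi\eta^{-1})$, evaluated via the stated property $\|u_\lambda*f-f\|_\infty\to0$ for $f\in C(\hat\Gamma)$.) Invoking the hypothesis $a\in\operatorname{ker}\Phi(\chi)$, we have $\Phi(\chi)a=0$, hence $h(\chi)=0$, and therefore $\psi_\lambda(g)\to 0$.

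Since $g\in C(\hat\Gamma,Y^{E_0})$ was arbitrary, this establishes $\psi_\lambda\xrightarrow{w^*}0$, which is the claim. The routine verification I would suppress is the precise reduction of the $Y^{E_0}$-valued integral to the scalar convolution identity; this is standard once one picks a basis for $Y^{E_0}$ and applies the approximate-identity convergence coordinatewise, using that $\hat\Gamma$ is a locally compact abelian group and $\Phi$ is continuous and bounded.

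The step I expect to be the main obstacle is making the convolution identification completely rigorous in the setting of an arbitrary locally compact abelian group $\hat\Gamma$ with possibly non-compact pieces: one must ensure that the continuous function $h(\chi\,\cdot)$ is handled correctly by the approximate identity, and that the symmetry condition $u_\lambda(\eta)=u_\lambda(\eta^{-1})$ is used so that evaluating the convolution at the identity genuinely recovers the value $h(\chi)$ rather than some reflected or translated value. Everything else is a direct consequence of the continuity of $O_\G$ established in Theorem \ref{t:symbol} and the standard approximate-identity estimate cited from \cite{fol}.
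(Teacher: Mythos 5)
Your proposal is correct and follows essentially the same route as the paper: both identify $\psi_\lambda(g)$ with the convolution of the approximate identity against the continuous scalar function $\eta\mapsto\langle\Phi(\chi\eta)a,g(\chi\eta)\rangle$ evaluated at the identity, and conclude from $\Phi(\chi)a=0$. The obstacle you flag about non-compactness does not arise here, since $\Gamma$ is discrete and hence $\hat\Gamma$ is compact, so $h$ is automatically bounded and uniformly continuous.
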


\proof
%By Theorem \ref{t:symbol}, $\Phi$ is continuous on $\hat{\Gamma}$.
Let $g\in C(\hat{\Gamma},Y^{E_0})$ and define
$f\in C(\hat{\Gamma})$ by,
\[f(\eta ) = \langle \Phi(\chi \eta)a,\, g(\chi \eta)\rangle,\quad \forall\,\eta\in \hat{\Gamma}.\]
Note that $f(1_{\hat{\Gamma}}) =  \langle \Phi(\chi )a,\, g(\chi ) \rangle =0$.
We have,
%\begin{eqnarray*}
\[\psi_\lambda(g)
= \int_{\hat{\Gamma}} u_\lambda(\chi ^{-1}\eta)\langle \Phi(\eta ) a,\, g(\eta )\rangle\, d\eta  
%&=& \int_{\hat{\Gamma}} u_\lambda(\eta )\langle \Phi(\chi\eta ) a, \, g(\chi\eta)\rangle \,d\eta  \\
%&=& \int_{\hat{\Gamma}} u_\lambda(\eta ) f(\eta )\, d\eta  \\
= (u_\lambda*f)(1_{\hat{\Gamma}}) 
\rightarrow f(1_{\hat{\Gamma}}) =0.\] 
%\end{eqnarray*}
Hence $\psi_\lambda\stackrel{w^\ast}{\rightarrow}0$. 
\endproof

For each $\lambda\in\Lambda$, define 
$\nu_\lambda \in \ell^1(\Gamma,Y^{E_0})^*$ by, 
\[\nu_\lambda(g) = 
\sum_{\gamma\in\Gamma}\, \langle\, \tilde{C}(G,\varphi)\circ T_{\tilde{\tau}}^{-1} \circ M_{\delta_\chi }(\check{u}_{\lambda, a})(\gamma),\, g(\gamma) \,\rangle,\quad \forall\,g\in \ell^1(\Gamma,Y^{E_0}).\]

\begin{lemma}
\label{l:twistedflex2}
%Let $\G=(G,\varphi,\theta,\tau)$ be a $\Gamma$-symmetric  framework  for $X$ and $Y$ with symbol function $\Phi$. Let $\chi \in\Omega(\G)$ and let  $a\in\operatorname{ker}(\Phi(\chi ))$.  
If  $a\in\operatorname{ker}\Phi(\chi)$ then $\nu_\lambda\stackrel{w^\ast}{\rightarrow}0$. 
\end{lemma}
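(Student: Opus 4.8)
The plan is to deduce Lemma \ref{l:twistedflex2} from Lemma \ref{l:twistedflex} by identifying $\nu_\lambda$ with a pushforward of $\psi_\lambda$ through the factorisation machinery already established. The point is that both functionals are built from the same data — the symbol function $\Phi$, the vector $a\in\ker\Phi(\chi)$, and the approximate identity $(u_\lambda)$ — but $\psi_\lambda$ lives on the Fourier side $C(\hat\Gamma,Y^{E_0})^*$ while $\nu_\lambda$ lives on the group side $\ell^1(\Gamma,Y^{E_0})^*$. So first I would unwind the definition of $\nu_\lambda$ using Theorem \ref{t:rigidityoperator}, which tells us that $\tilde C(G,\varphi) = F_{Y^{E_0}}^{-1}\circ M_\Phi\circ F_{X^{V_0}}\circ T_{\tilde\tau}$. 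Substituting this into the definition of $\nu_\lambda$ and cancelling the $T_{\tilde\tau}$ against the $T_{\tilde\tau}^{-1}$ already present, the argument of the inner product becomes
\[
F_{Y^{E_0}}^{-1}\circ M_\Phi\circ F_{X^{V_0}}\circ M_{\delta_\chi}(\check u_{\lambda,a}).
\]

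Next I would simplify $F_{X^{V_0}}\circ M_{\delta_\chi}(\check u_{\lambda,a})$. Since $\check u_{\lambda,a}$ is the inverse Fourier transform of $u_{\lambda,a}$, applying $F_{X^{V_0}}$ returns $u_{\lambda,a}$, but the intervening multiplication by $\delta_\chi$ must be converted to the Fourier side. By Proposition \ref{p:multiplication} (applied coordinatewise), multiplication by $\delta_\chi$ on the group side corresponds, after Fourier transform, to a translation by $\chi$ on $\hat\Gamma$; more precisely $F_{X^{V_0}}\circ M_{\delta_\chi}\circ F_{X^{V_0}}^{-1}$ is the translation operator $\eta\mapsto(\,\cdot\,)(\chi^{-1}\eta)$ (up to the conjugation conventions in Proposition \ref{p:multiplication}). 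Hence $F_{X^{V_0}}M_{\delta_\chi}(\check u_{\lambda,a})$ is the function $\eta\mapsto u_{\lambda,a}(\chi^{-1}\eta) = u_\lambda(\chi^{-1}\eta)\,a$, which is exactly the argument appearing inside $\Phi$ in the definition of $\psi_\lambda$. At this stage the inner argument of $\nu_\lambda$ is $F_{Y^{E_0}}^{-1}$ applied to $\eta\mapsto\Phi(\eta)u_{\lambda,a}(\chi^{-1}\eta)$.

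Finally I would transport the pairing. The sum defining $\nu_\lambda(g)$ is the $\ell^2(\Gamma,Y^{E_0})$ pairing of $F_{Y^{E_0}}^{-1}$ of the above function against $g$; since $F_{Y^{E_0}}$ is unitary, this equals the $L^2(\hat\Gamma,Y^{E_0})$ pairing of $\eta\mapsto\Phi(\eta)u_{\lambda,a}(\chi^{-1}\eta)$ against $F_{Y^{E_0}}(g) = \hat g$. That is precisely $\psi_\lambda(\hat g)$. Thus $\nu_\lambda(g) = \psi_\lambda(\hat g)$ for every $g\in\ell^1(\Gamma,Y^{E_0})$, and since Lemma \ref{l:twistedflex} gives $\psi_\lambda\stackrel{w^*}{\to}0$ against every element of $C(\hat\Gamma,Y^{E_0})$, and $\hat g$ is continuous, we conclude $\nu_\lambda(g)\to 0$, i.e.\ $\nu_\lambda\stackrel{w^*}{\to}0$.

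The main obstacle I anticipate is bookkeeping the domains and the direction conventions rather than any genuine difficulty: I must check that $\check u_{\lambda,a}$ genuinely lies in the domain where Proposition \ref{p:multiplication} applies and that the resulting function is integrable enough to pair with $\ell^1$ data (here finite-dimensionality of $X^{V_0},Y^{E_0}$ and $\check u_\lambda=\hat u_\lambda\in C_0(\Gamma)$ help), and I must be careful that the $w^*$-convergence is tested against the right class of functions — $\hat g$ is continuous since $g\in\ell^1(\Gamma,Y^{E_0})$, so Lemma \ref{l:twistedflex} (whose convergence holds against $C(\hat\Gamma,Y^{E_0})$) does apply. Matching the conjugation $\delta_\chi$ versus $\overline{\delta_\chi}$ and the translation direction $\chi\eta$ versus $\chi^{-1}\eta$ against the conventions fixed in Proposition \ref{p:multiplication} and in the definition of $\psi_\lambda$ is the one place where a sign or inverse could slip, so I would verify that step explicitly.
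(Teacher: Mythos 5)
Your proposal is correct and follows essentially the same route as the paper: both identify the argument of the inner product in $\nu_\lambda$ as $\check{\phi}_\lambda$ where $\phi_\lambda(\eta)=\Phi(\eta)(u_{\lambda,a}(\chi^{-1}\eta))$, using Theorem \ref{t:rigidityoperator} and Proposition \ref{p:multiplication}, and then transfer the pairing to the Fourier side to get $\nu_\lambda(g)=\psi_\lambda(\hat{g})\to 0$ via Lemma \ref{l:twistedflex}. The only cosmetic difference is that you invoke unitarity of $F_{Y^{E_0}}$ to move the pairing, where the paper interchanges the sum and integral explicitly; both are valid here since $\hat\Gamma$ is compact and $g\in\ell^1\subset\ell^2$.
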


\proof
%By Theorem \ref{t:symbol}, $\Phi$ is continuous on $\hat{\Gamma}$.
For each $\lambda\in\Lambda$, define the continuous function
$\phi_\lambda \in C(\hat{\Gamma},Y^{E_0})$ by, 
\[\phi_\lambda(\eta ) = \Phi(\eta )(u_{\lambda, a}(\chi^{-1}\eta )).\]
By Proposition \ref{p:multiplication}  and Theorem \ref{t:rigidityoperator} we obtain,
\begin{eqnarray*}
\check{\phi}_\lambda
%&=& F_{Y^{E_0}}^{-1} M_\Phi(D_{\chi }u_{\lambda,a} ) \\
%&=& F_{Y^{E_0}}^{-1}M_\Phi F_{ X^{V_0}}F_{ X^{V_0}}^{-1}(D_{\chi }u_\lambda \otimes a) \\
%&=& F_{Y^{E_0}}^{-1}M_\Phi F_{ X^{V_0}}(F_{\bC}^{-1}D_{\chi } u_\lambda) \otimes a \\
%&=& F_{Y^{E_0}}^{-1}M_\Phi F_{ X^{V_0}}(F_{\bC}^{-1}D_{\chi } F_{\bC} F_{\bC}^{-1} u_\lambda) \otimes a \\
&=& \tilde{C}(G,\varphi)\circ T_{\tilde{\tau}}^{-1}\circ M_{\delta_\chi }(\check{u}_{\lambda, a}).
\end{eqnarray*}
Let $g\in \ell^1(\Gamma, Y^{E_0})$. 
Then $\hat{g}\in C(\hat{\Gamma},Y^{E_0})$ and so, using Lemma  \ref{l:twistedflex}, we have,
\begin{eqnarray*}
\nu_\lambda(g) 
&=& \sum_{\gamma\in\Gamma}\, \langle \check{\phi}_\lambda(\gamma),\, g(\gamma)\rangle \\
&=& \sum_{\gamma\in\Gamma}\, \langle\, \int_{\hat{\Gamma}} \eta (\gamma) \phi_\lambda(\eta ) \,d\eta,\,  g(\gamma)\,\rangle \\
&=& \int_{\hat{\Gamma}}\, \langle \phi_\lambda(\eta ),\,  \sum_{\gamma\in\Gamma} \overline{\eta (\gamma)} g(\gamma) \rangle \, d\eta  \\
&=& \int_{\hat{\Gamma}} \,\langle \phi_\lambda(\eta ),\, \hat{g}(\eta )\rangle\, d\eta \\
&=& \psi_\lambda(\hat{g})
\rightarrow 0.
\end{eqnarray*}
Thus $\nu_\lambda\stackrel{w^\ast}{\rightarrow}0$. 
\endproof

Denote by $\chi \otimes a:\Gamma\to  X^{V_0}$ the function $\gamma\mapsto \chi (\gamma)a$ and define
$\rho(\chi,a)  \in \ell^1(\Gamma,Y^{E_0})^*$ by, 
\[\rho(\chi,a)(g) = 
\sum_{\gamma\in\Gamma}\, \langle\, \tilde{C}(G,\varphi)\circ T_{\tilde{\tau}}^{-1}(\chi \otimes a)(\gamma),\, g(\gamma) \,\rangle,\quad \forall\,g\in \ell^1(\Gamma,Y^{E_0}).\]

\begin{lemma}
\label{l:twistedflex3}
$\nu_\lambda\stackrel{w^\ast}{\rightarrow} \rho(\chi,a)$.
\end{lemma}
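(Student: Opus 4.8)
The plan is to prove the stronger pointwise statement that $\nu_\lambda(g)\to\rho(\chi,a)(g)$ for every $g\in\ell^1(\Gamma,Y^{E_0})$, which is exactly $w^\ast$-convergence in $\ell^1(\Gamma,Y^{E_0})^\ast$. Writing $A=\tilde{C}(G,\varphi)\circ T_{\tilde{\tau}}^{-1}$ for the common operator appearing in both functionals, linearity of $A$ gives
\[\nu_\lambda(g)-\rho(\chi,a)(g)=\sum_{\gamma\in\Gamma}\langle (Ah_\lambda)(\gamma),g(\gamma)\rangle,\qquad h_\lambda:=M_{\delta_\chi}(\check{u}_{\lambda,a})-\chi\otimes a,\]
so it suffices to show this sum tends to $0$.

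First I would compute $h_\lambda$ explicitly. Since $u_{\lambda,a}=u_\lambda\,a$ is a scalar multiple of the constant vector $a$, its inverse transform is $\check{u}_{\lambda,a}(\gamma)=\check{u}_\lambda(\gamma)\,a$, so $(M_{\delta_\chi}\check{u}_{\lambda,a})(\gamma)=\chi(\gamma)\check{u}_\lambda(\gamma)\,a$ and hence $h_\lambda(\gamma)=\chi(\gamma)\bigl(\check{u}_\lambda(\gamma)-1\bigr)a$. From $\check{u}_\lambda(\gamma)=\int_{\hat{\Gamma}}\eta(\gamma)u_\lambda(\eta)\,d\eta$ together with the approximate-identity properties ($u_\lambda\ge 0$, $\|u_\lambda\|_1=1$, mass concentrating at $1_{\hat{\Gamma}}$) one reads off $|\check{u}_\lambda(\gamma)|\le 1$ for all $\gamma$ and $\check{u}_\lambda(\gamma)\to 1$ for each fixed $\gamma$. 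Consequently $\|h_\lambda\|_\infty\le 2\|a\|$ uniformly in $\lambda$, while $h_\lambda(\gamma)\to 0$ pointwise in $\gamma$. Note this convergence is merely pointwise-and-bounded, not in norm.

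The main obstacle is that $\chi\otimes a$ lies only in $\ell^\infty(\Gamma,X^{V_0})$ (not in $\ell^2$ when $\Gamma$ is infinite), so Hilbert-space continuity of $A$ on $\ell^2$ is unavailable and I must work in the $\ell^\infty$–$\ell^1$ duality. The key is the concrete description $A=\tilde{C}(G,\varphi)\circ T_{\tilde{\tau}}^{-1}=F_{Y^{E_0}}^{-1}\circ M_\Phi\circ F_{X^{V_0}}$ extracted from Theorem \ref{t:rigidityoperator}, i.e.\ $A$ is convolution by $\hat{\Phi}$, and by Corollary \ref{cor:FCoef} $\hat{\Phi}$ is supported on the finite set $\{0\}\cup\Gamma_0$. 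Thus $(Ah)(\gamma')=\sum_{\beta}\hat{\Phi}(\beta)\,h(\gamma'-\beta)$ is a finite band operator, which makes sense on $\ell^\infty$ (consistently with Proposition \ref{p:bounded}, $S_V,S_E$ being rearrangement isometries and $T_{\tilde{\tau}}^{-1}$ acting pointwise by isometries). Transposing the finitely supported kernel produces a bounded pre-adjoint $\mathcal{A}\colon\ell^1(\Gamma,Y^{E_0})\to\ell^1(\Gamma,X^{V_0})$, $(\mathcal{A}g)(\gamma)=\sum_{\gamma'}\hat{\Phi}(\gamma'-\gamma)^\ast g(\gamma')$, with $\|\mathcal{A}g\|_1\le\bigl(\sum_\beta\|\hat{\Phi}(\beta)\|\bigr)\|g\|_1$; the interchange of the finite band sum with the $\ell^1$-sum is justified by absolute convergence. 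This yields the identity $\sum_{\gamma}\langle (Ah_\lambda)(\gamma),g(\gamma)\rangle=\sum_{\gamma}\langle h_\lambda(\gamma),(\mathcal{A}g)(\gamma)\rangle$.

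Finally I would apply dominated convergence on the discrete group $\Gamma$: each summand $\langle h_\lambda(\gamma),(\mathcal{A}g)(\gamma)\rangle\to 0$ by the pointwise convergence $h_\lambda(\gamma)\to 0$, and it is dominated by $2\|a\|\,\|(\mathcal{A}g)(\gamma)\|$, which is summable since $\mathcal{A}g\in\ell^1(\Gamma,X^{V_0})$. Hence the sum tends to $0$, giving $\nu_\lambda(g)\to\rho(\chi,a)(g)$ for every $g$, i.e.\ $\nu_\lambda\stackrel{w^\ast}{\rightarrow}\rho(\chi,a)$. I expect the only genuinely delicate point to be the passage to the pre-adjoint in the $\ell^\infty$–$\ell^1$ pairing, forced by $\chi\otimes a\notin\ell^2$; everything else is an explicit computation or a routine limiting argument.
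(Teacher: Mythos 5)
Your proof is correct, and it handles the operator $A=\tilde{C}(G,\varphi)\circ T_{\tilde{\tau}}^{-1}$ by a genuinely different mechanism than the paper. The paper keeps $A$ on the $h_\lambda$ side: using Proposition \ref{p:bounded} it treats $A$ as a bounded operator on $\ell^\infty$, splits the sum over a finite set $K$ capturing most of $\|g\|_1$, and controls the part over $K$ via the fact that $\check{u}_\lambda\to 1$ \emph{uniformly on compact sets} (\cite[Lemma 4.46]{fol}), together with a truncation step replacing $f_\lambda$ by $f_\lambda 1_K$; that step tacitly uses locality of the coboundary operator (as literally written it is too strong, since $A(f_\lambda)(\gamma)$ for $\gamma\in K$ also sees values of $f_\lambda$ on $K+\Gamma_0$, and one should enlarge $K$ accordingly). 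You instead make the locality explicit---by Theorem \ref{t:rigidityoperator} and Corollary \ref{cor:FCoef}, $A$ is a band operator whose finitely many bands are the coefficients $\hat{\Phi}(\beta)$, $\beta\in\{0\}\cup\Gamma_0$---and transpose it onto $g$, obtaining a fixed function $\mathcal{A}g\in\ell^1$ to pair against the uniformly bounded, pointwise-null net $h_\lambda$. This buys two things: you need only \emph{pointwise} convergence $\check{u}_\lambda(\gamma)\to 1$ rather than the uniform-on-compacts lemma, and the explicit band description removes the truncation subtlety entirely. The points you should tighten are: (1) justify that the band formula represents $A$ on all of $\ell^\infty$ and not merely on $\ell^2$, which follows from Corollary \ref{cor:FCoef}(ii), or from the observation that both expressions for $(Ah)(\gamma')$ depend on finitely many coordinates of $h$ and agree on finitely supported vectors; (2) with the paper's conventions the shift in your kernel has the opposite sign, $(Ah)(\gamma')=\sum_\beta\hat{\Phi}(\beta)\,h(\gamma'+\beta)$, with the pre-adjoint adjusted accordingly---harmless, but worth fixing; (3) dominated convergence for a \emph{net} fails on general measure spaces, so you should note that on a discrete group it reduces to the finite-set splitting (choose finite $K$ with $\sum_{\gamma\notin K}\|(\mathcal{A}g)(\gamma)\|$ small, then use pointwise convergence of $h_\lambda$ on $K$ and directedness of $\Lambda$)---which is, in effect, exactly the estimate the paper performs on the other side of the duality.
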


\proof
Let $g\in \ell^1(\Gamma,Y^{E_0})$ and let $\epsilon>0$.
Choose a finite subset $K\subset \Gamma$ such that $\sum\limits_{\gamma\notin K} \|g(\gamma)\|<\epsilon$.
By \cite[Lemma 4.46]{fol}, $\check{u}_\lambda \rightarrow 1$ uniformly on compact subsets of $\Gamma$ and so there exists $\lambda'\in\Lambda$ such that $\max_{\gamma\in K} |\check{u}_{\lambda}(\gamma)-1|<\epsilon$ for all $\lambda\geq \lambda'$.
%Note that $\|\check{u}_\lambda\|_\infty \leq \|u_\lambda\|_1=1$.
%Thus,  for all $\lambda\geq \lambda'$ we have,
%\begin{eqnarray*}
%\[\sum_{\gamma\in \Gamma}  |\check{u}_\lambda(\gamma)-1|\|g(\gamma)\|
%\leq 
%\sum_{\gamma\in K}  |\check{u}_\lambda(\gamma)-1|\|g(\gamma)\|
%+\|\check{u}_\lambda-1\|_\infty\sum_{\gamma\notin K} \|g(\gamma)\|
%\leq
% (\|g\|_1+2)\epsilon.
%\]%\end{eqnarray*}

% and since is isometric.
%Also, by Proposition \ref{p:bounded}, $\tilde{C}(G,\varphi)\in B(\ell^\infty(\Gamma,  X^{V_0}),\ell^\infty(\Gamma,  Y^{E_0}))$. 

Define $f_\lambda\in \ell^\infty(\Gamma,X^{V_0})$ by setting
$f_\lambda  = M_{\delta_\chi}(\check{u}_{\lambda, a})-(\chi \otimes a)$ for each $\lambda\in \Lambda$.
%Note that $f_\lambda(\gamma) = \chi(\gamma)(\check{u}_{\lambda}(\gamma)-1)a$ for all $\gamma\in \Gamma$. 
Since $\|\check{u}_\lambda\|_\infty \leq \|u_\lambda\|_1=1$ we have, \[\|f_\lambda\|_\infty = \sup_{\gamma\in\Gamma} \|\chi(\gamma)(\check{u}_{\lambda}(\gamma)-1)a\| \leq 2\|a\|.\]

Let $1_K$ denote the characteristic function for $K$. Then for all $\lambda\geq \lambda'$ we have,
 \[\|f_\lambda1_K\|_\infty = \max_{\gamma\in K} \|\chi(\gamma)(\check{u}_{\lambda}(\gamma)-1)a\| = \max_{\gamma\in K} |\check{u}_{\lambda}(\gamma)-1|\|a\|<\|a\|\epsilon.\]
Note that, by Proposition \ref{p:bounded}, $\tilde{C}(G,\varphi)\circ T_{\tilde{\tau}}^{-1}\in B(\ell^\infty(\Gamma,  X^{V_0}),\ell^\infty(\Gamma,  Y^{E_0}))$.
Moreover, $T_{\tilde{\tau}}^{-1}$ is isometric and so for all $\lambda\in \Lambda$,
\[\max_{\gamma\in K} \| \tilde{C}(G,\varphi)\circ T_{\tilde{\tau}}^{-1}(f_\lambda)(\gamma)\|
= \| \tilde{C}(G,\varphi)\circ T_{\tilde{\tau}}^{-1}(f_\lambda 1_K)\|_\infty 
\leq
\| \tilde{C}(G,\varphi)\|_{op}\|f_\lambda 1_K\|_\infty. 
%&=&
%\|\tilde{C}(G,\varphi)\|_{op} \|a\| \max_{\gamma\in K}  |\check{u}_\lambda(\gamma)-1|   \\
%\leq \|\tilde{C}(G,\varphi)\|_{op} \|a\| \epsilon.
\]
%&\leq&
%\|\tilde{C}(G,\varphi)\|\|a\|\left( \sum_{\gamma\in K}|\check{u}_\lambda(\gamma)-1|\,\|g(\gamma)\| + 2\sum_{\gamma\notin K} \|g(\gamma)\|\right).
%\end{eqnarray*}
Thus, for all $\lambda\geq \lambda'$ we have,
\begin{eqnarray*}
\left|\left(\nu_\lambda- \rho(\chi,a)\right)(g)\right|
&\leq&
\sum_{\gamma\in \Gamma} \big|\langle \tilde{C}(G,\varphi)\circ T_{\tilde{\tau}}^{-1}(f_\lambda)(\gamma),\, g(\gamma) \rangle\big|\\
&\leq&
\sum_{\gamma\in \Gamma} \|\tilde{C}(G,\varphi)\circ T_{\tilde{\tau}}^{-1}(f_\lambda)(\gamma)\|\, \|g(\gamma)\|\\
&\leq&
\| \tilde{C}(G,\varphi)\|_{op}\|f_\lambda 1_K\|_\infty \,\sum_{\gamma\in K} \,\| g(\gamma) \|  + \| \tilde{C}(G,\varphi) \|_{op}\|f_\lambda\|_\infty \sum_{\gamma\notin K} \,\| g(\gamma) \| \\
&\leq& \|\tilde{C}(G,\varphi)\|_{op}(\|g\|_1+2)\|a\|\epsilon
\end{eqnarray*}
We conclude that $\nu_\lambda(g)\rightarrow \rho(\chi,a)(g)$.
%Thus, \[\nu_\lambda\stackrel{w^\ast}{\rightarrow}\rho(\chi,a).\]
\endproof

\subsection{Proof of Theorem \ref{t:twistedflex}}

\proof 
By Lemmas \ref{l:twistedflex2} and \ref{l:twistedflex3} we have,
$\nu_\lambda\stackrel{w^\ast}{\rightarrow}0$ and $\nu_\lambda\stackrel{w^\ast}{\rightarrow}\rho(\chi,a)$.
Since the $w^\ast$-topology is Hausdorff it follows that $\rho(\chi,a)=0$.
Thus the function $f_{\chi ,a}\in\ell^\infty(\Gamma, X^{V_0})$ given by,
\[f_{\chi,a}(\gamma) = T_{\tilde{\tau}}^{-1}(\chi \otimes a)(\gamma)
= (\chi (\gamma) d\tau(\gamma)a_{[v]})_{[v]\in V_0}
%= \chi (\gamma) \begin{bmatrix}
%d\tau(\gamma)a_1\\
%\vdots \\
%d\tau(\gamma)a_{|V_0|}
%\end{bmatrix}
\]
lies in the kernel of $\tilde{C}(G,\varphi)$.
The result now follows since $z(\chi,a)=S_V^{-1}(f_{\chi ,a})$.
\endproof

The  {\em Rigid Unit Mode (RUM) spectrum} of $\G$ is defined as follows,
\[\Omega(\G) = \{\chi\in\hat{\Gamma} : \ker \Phi(\chi) \not=\{0\}\}.\]

\begin{remark}
The study of rigid unit modes and the RUM spectrum was initiated in \cite{gdph} as a means of understanding phase-transitions and structural stability in minerals. 
An operator-theoretic formulation of these notions was introduced by Owen and Power  in the context of periodic bar-joint frameworks in Euclidean space $\bR^d$ (\cite{owe-pow}).
In the above generalisation, characters $\chi$ in the dual group $\hat{\Gamma}$ can  be thought of as {\em wave vectors} in {\em reciprocal space}.
The $\chi$-symmetric vectors $z(\chi,a)$ which lie in the kernel of $C(G,\varphi)$ correspond to generalised {\em rigid unit modes} for the symmetric framework. 
\end{remark}

%%%%%%%%%%%%%%%%%%%%%%%%%%%%%%%%%%%%%%%%%%%%%%%%%%%%%%%%%%%%%%%%%%

\section{Examples from discrete geometry}
\label{s:examples}
In this section we present some contrasting examples of symmetric frameworks arising from systems of geometric constraints. In each case, the underlying geometric structure is provided by a simple undirected graph $G$, a normed linear space $X$ and an assignment $p:V\to X$ of points in $X$ to each vertex in $G$. We consider 1) Euclidean distance constraints for a bar-joint framework with screw axis symmetry, 2) a direction-length framework with both periodic and reflectional symmetry and 3) mixed-norm distance contraints for a finite bar-joint framework with symmetry group $C_{4h}$. Each vector in the kernel of the associated coboundary matrix $C(G,\varphi)$ represents an {\em infinitesimal} (or {\em first-order}) {\em flex} of the framework. We derive the symbol function $\Phi$, compute the RUM spectrum $\Omega(\G)$ and construct $\chi$-symmetric infinitesimal flexes (i.e.~generalised rigid unit modes) for these frameworks. 

\subsection{Bar-joint frameworks in $\bR^d$}
A  {\em bar-joint framework}  in $\bR^d$ is a pair $(G,p)$ consisting of a simple  undirected graph $G=(V,E)$ and a point $p=(p_v)_{v\in V}\in (\bR^d)^{V}$ with the property that $p_v\not=p_w$ whenever $vw\in E$.
For each pair $v,w\in V$, set $\varphi_{v,w}:\bC^d\to \bC$, $x\mapsto (p_v-p_w)\cdot x$ if $vw\in E$ and $\varphi_{v,w}=0$ otherwise.
Then the pair $(G,\varphi)$ is a framework (for the Hilbert spaces $\bC^d$ and $\bC$) in the sense of Section \ref{s:symbolfunctions}. 

%Note that the kernel of $C(G,\varphi)$ contains the subspace of vectors $u=(u_v)_{v\in V}$ with components satisfying,
%\[u_v = A(p_v)+c,\quad \forall\,v\in V,\]
%where $A\in M_d(\mathbb{R})$ is skew-symmetric and $c\in \mathbb{R}^d$. Such infinitesimal flexes are derived from a rigid motion of $\bR^d$ and regarded as {\em trivial}. 
%If every infinitesimal flex is trivial then $(G,p)$ is said to be {\em infinitesimally rigid}. Otherwise, $(G,p)$ is {\em infinitesimally flexible}. 

Expressing each linear map $\varphi_{v,w}$ as a row vector we obtain the   {\em rigidity matrix} $R(G,p)$ with rows indexed by $E$ and columns indexed by 
$V\times \{1,\ldots,d\}$. The row entries for a given edge $vw\in E$ are as follows,
\begin{eqnarray*}
\bordermatrix{ & &   & (v,1) & \cdots & (v,d) & & & & (w,1) & \cdots & (w,d) & &    \cr
vw & \cdots & 0 & p^1_v-p^1_w & \cdots & p^d_v-p^d_w & 0 & \cdots & 0 & p^1_w-p^1_v & \cdots & p^d_w-p^d_v & 0 & \cdots  \cr
}.\end{eqnarray*}

We begin with a small example.

\begin{example}
\label{ex:framework1}
Let $G=(V,E)$ be a four cycle with vertex set $V=\{v_1,v_2,v_3,v_4\}$ and edge set 
$E=\{v_1v_2,v_2v_3,v_3v_4,v_4v_1\}$.
Let $p=(p_v)_{v\in V}\in (\mathbb{R}^2)^V$ where,
\[p_{v_1}=(0,0),\quad p_{v_2} = (1,0),\quad p_{v_3} = (0,1),\quad p_{v_4}=(1,1).\]
The bar-joint framework $(G,p)$ is illustrated in Figure 
\ref{fig:framework} together with an accompanying rigidity matrix $R(G,p)$. 

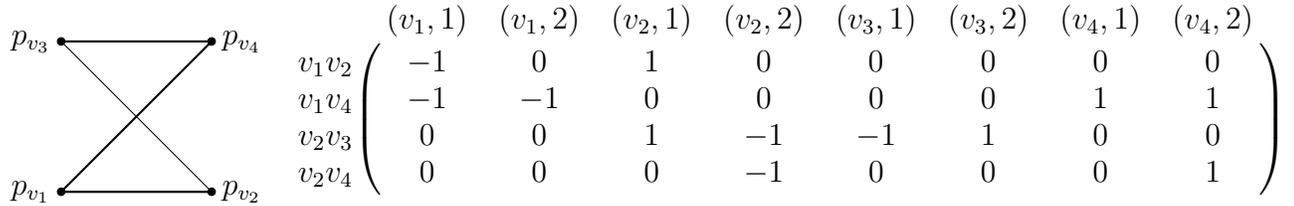
\begin{figure}[h!]
\centering
  \begin{tabular}{  c   }
  
\begin{minipage}{.3\textwidth}
\begin{tikzpicture}
\clip (-2,-0.4) rectangle (2.7cm,2.5cm);

\draw [thick](0,0)--(2,0);
\draw [thick](0,2)--(2,2);
\draw [fill] (0,2)--(2,0);
\draw [fill] (0,0) circle [radius=0.05];
\draw [fill] (2,0) circle [radius=0.05];
\draw [fill] (0,2) circle [radius=0.05];
\draw [fill] (2,2) circle [radius=0.05];
\draw [thick](0,0)--(2,2);
\draw [thick](0,0)--(2,0);
\node [left] at (0,0) {$p_{v_1}$};
\node [right] at (2,0) {$p_{v_2}$};
\node [left] at (0,2) {$p_{v_3}$};
\node [right] at (2,2) {$p_{v_4}$};
\end{tikzpicture}
\end{minipage}
  
\begin{minipage}{.7\textwidth}
\begin{eqnarray*}\bordermatrix{ 
& (v_1,1) & (v_1,2) & (v_2,1) & (v_2,2) & (v_3,1) & (v_3,2) & (v_4,1) & (v_4,2)\cr
v_1v_2  & -1 & 0 & 1 & 0 & 0 & 0 & 0 & 0 \cr
v_1v_4 & -1 & -1 & 0 & 0 & 0 & 0 & 1 & 1  \cr
v_2v_3 & 0 & 0 & 1 & -1 & -1 & 1 & 0 & 0  \cr
v_2v_4 & 0 & 0 & 0 & -1 & 0 & 0 & 0 & 1 \cr}\end{eqnarray*}
\vspace{4mm}
\end{minipage}
\end{tabular}
\caption{A bar-joint framework in $\mathbb{R}^2$ (left) and rigidity matrix (right).}\label{fig:framework2}
\end{figure}
%The kernel of $R(G,p)$ is a direct sum of a $3$-dimensional space spanned by the trivial infinitesimal flexes,
%\[(1,0,1,0,1,0,1,0),\quad (0,1,0,1,0,1,0,1),\quad (0,0,0,1,-1,0,-1,1), \]
%The non-trivial infinitesimal flexes of $(G,p)$ consist of the $1$-dimensional space spanned by $u=(u_v)_{v\in V}$ where $u_{v_1}=(0,-1)$, $u_{v_2}=u_{v_3}=(0,0)$ and $u_{v_4}=(-1,0)$. 

Let $\theta:\bZ_2\to\Aut(G)$ be the group homomorphism described in Example \ref{ex:symgraph}. Let $\tau:\bZ_2\to \Isom(\bR^2)$ be the group homomorphism for which $\tau(1)$ is the orthogonal reflection in the line $y=\frac{1}{2}$.
Then $\G=(G,\varphi,\theta,\tau)$ is a $\bZ_2$-symmetric framework.
%Let $[e_1]$ be the directed edge with gain $0$ and let $[e_2]$ be the directed edge with gain $1$ illustrated in Figure \ref{fig:symgraph}. 
%Note that the dual group for $\bZ_2$ consists of characters $\chi_0,\chi_1$ which satisfy $\chi_0(1)=1$ and $\chi_1(1)=-1$.
With the notation of Example \ref{ex:symframework}, the symbol function for $\G$ satisfies, 
\begin{eqnarray*}
\Phi(\chi_0) =\bordermatrix{ 
& ([v_1],1) & ([v_1],2) & ([v_2],1) & ([v_2],2)\cr
\left[e_1\right] & -1 & 0 & 1 & 0  \cr
\left[e_2\right] & -1 & -1 & 1 & -1 \cr},\end{eqnarray*}
\begin{eqnarray*}
\Phi(\chi_1)=\bordermatrix{ 
& ([v_1],1) & ([v_1],2) & ([v_2],1) & ([v_2],2)\cr
\left[e_1\right] & -1 & 0 & 1 & 0  \cr
\left[e_2\right] & -1 & -1 & -1 & 1  \cr}.\end{eqnarray*}
The multiplication operator $M_\Phi$ takes the form
\[M_\Phi:\bC^4\oplus \bC^4\to \bC^2\oplus \bC^2,\quad
\begin{bmatrix} x \\ y \end{bmatrix}\mapsto 
\begin{bmatrix} \Phi(\chi_0) & 0 \\ 0 & \Phi(\chi_1) \end{bmatrix}
\begin{bmatrix} x \\ y \end{bmatrix}.\]
In particular, we obtain the block diagonalisation of the  
 rigidity matrix $R(G,p)$ noted in Corollary \ref{c:block},
\[R(G,p)\sim \begin{bmatrix} \Phi(\chi_0) & 0 \\ 0 & \Phi(\chi_1) \end{bmatrix}.\]
Note that $\Omega(\G)=\{\chi_0,\chi_1\}$. The $\chi_0$-symmetric infinitesimal flexes derive from fully symmetric motions of the framework and take the form,
\[z_{v_1} = \left(\begin{smallmatrix} a \\ b \end{smallmatrix}\right), \quad
z_{v_2} =  \left(\begin{smallmatrix} a \\ -b \end{smallmatrix}\right), \quad
z_{v_3} = \left(\begin{smallmatrix} a \\ -b \end{smallmatrix}\right), \quad
z_{v_4} = \left(\begin{smallmatrix} a \\ b \end{smallmatrix}\right),\]
where $a,b\in \bC$.
The $\chi_1$-symmetric infinitesimal flexes take the form,
\[z_{v_1} = \left(\begin{smallmatrix} a \\ b \end{smallmatrix}\right), \quad
z_{v_2} =  \left(\begin{smallmatrix} a \\ -b \end{smallmatrix}\right), \quad
z_{v_3} = \left(\begin{smallmatrix} -a \\ b \end{smallmatrix}\right), \quad
z_{v_4} = \left(\begin{smallmatrix} -a \\ -b \end{smallmatrix}\right).\]
\end{example}

\begin{figure}[h!]
\centering
  \begin{tabular}{ c }
    \begin{minipage}{.3\textwidth}
\begin{tikzpicture}
\clip (-2,-2.5) rectangle (2.8cm, 4.5cm);

\draw [thick](-1,0)--(1,0);
\draw [thick](-1,1)--(1,1);
\draw [thick](-1,2)--(1,2);
\draw [thick](-1,3)--(1,3);
\draw [thick](-1,4)--(1,4);
\draw [thick](-1,-1)--(1,-1);
\draw [thick](-1,-2)--(1,-2);
\draw [thick](-1,-2.8)--(-1,4.8);
\draw [thick](1,-2.8)--(1,4.8);

\draw [fill] (1,0) circle [radius=0.05];
\draw [fill] (-1,0) circle [radius=0.05];
\draw [fill] (1,1) circle [radius=0.05];
\draw [fill] (-1,1) circle [radius=0.05];
\draw [fill] (1,2) circle [radius=0.05];
\draw [fill] (-1,2) circle [radius=0.05];
\draw [fill] (1,3) circle [radius=0.05];
\draw [fill] (-1,3) circle [radius=0.05];
\draw [fill] (1,4) circle [radius=0.05];
\draw [fill] (-1,4) circle [radius=0.05];
\draw [fill] (1,-1) circle [radius=0.05];
\draw [fill] (-1,-1) circle [radius=0.05];
\draw [fill] (1,-2) circle [radius=0.05];
\draw [fill] (-1,-2) circle [radius=0.05];

\node [right] at (1,0) {\small{$v_{1,0}$}};
\node [left] at (-1,0) {\small{$v_{0,0}$}};
\node [right] at (1,1) {\small{$v_{1,1}$}};
\node [left] at (-1,1) {\small{$v_{0,1}$}};
\node [right] at (1,2) {\small{$v_{1,2}$}};
\node [left] at (-1,2) {\small{$v_{0,2}$}};
\node [right] at (1,3) {\small{$v_{1,3}$}};
\node [left] at (-1,3) {\small{$v_{0,3}$}};
\node [right] at (1,4) {\small{$v_{1,4}$}};
\node [left] at (-1,4) {\small{$v_{0,4}$}};
\node [right] at (1,-1) {\small{$v_{1,-1}$}};
\node [left] at (-1,-1) {\small{$v_{0,-1}$}};
\node [right] at (1,-2) {\small{$v_{1,-2}$}};
\node [left] at (-1,-2) {\small{$v_{0,-2}$}};
\end{tikzpicture}
\end{minipage}
  
    \begin{minipage}{.3\textwidth}
\begin{tikzpicture}
\clip (-2.8,-2.78) rectangle (3.5cm, 4.5cm);

\draw [thick](-1,0,0)--(1,0,0);

\draw [red, thick](-0.707,1,-0.707)--(-1,0,0);
\draw [green, thick](0.707,1,0.707)--(1,0,0);
\draw [thick](0.707,1,0.707)--(-0.707,1,-0.707);
\draw [red, thick](0,2,-1)--(-0.707,1,-0.707);
\draw [green, thick](0,2,1)--(0.707,1,0.707);
\draw [thick](0,2,-1)--(0,2,1);
\draw [red, thick](0,2,-1)--(0.707,3,-0.707);
\draw [green, thick](0,2,1)--(-0.707,3,0.707);
\draw [thick](0.707,3,-0.707)--(-0.707,3,0.707);
\draw [red, thick](1,4,0)--(0.707,3,-0.707);
\draw [green, thick](-1,4,0)--(-0.707,3,0.707);
\draw [thick](1,4,0)--(-1,4,0);
\draw [red, thick](1,4,0)--(0.707,5,0.707);
\draw [green, thick](-1,4,0)--(-0.707,5,-0.707);

\draw [red, thick](-0.707,-1,-0.707)--(-1,0,0);
\draw [green, thick](0.707,-1,0.707)--(1,0,0);
\draw [thick](0.707,-1,0.707)--(-0.707,-1,-0.707);
\draw [green, thick](0,-2,1)--(0.707,-1,0.707);
\draw [red, thick](0,-2,-1)--(-0.707,-1,-0.707);
\draw [thick](0,-2,-1)--(0,-2,1);
\draw [red, thick](0,-2,-1)--(0.707,-3,-0.707);
\draw [green, thick](0,-2,1)--(-0.707,-3,0.707);

\draw [fill] (1,0,0) circle [radius=0.05];
\draw [fill] (-1,0,0) circle [radius=0.05];

\draw [thick,fill=white](0.707,1,0.707) circle [radius=0.05];
\draw [thick,fill=white](-0.707,1,-0.707) circle [radius=0.05];
\draw [thick,fill=white](0.707,-1,0.707) circle [radius=0.05];
\draw [thick,fill=white](-0.707,-1,-0.707) circle [radius=0.05];

\draw [thick,fill=white](0,2,1) circle [radius=0.05];
\draw [thick,fill=white](0,2,-1) circle [radius=0.05];
\draw [thick,fill=white](0,-2,1) circle [radius=0.05];
\draw [thick,fill=white](0,-2,-1) circle [radius=0.05];

\draw [thick,fill=white](-0.707,3,0.707) circle [radius=0.05];
\draw [thick,fill=white](0.707,3,-0.707) circle [radius=0.05];
\draw [thick,fill=white](-0.707,-3,0.707) circle [radius=0.05];
\draw [thick,fill=white](0.707,-3,-0.707) circle [radius=0.05];

\draw [thick,fill=white](1,4,0) circle [radius=0.05];
\draw [thick,fill=white](-1,4,0) circle [radius=0.05];

\node [below right] at (1,0,0) {\small{$p_{0,0}$}};
\node [above right] at (1,0,0) {\tiny{$(1,0,0)$}};
\node [below left] at (-1,0,0) {\small{$p_{1,0}$}};
\node [above left] at (-1,0,0) {\tiny{$(-1,0,0)$}};

\node [right] at (0.707,1,0.707) {\small{$p_{0,1}$}};
\node [above right] at (0.707,1.1,0.707) {\tiny{$(\frac{\sqrt{2}}{2},\frac{\sqrt{2}}{2},1)$}};
\node [left] at (-0.707,1,-0.707) {\small{$p_{1,1}$}};

\node [right] at (0.707,-1,0.707) {\small{$p_{0,-1}$}};
\node [left] at (-0.707,-1,-0.707) {\small{$p_{1,-1}$}};

\node [right] at (0,2,-1) {\small{$p_{(1,2)}$}};
\node [left] at (0,2,1) {\small{$p_{0,2}$}};

\node [below] at (0,0,0) {\small{$e_{1,0}$}};
\node [left] at (0.8,0.4,0) {\small{$e_{2,0}$}};
\node [right] at (-0.8,0.5,0) {\small{$e_{3,0}$}};
\end{tikzpicture}
\end{minipage}

    \begin{minipage}{.3\textwidth}
\begin{tikzpicture}
\clip (-2,-0.4) rectangle (3cm,2.4cm);

\draw [thick,->-] (0,0.8)--(2,0.8) node[midway,below] {\tiny{$0$}};
\draw [thick,->-] (0,0.8)to [out=45,in=0] node[right] {\tiny{$1$}}(0,1.5);
\draw [thick] (0,1.5)to [out=180,in=135](0,0.8);
\draw [thick,->-] (2,0.8)to [out=45,in=0] node[right] {\tiny{$1$}}(2,1.5);
\draw [thick] (2,1.5)to [out=180,in=135](2,0.8);
\draw [fill] (0,0.8) circle [radius=0.05];
\draw [fill] (2,0.8) circle [radius=0.05];
\node [left] at (0,0.8) {\small{$[v_{0,0}]$}};
\node [right] at (2,0.8) {\small{$[v_{1,0}]$}};
\end{tikzpicture}
\end{minipage}
\end{tabular}

\caption{The double helix framework $\G_{dh}$ (center), underlying graph (left) and gain graph (right).}\label{DH framework}
\end{figure}
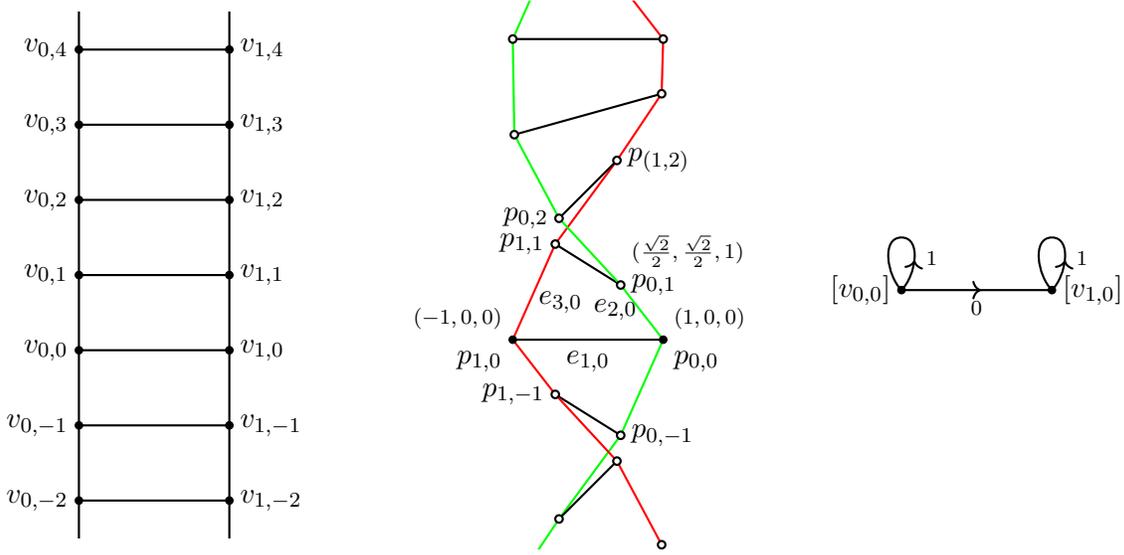

We now present our first main example.

\renewcommand{\arraystretch}{1.3}
\begin{example}{(Double helix framework)}
Consider the bar-joint framework $(G_{dh},p)$ in $\bR^3$, illustrated in Figure \ref{DH framework}. The  graph $G_{dh}$ has vertex set $V=\{v_{j,k}\,:\,j\in\{0,1\},k\in\bZ\}$ and edge set $E=\{e_{j,k}:\,j\in\{1,2,3\},k\in\bZ\}$ where  $e_{1,k}=v_{0,k}v_{1,k}$, $e_{2,k}=v_{0,k}v_{0,k+1}$ and $e_{3,k}=v_{1,k}v_{1,k+1}$. %v_{0,k}v_{1,k},v_{j,k}v_{j,k+1}
The placement $p:V\to \bR^3$ is defined by setting,
\[p_{j,k}:=p(v_{j,k})=\left( \begin{smallmatrix}(-1)^{j}\cos\left(\frac{k\pi}{4}\right)\\(-1)^{j}\sin\left(\frac{k\pi}{4}\right)\\k\end{smallmatrix}\right),\quad \forall\,j\in\{0,1\},\,k\in \bZ.\] 
Let $\theta:\bZ\rightarrow \operatorname{Aut}(G_{dh})$ be the group homomorphism with,
\[\theta(n)(v_{j,k})=v_{j,k+n},\quad
\forall\,j\in\{0,1\},\,k\in\bZ.\]
The quotient graph for the $\bZ$-symmetric graph $(G_{dh},\theta)$ is the multigraph $G_0=(V_0,E_0)$, where $V_0=\{[v_{0,0}], [v_{1,0}]\}$ is the set of vertex orbits and $E_0=\{[e_{1,0}],[e_{2,0}],[e_{3,0}]\}$ is the set of edge orbits.  
Choosing $v_{0,0}$ and $v_{1,0}$ as our vertex orbit representatives and fixing an orientation on the edges of $G_0$ we obtain a gain graph, such as the one shown in Figure \ref{DH framework}. 
Let $\tau:\bZ\rightarrow \operatorname{Isom}(\bR^3)$ be the group homomorphism which assigns to each $n\in\bZ$ the affine isometry $\tau(n)$ with linear part, 
\begin{equation*}
d\tau(n)=\left( \begin{smallmatrix}
\cos\left(\frac{n\pi}{4}\right) & -\sin\left(\frac{n\pi}{4}\right) & 0  \\
\sin\left(\frac{n\pi}{4}\right) & \cos\left(\frac{n\pi}{4}\right) & 0 \\ 
0 & 0 & 1
\end{smallmatrix}\right)
\end{equation*}
and translation vector $(0,0,n)\in\bR^3$. 
Note that, for each $n\in \bZ$, $\tau(n)$ is a screw rotation about the $z$-axis by the angle $\frac{\pi n}{4}$ and satisfies,
\[\tau(n)(p_{j,k})=p(\theta(n)(v_{j,k}))=p(v_{j,k+n})=p_{j,k+n},\quad \forall\,j\in\{0,1\},\,k\in\bZ.\]

Consider the $\bZ$-symmetric framework $\G_{dh}=(G_{dh},\varphi,\theta,\tau)$. 
To formulate the symbol function for $\G_{dh}$ we first compute,
\[p_{0,0}-p_{1,0}= \left(\begin{smallmatrix} 2 \\ 0 \\ 0  \end{smallmatrix}\right),\quad
p_{0,0}-p_{0,1}=\left(\begin{smallmatrix} 1-\frac{\sqrt{2}}{2} \\ -\frac{\sqrt{2}}{2}\\ -1  \end{smallmatrix}\right),\quad 
p_{1,0}-p_{1,1}=\left(\begin{smallmatrix} \frac{\sqrt{2}}{2}-1 \\ \frac{\sqrt{2}}{2}\\ -1  \end{smallmatrix}\right).\]
Recall that the dual group of $\mathbb{Z}$ consists of characters of the form 
$\chi_{\omega}:\mathbb{Z}\to \bT$, $k\mapsto \omega^k$, where $\omega\in \bT$. Thus, by Theorem \ref{t:symbol}, the symbol function $\Phi:\bT\rightarrow M_{3\times 6}(\bC)$ is given by,
\begin{eqnarray*}
\Phi(\omega) = 
\resizebox{0.85\hsize}{!}{
\bordermatrix{ &([v_{0,0}],1)&([v_{0,0}],2)&([v_{0,0}],3)&([v_{1,0}],1)& ([v_{1,0}],2) & ([v_{1,0}],3)\cr
([v_{0,0}],[v_{1,0}])&2 & 0 & 0 & -2 & 0 & 0  \cr  
([v_{0,0}],[v_{0,0}])&1-\frac{\sqrt{2}}{2}(1+\omega) & \omega-\frac{\sqrt{2}}{2}(1+\omega) & \omega-1 & 0 & 0 & 0\cr 
([v_{1,0}],[v_{1,0}])&0 & 0 & 0 & \frac{\sqrt{2}}{2}(1+\omega)-1 & \frac{\sqrt{2}}{2}(1+\omega)-\omega & \omega-1 \cr}
}\end{eqnarray*}

Note that $\Phi(\omega)$ has a 3-dimensional kernel for all $\omega\in \mathbb{T}$ and so $\Omega(\G_{dh})=\mathbb{T}$.

Calculating now the Fourier transform of $\Phi$, we obtain $\hat{\Phi}:\bZ\rightarrow M_{3\times 6}(\bC)$ where,
\[\hat{\Phi}(k) = \int_{\bT} \,\omega^{-k}\Phi(\omega)\,d\omega
= \left\{\begin{array}{ll} 
     {\tiny \begin{pmatrix} 2 & 0 & 0 & -2 & 0 & 0  \\ 
		1-\frac{\sqrt{2}}{2} & -\frac{\sqrt{2}}{2} & -1 & 0 & 0 & 0\\ 
0 & 0 & 0 & \frac{\sqrt{2}}{2}-1 & \frac{\sqrt{2}}{2} & -1 \end{pmatrix}}, & \text{if }k=0 \\
          {\tiny \begin{pmatrix} 0 & 0 & 0 & 0 & 0 & 0  \\ 
		-\frac{\sqrt{2}}{2} & 1-\frac{\sqrt{2}}{2} & 1 & 0 & 0 & 0\\ 
0 & 0 & 0 & \frac{\sqrt{2}}{2} & \frac{\sqrt{2}}{2}-1 & 1 \end{pmatrix}}, & \text{if }k=1 \\
       {\bf 0}_{3\times 6} , & \text{otherwise}. 
	\end{array}\right.\]
%Consider now the rows of the rigidity matrix $R(G_{dh},p)$ indexed by the edges 
%$\tilde{E}_0=\{e_{1,0},e_{2,0},e_{3,0}\}$,
%\[
%R_{\tilde{E}_0} :=
%\resizebox{0.9\hsize}{!}{$\kbordermatrix{ 
%&&(v_{0,0},1)&(v_{0,0},2)&(v_{0,0},3)&(v_{1,0},1)&(v_{1,0},2)&(v_{1,0},3)&(v_{0,1},1)&(v_{0,1},2)&(v_{0,1},3)&(v_{1,1},1)&(v_{1,1},2)&(v_{1,1},3) \\
%e_{1,0}&\cdots & 2 & 0 & 0 & -2 & 0 & 0 & 0 & 0 & 0 & 0 & 0 & 0 & \cdots \\
%e_{2,0}&\cdots & 1-\frac{\sqrt{2}}{2} & -\frac{\sqrt{2}}{2} & -1 & 0 & 0 & 0 &  
%\frac{\sqrt{2}}{2}-1 & \frac{\sqrt{2}}{2} & 1 & 0 & 0 & 0 & \cdots \\
%e_{3,0}&\cdots & 0 & 0 & 0 & \frac{\sqrt{2}}{2}-1 & \frac{\sqrt{2}}{2} & -1 &   
 %0 & 0 & 0 &1-\frac{\sqrt{2}}{2} & -\frac{\sqrt{2}}{2} & 1  & \cdots 
%}$}.\]
%Note that for every $g=(g_{v})_{v\in V}\in(\mathbb{C}^3)^V$ we have,
%\begin{eqnarray*}
%R_{\tilde{E}_0}g 
%&=& R_{\tilde{E}_0,v_{0,0}}(g_{v_{0,0}}) + R_{\tilde{E}_0,v_{1,0}}(g_{v_{1,0}}) +
%R_{\tilde{E}_0,v_{0,1}}(g_{v_{0,1}}) + R_{\tilde{E}_0,v_{1,1}}(g_{v_{1,1}}) \\
%&=& \hat{\Phi}(0)_{[v_{0,0}]} d\tau(0) g_{v_{0,0}}+ \hat{\Phi}(0)_{[v_{1,0}]}d\tau(0)g_{[v_{1,0}]}+
%\hat{\Phi}(1)_{[v_{0,0}]} d\tau(-1)g_{v_{0,1}}+\hat{\Phi}(1)_{[v_{1,0}]} d\tau(-1)g_{v_{1,1}}
%\end{eqnarray*}
Then $\Phi(\omega) = \hat{\Phi}(0) + \hat{\Phi}(1)\omega$, as expected by Corollary \ref{cor:FCoef}.

Given any $\omega\in\bT$, it is easily checked that the vector 
$a=(1,-1,1,1,-1,-1)^T$ lies in the kernel of $\Phi(\omega)$.
Thus, by Theorem \ref{t:twistedflex}, the function 
\[z(\chi_\omega,a):V\to \bC^3,\quad v_{j,k}\mapsto 
\omega^{k}\begin{pmatrix}
\cos(\frac{k\pi}{4})+\sin(\frac{k\pi}{4})\\ \sin(\frac{k\pi}{4})-\cos(\frac{k\pi}{4})\\ (-1)^j\end{pmatrix},
\quad j\in\{0,1\},\,k\in\bZ.\] 
is a $\chi_\omega$-symmetric infinitesimal flex of the double helix framework.
\end{example}

\subsection{Direction-length frameworks}
A {\em direction-length framework} in $\bR^d$ is a pair $(G,p)$ consisting of a simple  undirected graph $G=(V,E)$, a partition of the edge set $E$ into two subsets $D$ and $L$, and a point $p=(p_v)_{v\in V}\in (\bR^d)^{V}$ with the property that $p_v\not=p_w$ whenever $vw\in E$.
For each pair $v,w\in V$, set $\varphi_{v,w}:\bC^d\to \bC^{d-1}$ to be,
\begin{enumerate}[(i)]
\item a linear map with rank $d-1$ and kernel spanned by $p_v-p_w$, if $vw\in D$, 
\item the linear map $x\mapsto ((p_v-p_w)\cdot x) I_{d-1}$, if $vw\in L$, and,
\item $0$, if $vw\notin E$.
\end{enumerate}
Note that the pair $(G,\varphi)$ is a framework (for the Hilbert spaces $\bC^d$ and $\bC^{d-1}$) in the sense of Section \ref{s:symbolfunctions}. 
The edges in $D$ represent direction constraints and the edges in $L$ represent length constraints. Mixed constraint systems of this type arise  naturally in CAD and network localisation for example (see \cite{ser-whi, jac-jor}).

\begin{example}{(Diamond lattice framework)}
Consider the diamond lattice direction-length framework illustrated in Figure \ref{ZZ2 framework}. 
The graph $G_{dl}$ has vertex set $V=\{v_{n,j}\,:\,n\in\bZ, \,j\in\{0,1\}\}$ and edge set  $E=D\cup L$ where 
$D=\{v_{n,j}v_{n+1,j}:\,n\in\bZ,\, j\in\{0,1\}\}$
and $L = \{v_{n,0}v_{n+1,1},v_{n,0}v_{n-1,1}:\,n\in\bZ,\, j\in\{0,1\}\}$.
The placement $p$ of $G_{dl}$ in $\bR^2$ satisfies $p_{n,j}:=p(v_{n,j})=(n,(-1)^{j+1})$ for all $n\in \bZ$ and $j\in\bZ_2$.

Given $v,w\in V$, define $\varphi_{v,w}:\bC^2\to \bC$ by setting,
\begin{enumerate}[(i)]
\item $\varphi_{v,w}(x_1,x_2)= x_2$ if $vw\in D$ is an edge with $v=v_{n,0}$ and $w=v_{n+1,0}$, or,  $v=v_{n+1,1}$ and $w=v_{n,1}$,
\item $\varphi_{v,w}(x_1,x_2)=-x_2$ if $vw\in D$ is an edge with $v=v_{n,1}$ and $w=v_{n+1,1}$, or,  $v=v_{n+1,0}$ and $w=v_{n,0}$,
\item $\varphi_{v,w}(x)=(p_v-p_w)\cdot x$ if $vw\in L$, and, 
\item $\varphi_{v,w}=0$ if $vw\notin E$.
\end{enumerate}
Then $(G,\varphi)$ is a framework (for the Hilbert spaces $\bC^2$ and $\bC$) in the sense of Section \ref{s:symbolfunctions}. 

Define a group homomorphism $\theta:\bZ\times \bZ_2\rightarrow \operatorname{Aut}(G_{dl})$ with, 
\[\theta(m,j)(v_{n,k})=v_{m+n,j+k},\quad m,n\in\bZ,\, j,k\in\bZ_2.\]
 Then the pair $(G_{dl},\theta)$ is a $\bZ\times \bZ_2$-symmetric graph. The accompanying gain graph $G_0=(V_0,E_0)$ has vertex set $V_0=\{[v_{0,0}]\}$ and edge set $E_0=\{[e_{1,(0,0)}],[e_{2,(0,0)}]\}$, where $e_{1,(0,0)}=v_{0,0}v_{1,0}$ and $e_{2,(0,0)}=v_{0,0}v_{1,1}$.

\begin{figure}[h!]
\centering
  \begin{tabular}{ c }
    \begin{minipage}{0.8\textwidth}
\begin{tikzpicture}
\clip (-5,-1.5) rectangle (7cm, 4cm);

\draw [thick](-7,0)--(-0,0);
\draw [thick](2,0)--(7,0);
\draw [thick](-7,2)--(2,2);
\draw [thick](2,2)--(7,2);
\draw [thick](-7,1)--(-6,2)--(-4,0)--(-2,2)--(0,0);
\draw [thick](2,2)--(4,0)--(6,2)--(7,1);
\draw [thick](-7,1)--(-6,0)--(-4,2)--(-2,0)--(0,2)--(2,0)--(4,2)--(6,0)--(7,1);
\draw [thick,fill=white] (-6,0) circle [radius=0.05];
\draw [thick,fill=white] (-4,0) circle [radius=0.05];
\draw [thick,fill=white] (-2,0) circle [radius=0.05];
\draw [red, thick](0,0)--(2,2);
\draw [red, thick](0,0)--(2,0);
\draw [red, fill] (0,0) circle [radius=0.05];
\draw [red, thick,fill=white] (2,0) circle [radius=0.05];
\draw [thick,fill=white] (4,0) circle [radius=0.05];
\draw [thick,fill=white] (6,0) circle [radius=0.05];
\draw [thick,fill=white] (-6,2) circle [radius=0.05];
\draw [thick,fill=white] (-4,2) circle [radius=0.05];
\draw [thick,fill=white] (-2,2) circle [radius=0.05];
\draw [thick,fill=white] (0,2) circle [radius=0.05];
\draw [red, thick,fill=white] (2,2) circle [radius=0.05];
\draw [thick,fill=white] (4,2) circle [radius=0.05];
\draw [thick,fill=white] (6,2) circle [radius=0.05];
\node [above] at (0,0.2) {\tiny{$(0,-1)$}};
\node [above] at (2,0.2) {\tiny{$(1,-1)$}};
\node [below] at (2,1.9) {\tiny{$(1,1)$}};
\node [below] at (0,0) {$p_{0,0}$};
\node [below] at (2,0) {$p_{1,0}$};
\node [above] at (1,-0.2) {$e_{1,(0,0)}$};
\node [left] at (1,1) {$e_{2,(0,0)}$};
\node [above] at (2,2) {$p_{1,1}$};
\end{tikzpicture}
\end{minipage}

 \begin{minipage}{0.2\textwidth}
\begin{tikzpicture}
\clip (-1,-1) rectangle (2cm,2.4cm);

\draw [thick,->-] (0,0.8)to [out=45,in=0] node[right] {\tiny{$(1,0)$}}(0,1.5);
\draw [thick] (0,1.5)to [out=180,in=135](0,0.8);
\draw [thick,->-] (0,0.8)to [out=-45,in=0] node[right] {\tiny{$(1,1)$}}(0,0.1);
\draw [thick] (0,0.1)to [out=180,in=-135](0,0.8);
\draw [fill] (0,0.8) circle [radius=0.05];
\node [left] at (0,0.8) {\small{$[v_{0,0}]$}};
\end{tikzpicture}
\end{minipage}
\end{tabular}
\caption{The diamond lattice direction-length framework $\G_{dl}$ (left) and its gain graph (right).}\label{ZZ2 framework}
\end{figure}
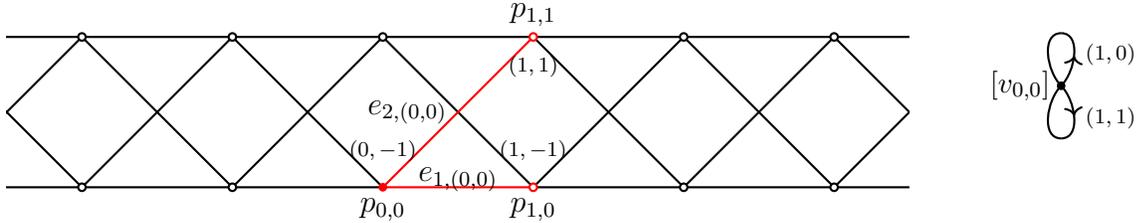

Define a group homomorphism $\tau:\bZ\times \bZ_2\to \Isom(\bR^2)$ with linear part,
\begin{equation*}
d\tau(m,j)=\left(\begin{smallmatrix} 
1 & 0  \\
0 & (-1)^j 
\end{smallmatrix}\right), \quad
m\in\bZ,\, j\in \bZ_2.
\end{equation*}
and translation vector $\left(\begin{smallmatrix}1 \\ 0 \end{smallmatrix}\right)$.
Note that $\theta$ and $\tau$ satisfy,
\[\varphi_{(m,j)v,(m,j)w} = \varphi_{v,w}\circ\tau(-m,-j),\quad \forall\,
m\in\bZ,\,j\in\bZ_2,\,v,w\in V.\]
Thus $\G_{dl}=(G_{dl},\varphi,\theta,\tau)$  is a $\bZ\times\bZ_2$-symmetric framework.

%To compute the symbol function for $\G_{dl}$ we first calculate, $p_{e_{1,(0,0)}}=p_{0,0}-p_{1,0}=\left(\begin{smallmatrix} -2 \\ 0  \end{smallmatrix}\right) $ and $p_{e_{2,(0,0)}}=p_{0,0}-p_{1,1}=\left( \begin{smallmatrix} -2 \\ -2  \end{smallmatrix}\right)$.
Recall that the dual group of $\bZ\times \bZ_2$ consists of characters of the form 
$\chi_{\omega,\iota}:\bZ\times \bZ_2\to \bT$, $(n,j)\mapsto \omega^n \iota^j$, where $\omega\in \bT$ and $\iota\in\hat{\bZ}_2=\{-1,1\}$.
Applying again Theorem \ref{t:symbol}, we obtain the symbol function,
\begin{eqnarray*}
\Phi(\omega,\iota)=
\bordermatrix{
&([v_{0,0}],1)& ([v_{0,0}],2) \cr
\left[e_{1,(0,0)}\right] &  0 & 1-\omega  \cr 
\left[e_{2,(0,0)}\right] & -1+\omega\iota & -2(1+\omega \iota) \cr}\end{eqnarray*}
where $\omega\in \bT$ and $\iota\in \hat{\bZ}_2$.
Note that $\Omega(\G_{dl})=\{(1,1),(1,-1),(-1,-1)\}$. 
We now apply Theorem \ref{t:twistedflex} to construct the associated $\chi$-symmetric infinitesimal flexes of $\G_{dl}$.

\begin{itemize}
\item Let $\omega=1$ and $\iota=1$. Check that $a:=\left(\begin{smallmatrix}  1\\0 \end{smallmatrix}\right) \in \operatorname{ker}\Phi(1,1)$. Hence we obtain a $\chi_{1,1}$-symmetric infinitesimal flex $z(\chi_{1,1},a) =(z_v)_{v\in V}$ where, 
\[z_{v_{m,j}}= d\tau(m,j)a
= \left(\begin{smallmatrix} 1\\ 0 \end{smallmatrix}\right),
\quad m\in \bZ,\,j\in\bZ_2.\] 
Note that this is a {\em trivial} infinitesimal flex of $\G_{dl}$ describing translation along the $x$-axis.

\item Let $\omega=1$ and $\iota=-1$. Check that $a:=\left(\begin{smallmatrix}  0\\1 \end{smallmatrix}\right)\in \operatorname{ker}\Phi(1,-1)$. Hence we obtain a $\chi_{1,-1}$-symmetric infinitesimal flex $z(\chi_{1,-1},a) =(z_v)_{v\in V}$ where, 
\[z_{v_{m,j}}= (-1)^{j} d\tau(m,j)a
=\left(\begin{smallmatrix} 0\\1 \end{smallmatrix}\right),
\quad m\in \bZ,\,j\in\bZ_2.\] 
Note that this is a trivial infinitesimal flex of $\G_{dl}$ describing translation along the $y$-axis.

\item Let $\omega=-1$ and $\iota=-1$. Check that $a:=\left(\begin{smallmatrix} 1\\0 \end{smallmatrix}\right) \in \operatorname{ker}\Phi(-1,-1)$. Hence we obtain a $\chi_{-1,-1}$-symmetric infinitesimal flex $z(\chi_{-1,-1},a) =(z_v)_{v\in V}$ where, 
\[z_{v_{m,j}}=(-1)^{m}(-1)^j d\tau(m,j)a 
= \left(\begin{smallmatrix}  (-1)^{m+j}\\ 0\end{smallmatrix}\right),
\quad m\in \bZ,\,j\in\bZ_2.\] 
Note that this is a non-trivial infinitesimal flex of $\G_{dl}$.
\end{itemize}
\end{example}

\subsection{Norm distance constraints}
Let $X$ be a finite dimensional real normed linear space with unit ball $B$.
There exists a unique ellipsoid in $X$ of minimal volume which contains $B$, known as the L\"{o}wner ellipsoid for $B$ (see \cite[p.~82]{thompson}). The L\"{o}wner ellipsoid is the unit ball for a norm  which is derived from an inner product on  $X$. Let $X'$ denote the real linear space $X$ together with this inner product and let $X'_\bC$ denote the complexification of this real Hilbert space.  

A  {\em bar-joint framework}  in $X$ is a pair $(G,p)$ consisting of a simple  undirected graph $G=(V,E)$ and a point $p=(p_v)_{v\in V}\in X^{V}$ with the property that $p_v-p_w$ is a non-zero smooth point in $X$ whenever $vw\in E$.
For each pair $v,w\in V$, set $\varphi_{v,w}:X\to \bR$ where, 
\begin{equation}\label{eqvarphigennorm}
\varphi_{v,w}(x) 
= \lim_{t\to 0} \frac{1}{t}\left(\|p_v-p_w+tx\|-\|p_v-p_w\|\right),
\end{equation}
if $vw\in E$ and $\varphi_{v,w}=0$ if $vw\notin E$.
Each linear map $\varphi_{v,w}$ extends in the natural way to a linear map from $X'_\bC$ to $\bC$.
Thus the pair $(G,\varphi)$ is a framework (for the Hilbert spaces $X'_\bC$ and $\bC$) in the sense of Section \ref{s:symbolfunctions}. 

Note that if $\theta:\Gamma\to \Aut(G)$ and $\tau:\Gamma\to \Isom(X)$ are group homomorphisms which satisfy $p_{\gamma v} = \tau(\gamma)p_v$, for all $v\in V$ and all $\gamma\in\Gamma$,
then it is straightforward to check that, 
\[\varphi_{\gamma v,\gamma w} = \varphi_{v,w}\circ \tau(-\gamma), \quad\forall\,v,w\in V,\,\gamma\in\Gamma.\]
The isometry group $\Isom(X)$ is a subgroup of $\Isom(X')$ 
(see \cite[Corollary 3.3.4]{thompson}) and each isometry of $X'$ has a natural extension to an isometry of $X'_\bC$. Thus, regarding $\tau$ as a homomorphism into $\Isom(X'_\bC)$, we see that $\G=(G, \varphi, \theta,\tau)$ is a $\Gamma$-symmetric framework in the sense of Section \ref{s:symbolfunctions}.

\begin{example}($\ell_{2,q}^3$ distance constraints)
Let $\ell_{2,q}^3$, where $q\in(1,\infty)$, denote the  vector space $\mathbb{R}^3$ equipped with the smooth mixed $(2,q)$-norm in $\bR^3$ given by, 
\[ \|(x,y,z)\|_{2,q} = ((x^2+y^2)^{\frac{q}{2}}+|z|^q)^{\frac{1}{q}}.\]
Infinitesimal rigidity for non-symmetric bar-joint frameworks in these spaces has recently been studied in \cite{ckks}. In particular, it is shown there that the Lowner ellipsoid for the unit ball in $\ell_{2,q}^3$ is the Euclidean unit ball in $\bR^3$. Thus the associated complex Hilbert space is $\bC^3$. 

Consider the box kite bar-joint framework in $\ell_{2,q}^3$, illustrated in Figure \ref{Z4Z2 framework}.
The underlying graph $G_{bk}$ has vertex set $V=\{v_{n,j}\,:\,n\in\bZ_4,\,j\in\bZ_2\,\}$ and edge set $E=\{v_{n,0}v_{n+1,1},\,v_{n,0}v_{n-1,1},\,v_{n,j}v_{n+1,j}:\,n\in\bZ_4,\, j\in\bZ_2\,\}$. 
The placement  $p:V\rightarrow\bR^3$ satisfies, for $j\in\{0,1\}$, 
\[p_{0,j}:=\resizebox{0.1\hsize}{!}{$\begin{pmatrix}-2\\-2\\(-1)^{j+1}\end{pmatrix}$},\quad 
p_{1,j}:=\resizebox{0.1\hsize}{!}{$\begin{pmatrix}2\\-2\\(-1)^{j+1}\end{pmatrix}$},\quad
p_{2,j}:=\resizebox{0.1\hsize}{!}{$\begin{pmatrix}2\\2\\(-1)^{j+1}\end{pmatrix}$},\quad 
p_{3,j}:=\resizebox{0.1\hsize}{!}{$\begin{pmatrix}-2\\2\\(-1)^{j+1}\end{pmatrix}$}.\]

%Recall now that the entries of the rigidity matrix for bar-joint frameworks in $\ell_{2,p}^3$ are defined as follows \cite{ckks}: Let $vw\in E$. Write $p_v-p_w=(x,y,z)$ and $d=(x^2+y^2)^{\frac{1}{2}}$. When $d\not=0$ then,
%\[\varphi_{v,w}(a,b,c) = (x,y,\frac{\sgn(z)|z|^{p-1}}{d^{p-2}})\cdot (a,b,c).\]
%When $d=0$ then,
%\[\varphi_{v,w}(a,b,c) = zc.\]
%The rigidity matrix has row entries of the row indexed by $vw$ are given by,
%\[ \kbordermatrix{
%& & & & (v,1) & (v,2) & (v,3) & & & & (w,1) & (w,2) & (w,3) & & & \\
%vw 
%& 0 & \cdots & 0 
%& x 
%& y
%& \frac{\sgn(z)|z|^{p-1}}{d^{p-2}} 
%& 0 & \cdots & 0
%& -x 
%& -y 
%& -\frac{\sgn(z)|z|^{p-1}}{d^{p-2}} 
%& 0 & \cdots & 0
%}.\]
%When $d=0$, then the entries of the row indexed by $vw$ are given by, 
%\[ \kbordermatrix{
%& & & & (v,1) & (v,2) & (v,3) & & & & (w,1) & (w,2) & (w,3) & & & \\
%vw 
%& 0 & \cdots & 0 
%& 0 
%& 0
%& z 
%& 0 & \cdots & 0
%& 0 
%& 0 
%& -z 
%& 0 & \cdots & 0
%}.\]
%
%Check that $p_{e_{1,(0,0)}}=p_{0,0}-p_{1,0}=\left(\begin{smallmatrix} -2 \\ 0 \\ 0  \end{smallmatrix}\right)$ and  
%$p_{e_{2,(0,0)}}=p_{0,0}-p_{1,1}=\left(\begin{smallmatrix} -2 \\ 0 \\ -2  \end{smallmatrix}\right)$
%and hence the respective entries for the coboundary matrix are $\phi_{e_{1,(0,0)}}=\left(\begin{smallmatrix} -2 & 0 & 0 \end{smallmatrix}\right)$ and $\phi_{e_{2,(0,0)}}=\left(\begin{smallmatrix} -2 & 0 & -2 \end{smallmatrix}\right)$.

\begin{figure}[h!]
\centering
  \begin{tabular}{ c }
	    \begin{minipage}{.4\textwidth}
\begin{tikzpicture}
\clip (-1,-1) rectangle (8cm, 5cm);

\draw [thick](2,0)--(6,0);
\draw [thick](0,2)--(6,2);
\draw [thick](2,2)--(4,0)--(6,2);
\draw [thick](0,2)--(2,0)--(4,2)--(6,0);
\draw [red, thick](0,0)--(2,2);
\draw [red, thick](0,0)--(2,0);
\draw [thick] (0,2)to [out=60,in=120](6.5,2);
\draw [thick] (6.5,2)to [out=-60,in=45](6,0);
\draw [thick] (6,2)to [out=120,in=60](-0.5,2);
\draw [thick] (-0.5,2)to [out=-120,in=135](0,0);
\draw [thick] (0,2)to [out=30,in=180](3,2.7);
\draw [thick] (3,2.7)to [out=0,in=150](6,2);
\draw [thick] (0,0)to [out=-30,in=180](3,-0.7);
\draw [thick] (3,-0.7)to [out=0,in=-150](6,0);

\draw [red, fill] (0,0) circle [radius=0.05];
\draw [red, thick,fill=white] (2,0) circle [radius=0.05];
\draw [red, thick,fill=white] (2,2) circle [radius=0.05];
\draw [thick,fill=white] (6,0) circle [radius=0.05];
\draw [thick,fill=white] (4,0) circle [radius=0.05];
\draw [thick,fill=white] (4,2) circle [radius=0.05];
\draw [thick,fill=white] (6,2) circle [radius=0.05];
\draw [thick,fill=white] (0,2) circle [radius=0.05];
\node [below] at (0,0) {\tiny{$v_{0,0}$}};
\node [below] at (2,0) {\tiny{$v_{1,0}$}};
\node [below] at (4,0) {\tiny{$v_{2,0}$}};
\node [below] at (6,0) {\tiny{$v_{3,0}$}};
\node [below left] at (0.2,2) {\tiny{$v_{0,1}$}};
\node [above] at (2,2) {\tiny{$v_{1,1}$}};
\node [above] at (4,2) {\tiny{$v_{2,1}$}};
\node [below right] at (5.8,2) {\tiny{$v_{3,1}$}};
\node [above,red] at (1,-0.1) {\tiny{$e_{1,(0,0)}$}};
\node [left,red] at (0.7,0.6) {\tiny{$e_{2,(0,0)}$}};
\end{tikzpicture}
\end{minipage}

 \begin{minipage}{.3\textwidth}
\begin{tikzpicture}
\clip (-3,-1.5) rectangle (5cm, 4cm);
\draw [thick](2.2,0,0)--(0.2,2.2,0);
\draw [thick](0.2,0,0)--(2.2,2.2,0);
\draw [thick](0.2,0,0)--(0,0,2);
\draw [thick](0,0,2)--(0.2,2.2,0);
\draw [thick](0.2,0,0)--(0,2.2,2);
\draw [thick](0.2,2.2,0)--(0,2.2,2);
\draw [thick](2.2,0,0)--(2,0,2);
\draw [thick](2,0,2)--(2.2,2.2,0);
\draw [thick](2.2,0,0)--(2,2.2,2);
\draw [thick](2.2,2.2,0)--(2,2.2,2);
\draw [thick](0.2,0,0)--(2.2,0,0);
\draw [thick](2,0,2)--(0,2.2,2);
\draw [thick](0.2,2.2,0)--(2.2,2.2,0);
\draw [thick](0,2.2,2)--(2,2.2,2);
\draw [red,thick](0,0,2)--(2,0,2);
\draw [red,thick](0,0,2)--(2,2.2,2);
\draw [red, fill] (0,0,2) circle [radius=0.05];
\draw [red, thick,fill=white] (2,0,2) circle [radius=0.05];
\draw [red, thick,fill=white] (2,2.2,2) circle [radius=0.05];
\draw [thick,fill=white] (0.2,2.2,0) circle [radius=0.05];
\draw [thick,fill=white] (2.2,0,0) circle [radius=0.05];
\draw [thick,fill=white] (0.2,0,0) circle [radius=0.05];
\draw [thick,fill=white] (2.2,2.2,0) circle [radius=0.05];
\draw [thick,fill=white] (0,2.2,2) circle [radius=0.05];
\node [above left] at (0.1,0,2.2) {\small{$p_{0,0}$}};
\node [below] at (0,0,2) {\tiny{$(-2,-2,-1)$}};
\node [below] at (2,0,2) {\small{$p_{1,0}$}};
\node [above, right] at (2.2,0.1,0) {\tiny{$(2,2,-1)$}};
\node [right] at (2.2,2.1,0) {\tiny{$(2,2,1)$}};
\node [right] at (2,0,2) {\tiny{$(2,-2,-1)$}};
%\node [below] at (0.2,0,0) {\small{$p_{3,0}$}};
\node [below] at (2.5,0.04,0) {\small{$p_{2,0}$}};
\node [above, red] at (1.1,0,2.2) {\tiny{$e_{1,(0,0)}$}};
\node [above, red] at (0.4,0.8,0) {\tiny{$e_{2,(0,0)}$}};
\node [above] at (2.2,2.15,0) {\small{$p_{2,1}$}};
\node [above] at (0,2.15,0) {\small{$p_{3,1}$}};
\end{tikzpicture}
\end{minipage}

 \begin{minipage}{.3\textwidth}
\begin{tikzpicture}
\clip (-3,-1) rectangle (4cm,2.4cm);

\draw [thick,->-] (0,0.8)to [out=45,in=0] node[right] {\tiny{$(1,0)$}}(0,1.5);
\draw [thick] (0,1.5)to [out=180,in=135](0,0.8);
\draw [thick,->-] (0,0.8)to [out=-45,in=0] node[right] {\tiny{$(1,1)$}}(0,0.1);
\draw [thick] (0,0.1)to [out=180,in=-135](0,0.8);
\draw [fill] (0,0.8) circle [radius=0.05];
\node [left] at (0,0.8) {\small{$[v_{0,0}]$}};
\end{tikzpicture}
\end{minipage}
\end{tabular}
\caption{The box kite bar-joint framework $\G_{bk}$ (center), underlying graph (left) and gain graph (right).}\label{Z4Z2 framework}
\end{figure}
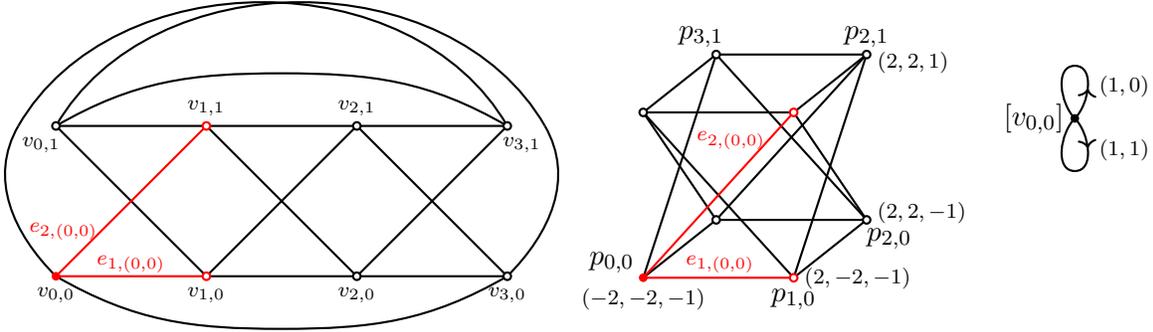

Define a group homomorphism $\theta:\bZ_4\times \bZ_2\rightarrow \operatorname{Aut}(G_{bk})$ with,
\[\theta(m,j)(v_{n,k})=v_{m+n,j+k},\quad \forall\, m,n\in\bZ_4,\,j,k\in\bZ_2.\]
 Then the pair $(G_{bk},\theta)$ is a $\bZ_4\times \bZ_2$-symmetric graph. The accompanying gain graph $G_0=(V_0,E_0)$ has vertex set $V_0=\{[v_{0,0}]\}$ and edge set $E_0=\{[e_{1,(0,0)}],[e_{2,(0,0)}]\}$, where $e_{1,(0,0)}=v_{0,0}v_{1,0}$ and $e_{2,(0,0)}=v_{0,0}v_{1,1}$. 

Define a group homomorphism $\tau:\bZ_4\times \bZ_2\rightarrow \Isom(\ell_{2,q}^3)$ with,
\begin{equation*}
\tau(m,j)=d\tau(m,j)=\left(\begin{smallmatrix}
\cos(m\pi/2) &-\sin(m\pi/2) & 0 \\
\sin(m\pi/2) & \cos(m\pi/2) & 0 \\ 0 & 0 & (-1)^j 
\end{smallmatrix}\right), 
\quad \forall\, m\in\bZ_4, \, j\in \bZ_2.
\end{equation*}
Note that,
\[p_{v_{m+n,j+k}} = \tau(m,j)p_{n,k},\quad \forall\, m,n\in\bZ_4, \, j,k\in \bZ_2.\]
Thus the tuple $\G_{bk}=(G_{bk}, \varphi, \theta,\tau)$ is a $\bZ_4\times \bZ_2$-symmetric framework (for the Hilbert spaces $(\ell_{2,q}^3)'_\bC$ and $\bC$).

Let now $vw\in E$. Write $p_v-p_w=(x,y,z)\in \ell_{2,q}^3$ and $d=\sqrt{x^2+y^2}$. Using the formula \eqref{eqvarphigennorm} we calculate directly,
\[\varphi_{v,w}(a,b,c)=(d^q+|z|^q)^{\frac{1}{q}-1}(d^{q-2}(xa+yb)+\operatorname{sgn}(z)|z|^{q-1}c),\quad \forall \,(a,b,c)\in \ell_{2,q}^3.\]
 Hence the functional $\varphi_{v,w}$ can be identified with the row vector
\[ \varphi_{v,w}=(d^q+|z|^q)^{\frac{1}{q}-1} d^{q-2}\begin{bmatrix} x & y & \frac{\operatorname{sgn}(z)|z|^{q-1}}{d^{q-2}}\end{bmatrix}.\]
The non-zero entries of the associated coboundary matrix are given by,
\[\varphi_{v_{0,0},v_{1,0}} = \begin{bmatrix} -1 & 0 & 0 \end{bmatrix},\quad
\varphi_{v_{0,0},v_{1,1}} = \alpha\begin{bmatrix} -2^{q-1} & 0 & -1\end{bmatrix},\]
\[\varphi_{v_{0,0},v_{3,0}} = \begin{bmatrix} 0 & -1 & 0\end{bmatrix},\quad
\varphi_{v_{0,0},v_{3,1}} = \alpha \begin{bmatrix} 0 & -2^{q-1} & -1\end{bmatrix},\]
where $\alpha=(2^q+1)^{\frac{1}{q}-1}$.

Recall that the dual group of $\bZ_4\times \bZ_2$ consists of characters of the form 
$\chi_{\eta,\iota}:\bZ_4\times \bZ_2\to \bT$, $(m,j)\mapsto \eta^m \iota^j$, where $\eta\in\hat{\bZ}_4=\{1,i,-1,-i\}$ and $\iota\in\hat{\bZ}_2=\{-1,1\}$.
By Theorem \ref{t:symbol}, the symbol function $\Phi:\hat{\bZ}_4 \times \hat{\bZ}_2\rightarrow M_{2\times 3}(\bC)$ of $\G_{bk}$ takes the form,
\begin{eqnarray*}\Phi(\eta,\iota)=
\bordermatrix{
&([v_{0,0}],1)& ([v_{0,0}],2) & ([v_{0,0}],3) \cr
\left[e_{1,(0,0)}\right] &  -1 & -\eta & 0 \cr 
\left[e_{2,(0,0)}\right] & -2^{q-1}\alpha & -2^{q-1}\alpha \eta\iota & 
-\alpha(1+\eta\iota) \cr}.\end{eqnarray*}
Evidently we have RUM spectrum $\Omega(\G_{bk})=\hat{\bZ}_4\times\hat{\bZ}_2$. 

First we will construct a $\chi_{1,1}$-symmetric infinitesimal flex of $\G_{bk}$. Note that such flexes represent a fully symmetric motion of the bar-joint framework which preserves the edge-lengths induced by the $(2,q)$-norm. 
The kernel of $\Phi(1,1)$ is spanned by $a=\left(\begin{smallmatrix} 1\\ -1 \\ 0 \end{smallmatrix}\right)$.
Thus, by Theorem \ref{t:twistedflex}, $z(\chi_{1,1},a)$ is a fully symmetric $\chi_{1,1}$-symmetric infinitesimal flex of $\G_{bk}$ where, for $j\in\bZ_2$,
\[z_{v_{0,j}} = \left(\begin{smallmatrix} 1\\ -1 \\ 0 \end{smallmatrix}\right),\quad z_{v_{1,j}}=\left(\begin{smallmatrix} 1\\ 1 \\ 0 \end{smallmatrix}\right),
\quad z_{v_{2,j}}=\left(\begin{smallmatrix} -1\\ 1 \\ 0 \end{smallmatrix}\right),
\quad z_{v_{3,j}}=\left(\begin{smallmatrix} -1\\ -1 \\ 0 \end{smallmatrix}\right).\]

Note that the above fully symmetric infinitesimal flex is independent of $q$. By way of constrast we now construct a $\chi_{-1,-1}$-symmetric infinitesimal flex for $\G_{bk}$ which varies with $q$.
Note that $\ker \Phi(-1,-1)$ is spanned by  $a=\left(\begin{smallmatrix} 1\\ 1 \\ -2^{q-1} \end{smallmatrix}\right)$. By Theorem \ref{t:twistedflex},  $z(\chi_{-1,-1},a)$ is a $\chi_{-1,-1}$-symmetric infinitesimal flex of $\G_{bk}$ where, for $j\in\bZ_2$,
\[z_{v_{0,j}} = \left(\begin{smallmatrix} 1\\ 1 \\ (-1)^{j+1}2^{q-1} \end{smallmatrix}\right),
\quad z_{v_{1,j}}=\left(\begin{smallmatrix} 1\\ -1 \\ (-1)^{j}2^{q-1} \end{smallmatrix}\right),
\quad z_{v_{2,j}}=\left(\begin{smallmatrix} -1\\ -1 \\ (-1)^{j+1}2^{q-1} \end{smallmatrix}\right),
\quad z_{v_{3,j}}=\left(\begin{smallmatrix} -1\\ 1 \\ (-1)^{j}2^{q-1} \end{smallmatrix}\right).\]
%\[z(\chi_{-1,1},a)(v_{m,j}):= (-1)^{m} d\tau(m,j) \left(\begin{smallmatrix} 1\\1\\0 \end{smallmatrix}\right)
%=(-1)^m \left(\begin{smallmatrix} \cos(m\pi/2)-\sin(m\pi/2)\\ \cos(m\pi/2)+\sin(m\pi/2)\\0\end{smallmatrix}\right)\\,\] 
%and
%\[z(\chi_{-1,1},b)(v_{m,j}):= (-1)^{m} d\tau(m,j)\left(\begin{smallmatrix} 0\\0\\1 \end{smallmatrix}\right)
%= \left(\begin{smallmatrix} 0\\ 0\\(-1)^{m+j}\end{smallmatrix}\right),\] 
%respectively, where $m\in\bZ_4$, and $j\in\{0,1\}$. 
\end{example}


\begin{thebibliography}{30}

\bibitem{asp} J.~Aspnes, T.~Eren, D.~K.~Goldenberg, A.~S.~Morse, W.~Whiteley, Y.~R.~Yang, B.~D.~O.~Anderson, P.~N.~Belhumeur. 2006. A Theory of Network Localization. IEEE Transactions on Mobile Computing 5, 12 (December 2006), 1663--1678.

\bibitem{bkp} G.~Badri, D.~Kitson, S.~C.~Power, {\em The almost periodic rigidity of crystallographic bar-joint frameworks}, Symmetry 6(2), (2014) 308--328.

\bibitem{ckks} J.~Cruickshank, E.~Kastis, D.~Kitson, B.~Schulze {\em Braced triangulations and rigidity}, in progress.

\bibitem{con} R.~Connelly, A.~I~Weiss, W.~Whiteley (eds), Rigidity and symmetry. Fields Institute Communications, 70. Springer, New York; Fields Institute for Research in Mathematical Sciences, Toronto, ON, 2014. 

\bibitem{dove} M.~T.~Dove, {\em Flexibility of network materials and the Rigid Unit Mode model: a personal perspective}, Phil. Trans. R. Soc. A.,
(2019) 37720180222.

\bibitem{dhh} M.~Dove, V.~Heine, K.~Hammonds,  Rigid unit modes in framework silicates. Mineralogical Magazine, 59(397), (1995) 629--639.

\bibitem{fol}
G. B. Folland, \emph{A course in abstract harmonic analysis}, CRC Press, Boca, Raton, Florida, 1995.

\bibitem{gas-cse} M.~Gáspár, P.~Csermely,
{\em Rigidity and flexibility of biological networks}, Briefings in Functional Genomics, 11(6), (2012), 443--456.

\bibitem{gdph} A.~P.~Giddy,  M.~T.~Dove,  G.~S.~Pawley,  V.~Heine, {\em The determination of rigid‐unit modes as potential soft modes for displacive phase transitions in framework crystal structures},
Acta Crystallographica Section A, 49(5) (1993), 697--703.

\bibitem{gue-fow-pow} S.D.~Guest, P.W.~Fowler, S.C.~Power, (eds) Rigidity of periodic and symmetric structures in nature and engineering, Philos. Trans. R. Soc. Lond. Ser. A Math. Phys. Eng. Sci.  \textbf{372}  (2014),  no. 2008.

\bibitem{jac-jor} B.~Jackson, T.~Jordan, {\em Graph theoretic techniques in the analysis of uniquely localizable sensor networks}, In: G.~Mao, B.~Fidan (eds), Localization Algorithms and Strategies for Wireless Sensor Networks, IGI Global, 2009, 146--173.

%\bibitem{jcglms} F.~Jagodzinski, P.~Clark, J.~Grant, T.~Liu, S.~Monastra, I.~Streinu, {\em Rigidity analysis of protein biological assemblies and periodic crystal structures}, 
%BMC Bioinformatics volume 14(S2) (2013). 

\bibitem{jkt}
T.~Jord\'{a}n, V.~Kaszanitzky and S.~Tanigawa, \emph{Gain-sparsity and symmetry-forced rigidity in the plane}, Discrete Comput.~Geom. \textbf{55} (2016), 314--372.

\bibitem{kas-kit-pow} E.~Kastis,  D.~Kitson amd S.~C.~Power, \emph{Coboundary operators for infinite frameworks}, Mathematical Proceedings of the Royal Irish Academy,  119A, 2, (2019), 93--110.

\bibitem{kbf} L.~Krick, M.~E.~Broucke, B.~A.~Fracis, {\em Stabilisation of infinitesimally rigid formations of multi-robot networks}, Journal International Journal of Control, 
82(3) (2009), 423--439.

\bibitem{max} J.C. Maxwell, \emph{On the calculation of the equilibrium and
stiffness of frames}, Philosophical Magazine 27 (1864), 294--299. Also:
Collected papers, XXVI. Cambridge University Press, 1890.

\bibitem{mur} G.~J.~Murphy, \emph{C$^\ast$-algebras and operator theory}, Academic Press Inc., Boston, 1990.

\bibitem{osb} M.~S.~Osborne, \emph{On the Schwartz-Bruhat space and the Paley - Wiener theorem for locally compact abelian groups}, Journal of Functional Analysis, {\bf 19} (1975), 40--49. 

\bibitem{owe-pow} J.~Owen, S.~C.~Power, {\em Infinite bar-joint frameworks, crystals and
operator theory}, New York J.~Math. 17 (2011) 445--490.

\bibitem{pow} S.~C.~Power, {\em Crystal frameworks, matrix-valued functions and rigidity operators},
In: Cepedello Boiso M., Hedenmalm H., Kaashoek M., Montes Rodríguez A., Treil S. (eds) Concrete Operators, Spectral Theory, Operators in Harmonic Analysis and Approximation. Operator Theory: Advances and Applications, vol 236. Birkhäuser, Basel, 2014.

\bibitem{pow-poly}
S.~C.~Power, {\em Polynomials for crystal frameworks and the rigid unit mode spectrum},
Phil.~Trans.~R.~Soc.~A. (2014) 37220120030. 

\bibitem{rud} W.~Rudin, \emph{Fourier analysis on groups}, Wiley - Interscience, New York, 1962. 

%\bibitem{rud2} W.~Rudin, \emph{Real and complex analysis}, McGraw-Hill, Inc.,  3rd. ed., New York, 1987.

%\bibitem{schulze} B.~Schulze, \em{Block-diagonalised rigidity matrices of symmetric frameworks and applications}, Contributions to Algebra and Geometry 51 (2010), 427--466.

\bibitem{schtan} B.~Schulze, S.~Tanigawa, \emph{Infinitesimal rigidity of symmetric bar-joint frameworks}, SIAM J. Discrete Math., 29(3), (2015) 1259--1286.

\bibitem{sch-whi} B.~Schulze, W.~Whiteley, {\em The orbit rigidity matrix of a symmetric framework.} Discrete Comput. Geom.~{\bf 46}, (2011) 561--598. 

\bibitem{ser-whi} B.~Servatius, W.~Whiteley, {\em Constraining plane configurations in CAD: Combinatorics of directions
and lengths.} SIAM J. Discrete Math. 12, (1999) 136--153.

\bibitem{thompson} A.~C.~Thompson, Minkowski Geometry,  Cambridge University Press (1996).

\end{thebibliography}
\end{document}